\documentclass{myclass}

\usepackage{comment}
\usepackage{mathabx,epsfig}
\usetikzlibrary{calc}
\usetikzlibrary{decorations.pathmorphing}
\usetikzlibrary{positioning}
\usetikzlibrary{shapes.multipart}

\synctex=1
\setcounter{tocdepth}{1}
\bibliography{bibliography}

%-------------theorems-------------------------------------------------------------------------------
\newcounter{thmCounter}					%don't count defs, propos and thms separately
\numberwithin{thmCounter}{section}		%count theorems with chapter number

%body in italics
\theoremstyle{plain}
\newtheorem{myprop}[thmCounter]{Proposition}
\newtheorem{mylem}[thmCounter]{Lemma}
\newtheorem{mythm}[thmCounter]{Theorem}
\newtheorem{mycor}[thmCounter]{Corollary}

%body font plain
\theoremstyle{definition}
\newtheorem{mydef}[thmCounter]{Definition}

%header in italics
\theoremstyle{remark}
\newtheorem{myrem}[thmCounter]{Remark}
\newtheorem{myexa}[thmCounter]{Example}

%proof environment included in amsthm package

%-------------macros--------------------------------------------------------------------------------

\defcals{HMDXORLGJTQ}
\deffraks{HR}
\defbbs{NRCPZQ}
\defops{dih,div,Fix,PGL,Ker}

\let\hat\relax
\newcommand{\hat}{\widehat}
\let\tilde\relax
\newcommand{\tilde}{\widetilde}

\let\Im\relax
\let\Re\relax
\DeclareMathOperator{\Im}{Im}				%override strange caligraphic I with operator for image of map
\DeclareMathOperator{\Re}{Re}				%override strange caligraphic R with operator for the real part
\DeclareMathOperator{\valence}{val}			%valence of vertex in graph
\DeclareMathOperator{\Div}{Div}				%group of divisors
\DeclareMathOperator{\Rat}{Rat}				%set of rational functions on tropical curve
\DeclareMathOperator{\Res}{Res}				%residue of a differential
\DeclareMathOperator{\tropDiv}{Div}			%moduli space of pairs: (tropical curve, divisor of deg = d)
\DeclareMathOperator{\supp}{supp}			%support of function/ divisor/ etc.
				%automorphism group
\DeclareMathOperator{\ord}{ord}				%order of rational function/ differential/ ...
\DeclareMathOperator{\Spec}{Spec}			%spectrum of a ring
\DeclareMathOperator{\valuation}{val}		%valuation of valued field
\DeclareMathOperator{\tgt}{tgt}				%target map of (tropical) Hurwitz space
\DeclareMathOperator{\src}{src}				%source map of (tropical) Hurwitz space

\newcommand{\an}{{\mathrm{an}}}				%superscript an to denote Berkovich analytification
\newcommand{\trop}{{\operatorname{trop}}}	%superscript in the name of tropical moduli space for example
\newcommand{\tropHodge}{\bP \Omega^k M_g^\trop}	%tropical k-Hodge bundle
\newcommand{\tropMSD}{\bP \Xi^k M_g^\trop}	%moduli space of tropical normalized covers
\newcommand{\classicDiv}{\mathcal{D}iv}		%moduli space of algebraic curves with divisor of fixed degree
\newcommand{\compact}[1]{\overline{#1}}		%bar over symbol to denote compactification	
\newcommand{\bs}{\mathbf{s}}                %bold s for multi-sections
\newcommand{\hor}{\mathrm{hor}}             %subscript hor for horizontal divisors
\newcommand{\id}{\mathrm{id}}               %the identity map
             %superscript for the admissible residue domain
\newcommand{\prim}{\mathrm{prim}}           %superscript for the primitive component of a stratum
\newcommand{\PD}{\mathrm{PD}}				%the poincary duality map
\newcommand{\dR}{\mathrm{dR}}				%subscript for de Rham cohomology
\newcommand{\red}{\mathrm{red}}				%subscript for reduced type
\newcommand\MSD[4][]{\Xi^{#1} \overline \cM_{{#2},{#3}}({#4})}
\newcommand\PMSD[4][]{\bP \Xi^{#1} \overline \cM_{{#2},{#3}}({#4})}

\newcommand{\acts}{\mathrel{\reflectbox{$\righttoleftarrow$}}}

% use uppercase letters for subfigure. By default lower case letters are used,
% but they are printet capitalized and thus look like uppercase letters.

\title{Realizability of tropical pluri-canonical divisors}
\date{}
\subjclass{14H15, 14H10, 32G15, 14T20}
\keywords{Tropical geometry, moduli spaces, Hodge bundle, twisted differentials, canonical divisors, Berkovich spaces}

\author{Felix Röhrle}
\address{Fachbereich Mathematik, Universit\"at T\"ubingen, Auf der Morgenstelle 10, 72076 T\"ubingen, Germany}
\email{\href{mailto:roehrle@math.uni-tuebingen.de}{roehrle@math.uni-tuebingen.de}}

\author{Johannes Schwab}
\address{Institut f\"ur Mathematik, Goethe Universit\"at Frankfurt, 60325 Frankfurt am Main, Germany}
\email{\href{mailto:schwab@math.uni-frankfurt.de}{schwab@math.uni-frankfurt.de}}

\begin{document}

	% !TEX root = realizability.tex

\begin{abstract}
	Consider a pair consisting of an abstract tropical curve and an effective divisor from the linear system associated to $k$ times the canonical divisor for $k \in \bZ_{\geq 1}$. In this article we give a purely combinatorial criterion to determine if such a pair arises as the tropicalization of a pair consisting of a smooth algebraic curve over a non-Archimedean field with algebraically closed residue field of characteristic~0 together with an effective pluri-canonical divisor. To do so, we introduce tropical normalized covers as special instances of cyclic tropical Hurwitz covers and reduce the realizability problem for pluri-canonical divisors to the realizability problem for normalized covers. Our main result generalizes the work of Möller-Ulirsch-Werner on realizability of tropical canonical divisors and incorporates the recent progress on compactifications of strata of $k$-differentials in the work of Bainbridge-Chen-Gendron-Grushevsky-Möller.
\end{abstract}

	\maketitle
	\tableofcontents

	% !TEX root = realizability.tex

%%%%%%%%%%%%%%%%%%%%%%%%%%%%%%%%%%%%%%%%%%%%%%%%%%%%%%%%%%%%%%%%%%%%%%%%%%%%%%%%%%%%%%%%%%%%%%%%%%%%%%%%
\section{Introduction}
\label{sec:introduction}
%%%%%%%%%%%%%%%%%%%%%%%%%%%%%%%%%%%%%%%%%%%%%%%%%%%%%%%%%%%%%%%%%%%%%%%%%%%%%%%%%%%%%%%%%%%%%%%%%%%%%%%%

The close analogy between Riemann surfaces and graphs was first described in \cite{Mik05}.
Since then many definitions in tropical geometry have been modeled with the aim that tropicalization of algebro-geometric objects produces the corresponding tropical objects. The realizability problem then asks whether a given instance of the tropical notion does indeed arise in this way. 
For example consider this question for curves with effective divisors. Given an abstract tropical curve $\Gamma$ (i.e. a vertex weighted metric graph) with an effective divisor $D$ (i.e. a nonnegative linear combination of points on $\Gamma$), the realizability problem asks if there exists a smooth proper algebraic curve $X$ with effective divisor $\tilde D$ of the same degree and rank as $D$ such that the tropicalization of $(X, \tilde D)$ is $(\Gamma, D)$ (see Section \ref{subsec:tropicalization} below for details on the tropicalization of curves with divisor). 
This question is very difficult in general. In fact, \cite{Cartwright} shows that it satisfies a version of Murphy's law that makes a general solution seem unlikely. In this article we restrict our attention to the special case of effective pluri-canonical divisors and give a complete characterization of those tropical objects that are realizable over an algebraically closed base field of characteristic~0.

\subsection{The tropical $k$-Hodge bundle}

Let $g \geq 2$ and $k \geq 1$ be integers. In algebraic geometry the $k$-Hodge bundle $\Omega^k \cM_{g}$ is a moduli space parametrizing pairs $(X, \eta)$ consisting of a smooth curve $X$ of genus $g$ and a $k$-differential $\eta$, i.e.\ a global section of the $k$-th tensor power of the canonical bundle on $X$. 
We start our exposition in Section \ref{sec:tropical} with a review of basic definitions for tropical curves, divisors, and linear equivalence. In Section \ref{subsec:tropical_k_Hodge_bundle} we then construct a tropical counterpart of the projectivized moduli space $\bP \Omega^k \cM_g$. More precisely, we prove:

\begin{mythm} \label{intro:thm:trop_k_Hodge_bundle}
	There exists a generalized cone complex in the sense of \cite[Section~2.6]{ACP} which parametrizes pairs $([\Gamma], D)$ of isomorphism classes of abstract tropical curves $\Gamma$ of genus $g$ and effective divisor $D \in \Div(\Gamma)$ linearly equivalent to $k$ times the canonical divisor $K_\Gamma$. We denote this space by $\tropHodge$ and call it the \emph{tropical $k$-Hodge bundle}. It is not equidimensional. The dimension of a maximal cone is $(3+2k)(g - 1)$.
\end{mythm}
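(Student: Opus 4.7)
My plan is to mimic the ACP construction of $M_g^\trop$ as a generalized cone complex, enriching each combinatorial type with divisor and ``slope witness'' data that encode the linear equivalence $D \sim k K_\Gamma$ via an explicit piecewise linear function $f$. Concretely, a combinatorial type will be a triple $\Theta = (G, \mathbf{m}, \bs)$ in which $G$ is a stable vertex-weighted graph of genus $g$, the datum $\mathbf{m}$ records the divisor (integer multiplicities at vertices together with, for each edge, an ordered list of abstract interior support points with positive multiplicities), and $\bs$ assigns an integer slope to each segment of each edge (segments being delimited by the interior support points) in such a way that the combinatorial divisor of slopes equals $\mathbf{m} - kK_G$ at every vertex and every interior break. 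To each $\Theta$ I associate a cone $\sigma_\Theta$ in $\bR_{\geq 0}^{E(G)} \times \prod_e \bR_{\geq 0}^{I_e}$ with coordinates the edge lengths $\ell_e$ and positions $t_{e,i}$ of interior support points; it is cut out by the ordering conditions $0 \leq t_{e,1} \leq \cdots \leq t_{e,I_e} \leq \ell_e$ and by the single-valuedness conditions $\oint_\gamma df = 0$ for $\gamma$ running over a basis of $H_1(G,\bZ)$. Each of these is a rational linear equation in the $\ell_e$ and $t_{e,i}$, so $\sigma_\Theta$ is a rational polyhedral cone.

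I would then glue the cones $\sigma_\Theta$ along face inclusions induced by specializations (edge contractions, sliding a support point onto a vertex, merging adjacent support points, together with the forced adjustments to $\bs$) and pass to the quotient by automorphisms of each type. In the relative interior of $\sigma_\Theta$ the edge lengths and support positions specify an honest pair $([\Gamma], D)$, and the slope data $\bs$ produces a PL function $f$ on $\Gamma$ with $\div(f) = D - kK_\Gamma$, certifying $D \sim kK_\Gamma$. Taking the colimit of the resulting diagram of rational cones in the sense of \cite[Section~2.6]{ACP} then yields the desired generalized cone complex $\tropHodge$.

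For the dimension of a maximal cone, the upper bound $\dim \sigma_\Theta \leq |E(G)| + \deg(D) \leq (3g-3) + k(2g-2) = (3+2k)(g-1)$ follows from the stability bound on $|E(G)|$. To realize equality I take for $G$ a ``trivalent tree with a leaf-loop'': start from a trivalent tree with $g$ leaves and $g-2$ genus-$0$ internal vertices, and attach a single loop at each leaf. The resulting $G$ is stable of genus $g$ with $|E(G)| = 3g-3$, and its cycle space is spanned by the $g$ leaf-loops. I place all $k(2g-2)$ support points of $D$ in the interiors of the tree edges of $G$ with multiplicity one; since no tree edge lies on any cycle, the slopes $\bs$ can be chosen to vanish on every loop edge, and the cycle single-valuedness conditions become automatic. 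Thus $\sigma_\Theta$ attains full dimension $|E(G)| + k(2g-2) = (3+2k)(g-1)$.

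The principal technical point, which I expect to be the main obstacle, is verifying that the cones $\sigma_\Theta$ really assemble into a generalized cone complex: every face of $\sigma_\Theta$ must arise from a canonical specialization of $\Theta$ together with a canonical adjustment of the slope data $\bs$, and the automorphism groups of types must act on the cones in the way \cite{ACP} requires. Non-equidimensionality of $\tropHodge$ is then immediate, as combinatorial types whose cycles are forced to carry nonzero slopes of $f$ incur strict linear constraints from single-valuedness and hence live on strictly lower-dimensional cones.
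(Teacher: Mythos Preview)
Your approach is essentially the same as the paper's, but packaged differently. The paper does not build the cone complex from scratch: it embeds $\tropHodge$ as a subcomplex of (a subdivision of) the already-constructed moduli space $\tropDiv_{g,d}^\trop$ of tropical curves with effective divisor of degree $d=k(2g-2)$ from \cite{MUW17_published}. In that ambient space the support of $D$ is absorbed into the vertex set of the minimal model $G$, so the only extra datum is a single initial slope $m_e\in\bZ$ per edge, subject to the vertex balancing condition $D(v)=k(2g(v)-2+\valence(v))+\sum m_e^{\text{out}}-\sum m_e^{\text{in}}$; continuity of $f$ then cuts a linear subspace of $\sigma_G$. The crucial finiteness of slope choices is supplied by \cite[Lemma~1.8]{GK08}. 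This bypasses exactly the gluing verification you flag as the main obstacle: the face relations and automorphism compatibility are inherited from $\tropDiv_{g,d}^\trop$, so nothing needs to be re-checked.

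Your direct construction with explicit interior support positions $t_{e,i}$ and segment slopes $\bs$ is equivalent after the change of coordinates that turns the $t_{e,i}$ into edge lengths of the subdivided graph, but you would still owe the finiteness of combinatorial types $\Theta$; this is not automatic and is precisely what \cite[Lemma~1.8]{GK08} provides (effectivity of $D$ bounds all slopes by $\deg D$). Your maximal-dimension example is the same as the paper's (Example~\ref{exa:maximal_cone_tropicalHodge}): a trivalent genus-zero graph whose cycle space is spanned by leaf-loops, with all chips pushed onto the bridge edges so that no cycle condition bites. The paper does not give a separate argument for non-equidimensionality beyond what you sketch.
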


\subsection{Tropicalizing $k$-differentials}
Tropicalization of curves with divisor has been described e.g.\ in \cite[Section~6.3]{BJ16} in the following way. Let $X$ be a smooth curve over a non-Archimedean field and let $D$ be an effective divisor on $X$. Let $\cX$ be the stable model of $X$. Define $\Gamma$ to be the dual graph of the nodal special fiber endowed with edge lengths obtained from the deformation parameters of the nodes. Furthermore, via the specialization map in the sense of Baker \cite[Section~2C]{Bak08} the divisor $D$ gives rise to a divisor on $\Gamma$. 
In \cite{MUW17_published} the authors gave a description of this procedure as a continuous map between moduli spaces. By restricting this general construction to effective pluri-canonical divisors we obtain a continuous tropicalization map $\trop_{\Omega^k} : \bP \Omega^k \cM_{g}^\an \to \tropHodge$ in Section \ref{subsec:tropicalization}. Throughout, $()^\an$ denotes analytification in the sense of \cite{Ber90}. The dimension of $\bP \Omega^k \cM_g$ is $(2+2k)(g-1)-1$ if $k > 1$ and $4g -4$ if $k = 1$ by \cite[Theorem~1.1]{BCGGM19}. Comparing this to the dimension of $\tropHodge$ obtained in Theorem~\ref{intro:thm:trop_k_Hodge_bundle} we see that $\trop_{\Omega^k}$ cannot be surjective. The realizability problem amounts to describing the image of $\trop_{\Omega^k}$, which is called the \emph{realizability locus}, as a subset of $\tropHodge$.

\subsection{Reduction to the case of normalized covers}
Our approach to the realizability problem crucially relies on the techniques developed in \cite{BCGGM19}. 
Recall that the authors of \cite{BCGGM19} canonically associate to any smooth curve with $k$-differential an admissible, normalized, cyclic, potentially ramified and disconnected cover $\pi : \hat X \to X$ with abelian differential $\omega$ on $\hat X$ and a deck transformation $\tau : \hat X \to \hat X$ such that $\omega^k = \pi^\ast \eta$ and $\tau^\ast \omega = \zeta \omega$ for a primitive $k$-th root of unity $\zeta$. 
The $k$-Hodge bundle admits a natural stratification by so-called \emph{types} $\mu = (m_1, \ldots, m_n) \in \bZ^n$ such that the sum of the $m_i$ is $k(2g-2)$.
The moduli space of multi-scale $k$-differentials $\bP \Xi^{k} \overline \cM_{g,n}{(\mu)}$ was introduced in \cite{CMZ19}. It is a compactification of the projectivized strata $\bP \Omega^k \cM_g(\mu)$ of the $k$-Hodge bundle and parametrizes normalized covers in its interior. In order to make use of this theory, we define a \emph{tropical normalized cover} in Definition~\ref{def:tropical_normalized_cover} to be a $k$-cyclic tropical Hurwitz cover $\hat \Gamma \to \Gamma$ such that the legs of $\hat\Gamma$ and $\Gamma$ encode a \mbox{(pluri-)}canonical divisor and additionally we require a deck transformation on $\hat \Gamma$ as well as compatibility conditions mimicking the above. Our notion of $k$-cyclic tropical Hurwitz cover is a special case of an unramified $\bZ/k\bZ$-cover in the sense of \cite{LUZ24} which in turn builds on the notion of tropical Hurwitz covers as defined in \cite{CMR16}.
Again we introduce a tropical moduli space in analogy to the algebro-geometric setting.

\begin{mythm} \label{intro:thm:tropMSD}
	There is a moduli space of tropical normalized covers, denoted $\tropMSD$, together with a natural forgetful map $\tropMSD \to \tropHodge$. It carries the structure of a generalized cone complex. The dimension of a maximal cone is $(3 + 2k)(g-1)$. Furthermore, there is a well-defined, continuous, closed, and proper tropicalization map $\trop_{\Xi^k} : \bP \Omega^{k} \cM_{g}{(1, \ldots, 1)}^\an \to \tropMSD$ such that the following diagram commutes
	\begin{equation} \label{eq:reduction_triangle}
		\begin{tikzcd}[column sep = large]
			\bP \Omega^{k} \cM_{g}{(1, \ldots, 1)}^\an \arrow[r, "\trop_{\Xi^k}"] \arrow[dr, "\trop_{\Omega^k}"'] & \tropMSD \arrow[d] \\
			& \tropHodge \ .
		\end{tikzcd}
	\end{equation} 
\end{mythm}

The forgetful map to $\tropHodge$ is not injective but finite. This means that a tropical normalized cover contains strictly more information than a pair $(\Gamma, D)$ of a tropical curve with effective pluri-canonical divisor, but given $(\Gamma, D)$ there are only finitely many tropical normalized covers above it. Moreover, finding these covers can in principal be automated and carried out on a computer. 
In this sense the following corollary effectively reduces our original realizability problem to the realizability of tropical normalized covers (see Corollary~\ref{cor:reduction_to_covers} for the precise statement).
It follows directly from the commutative triangle in \eqref{eq:reduction_triangle}.
\begin{mycor} \label{intro:cor:reduction}
	A tropical curve $\Gamma$ with effective pluri-canonical divisor $D = kK_\Gamma + (f)$ is realizable if and only if there exists a realizable tropical normalized cover $\pi : \hat{\Gamma} \to \Gamma$ such that the legs of $\Gamma$ encode $D$.
\end{mycor}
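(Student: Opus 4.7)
The strategy is to rephrase the corollary as an equality of images in $\tropHodge$. Let $\phi \colon \tropMSD \to \tropHodge$ denote the natural forgetful map that remembers only the base tropical curve together with the divisor encoded by its legs, and let $\iota \colon \bP \Omega^k \cM_g(1, \ldots, 1)^\an \to \bP \Omega^k \cM_g^\an$ be the map induced by the inclusion of the principal stratum (after forgetting the ordering of the zeros). Tracing through the definitions one obtains a commutative square with rows $\trop_{\Xi^k}$, $\trop_{\Omega^k}$ and columns $\iota$, $\phi$. Since realizability of $(\Gamma, D)$ (resp.\ of a tropical normalized cover $\pi$) is by definition membership in $\Im(\trop_{\Omega^k})$ (resp.\ in $\Im(\trop_{\Xi^k})$), the corollary reduces to the equality $\Im(\trop_{\Omega^k}) = \phi(\Im(\trop_{\Xi^k}))$. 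The inclusion $\phi(\Im(\trop_{\Xi^k})) \subseteq \Im(\trop_{\Omega^k})$ is immediate from the commutative square and gives the ``if'' direction: any preimage of a realizable $\pi$ under $\trop_{\Xi^k}$ yields a smooth curve with a $k$-differential whose pluri-canonical divisor realizes $(\Gamma, D) = \phi(\pi)$.

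For the reverse inclusion one combines density with closedness. The principal stratum $\bP \Omega^k \cM_g(1, \ldots, 1)$ is dense in the $k$-Hodge bundle $\bP \Omega^k \cM_g$ because a generic $k$-differential has only simple zeros, and this density is inherited by Berkovich analytifications. Together with continuity of $\trop_{\Omega^k}$ this forces $\Im(\trop_{\Omega^k}) \subseteq \overline{\phi(\Im(\trop_{\Xi^k}))}$. On the other hand, Theorem \ref{intro:thm:tropMSD} asserts that $\trop_{\Xi^k}$ is closed, so $\Im(\trop_{\Xi^k})$ is closed in $\tropMSD$; combined with the good topological behavior of the forgetful morphism $\phi$ between generalized cone complexes, this makes $\phi(\Im(\trop_{\Xi^k}))$ already closed in $\tropHodge$. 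Removing the closure then yields the required equality.

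The main obstacle in carrying out this plan lies in making the commutative square genuinely commute: one must verify that the tropicalization of the pluri-canonical divisor $\tilde D = (\eta)$ via $\trop_{\Omega^k}$ agrees with the divisor read off from the legs of the tropical normalized cover produced by $\trop_{\Xi^k}$. This compatibility requires tracing the relation $\omega^k = \pi^\ast \eta$, together with the $\tau$-equivariance of the abelian differential, through both tropicalization constructions and their interaction with the specialization map of Baker. Once this is in place, the density and closedness inputs make the corollary a direct topological consequence of Theorem \ref{intro:thm:tropMSD}.
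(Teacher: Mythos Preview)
Your approach is essentially the same as the paper's. The paper's proof is a single sentence: ``This is simply the triangle in Diagram~\eqref{eq:trop_Xi_commutes_source_target} commuting.'' That triangle is precisely your commutative square (with $\phi = \tgt$), established in Lemma~\ref{lem:trop_N}; the commutativity you flag as the ``main obstacle'' is exactly what that lemma verifies.

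Where you go beyond the paper is in the density/closedness argument for the ``only if'' direction. The paper's one-liner treats the arrow labeled $\trop_{\Omega^k}$ in the diagram as having domain the principal stratum, so that equality of images is immediate; your observation that the realizability locus was originally defined via the \emph{full} Hodge bundle is a legitimate point, and the density argument you sketch is a reasonable way to bridge this. However, your claim that $\phi(\Im(\trop_{\Xi^k}))$ is closed is not yet justified: invoking ``good topological behavior of $\phi$'' is not a proof. One way to make this precise is to note that $\tgt$ restricts on each cone of $\tropMSD$ to an isomorphism onto a cone of $\tropHodge$ (both are parametrized by edge lengths on the base graph; see the proof of Theorem~\ref{intro:thm:tropMSD}) and has finite fibers, which makes it proper and hence closed between these generalized cone complexes.
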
  

\subsection{Realizability of tropical normalized covers}
In Section \ref{sec:realizability_locus} we solve the realizability problem for tropical normalized covers $\pi : \hat\Gamma \to \Gamma$. Via Corollary~\ref{intro:cor:reduction} this may be viewed as an auxiliary theorem towards a solution of our original realizability problem. However, since the notion of tropical normalized covers provides a better (but still inadequate) approximation of the boundary combinatorics of the multi-scale $k$-differential compactification of the algebraic $k$-Hodge bundle, we believe that the following results are of independent interest as well. 

To solve the realizability problem for tropical normalized covers we use similar ideas as in \cite{MUW17_published}. This means that we proceed in two steps.
\begin{enumerate}
	\item For every vertex $v$ in $\Gamma$ we realize $\pi|_{\pi^{-1}(\{v\})}$ with a normalized cover of smooth curves with meromorphic differentials. 
	\item We glue these parts to obtain a normalized cover of nodal curves which lies in the boundary of $\bP \Xi^{k} \overline \cM_{g,n}{(1, \ldots, 1)}$ and smoothen these curves. 
\end{enumerate}
Observe at this point that for a tropical curve with pluri-canonical divisor $D = kK_\Gamma + (f)$ the zero and pole orders of any of the realizations in step~(1) are already determined. More precisely, the rational function $f$ gives rise to a canonical \emph{enhanced level graph structure} on $\Gamma$ (see Definition \ref{def:enhanced_level_graph} and Lemma \ref{lem:construction_of_k-enhancement} for details). Consequently, we only need to specify ($k$-)residues to proceed. Both steps from above impose restrictions on the possible choices. For step (1) these are given by \cite{GT21}, \cite{GT_quadratic} and \cite{GT_kdiff} and lead us to the notions of \emph{illegal vertex} (such a vertex is never realizable) and \emph{inconvenient vertex} (here special care in choosing residues has to be taken). Step (2) is only feasible if the \emph{global residue condition} (see Definition~\ref{def:GRC}) as well as the above mentioned compatibilities with $\pi$ and $\tau$ are respected. We will ensure this by assigning residues along $\tau$-orbits of simple closed cycles in $\hat \Gamma$. In contrast to the case $k = 1$ that was treated in \cite{MUW17_published} not any cycle is sufficient for this purpose. Rather we have to ask for each inconvenient vertex for a corresponding \emph{admissible cycle} (Definition \ref{def:effective_cycle}) or an \emph{independent pair of cycles} (Definition~\ref{def:independent_pair}). Having introduced the necessary notation we state our main result in Theorem \ref{thm:main_theorem} which roughly says the following.

\begin{mythm} \label{intro:thm:main_theorem}
	Fix an algebraically closed base field of characteristic~0.
	Let $g \geq 2$ and fix an integer $k \geq 1$. Let $\pi : \hat \Gamma \to \Gamma$ be a tropical normalized cover and $D = kK_\Gamma + (f)$ be an effective pluri-canonical divisor on $\Gamma$. The pair is realizable if and only if the following conditions hold.
	\begin{enumerate}[(i)]
		\item There is no illegal vertex in $\pi$.
		\item For every edge $\hat e$ in $\hat \Gamma$ for which $f \circ \pi$ is constant there is an effective cycle in $\hat \Gamma$ through $\hat e$.
		\item For every inconvenient vertex $v$ in $\Gamma$ there is an admissible cycle in $\hat \Gamma$ through one of the preimages $\hat v$ or there is an independent pair of cycles.
	\end{enumerate}
\end{mythm}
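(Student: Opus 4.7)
The plan is to first apply Corollary~\ref{intro:cor:reduction} so that the realizability of $(\Gamma,D)$ reduces to the realizability of the normalized cover $\pi$. I would then prove each direction separately, with the sufficiency part following the two-step strategy sketched in the introduction: first realize $\pi$ pointwise over each vertex of $\Gamma$ by meromorphic $k$-differentials with compatible residues, and then glue and smooth the local pieces inside $\PMSD[k]{g}{n}{1,\ldots,1}$.

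For necessity, I would start from an algebraic realization $\hat X \to X$ equipped with the abelian differential $\omega$ whose $k$-th power descends to the given pluri-canonical divisor. Restricting $\omega$ to each component of the nodal special fiber of the stable model of $\hat X$ yields a meromorphic abelian differential on a smooth curve whose pole and zero orders are encoded in the enhanced level graph. The obstruction theorems of Gendron--Tahar from~\cite{GT20,GT21,GT_quadratic} then force~(i) and dictate the special residues required at inconvenient vertices, from which~(iii) follows. Condition~(ii) comes from the global residue condition of~\cite{BCGGM19}: for a $\tau$-orbit of nodes lying over an edge $\hat e$ where $f\circ\pi$ is constant, the differential is holomorphic on one side of each node and so has vanishing residue on the other, and compatibility of this vanishing under plumbing forces $\hat e$ to lie on an effective cycle.

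For sufficiency, I would carry out step~(1) by constructing, vertex by vertex, smooth curves $X_v$ with meromorphic $k$-differentials $\eta_v$ whose pole and zero orders at the marked points are determined by the enhanced level graph from Lemma~\ref{lem:construction_of_k-enhancement}; the canonical construction then promotes these to normalized covers $(\hat X_v,\hat\omega_v)\to(X_v,\eta_v)$ carrying the deck transformation $\tau$. The existence of suitable $\eta_v$ is guaranteed by the Gendron--Tahar theorems in the absence of illegal vertices, with special residues at inconvenient vertices supplied by~(iii) and the corresponding residues at $f\circ\pi$-constant edges supplied by~(ii). The crucial point of this step is that the admissible cycles and independent pairs from~(iii) must allow a globally consistent, $\tau$-equivariant assignment of residues along $\tau$-orbits of simple closed cycles in $\hat\Gamma$.

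Step~(2) is then to glue the pieces into a nodal object in the boundary of $\PMSD[k]{g}{n}{1,\ldots,1}$ and smooth using the multi-scale compactification of~\cite{CMZ19,BCGGM19}. The smoothing succeeds precisely when the global residue condition of Definition~\ref{def:GRC} and the $\tau$-compatibility are satisfied, both of which are arranged by the residue assignment from step~(1). The resulting deformation produces a family whose tropicalization under $\trop_{\Xi^k}$ recovers the given cover $\pi$. The main obstacle, and the place where conditions~(ii) and~(iii) are essential, is the simultaneous satisfaction of three competing constraints on residues: $\tau$-equivariance, the pointwise Gendron--Tahar prescriptions at inconvenient vertices, and the global residue condition on every $\tau$-orbit of cycles. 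Unlike in the case $k=1$ of~\cite{MUW17_published}, an arbitrary cycle through an inconvenient vertex no longer suffices, and verifying that the effective, admissible, and independent-pair cycles always provide enough flexibility is where most of the technical work will live.
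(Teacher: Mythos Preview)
Your sufficiency sketch is broadly aligned with the paper's approach, though you omit a step that the paper needs: reducing to tropical curves with rational (hence, after rescaling, integer) edge lengths, via density in $\tropMSD$ together with closedness of the locus cut out by (i)--(iii) and of the image of $\trop_{\Xi^k}$. Without this reduction it is not clear that the plumbing construction from Theorem~\ref{thm:BCGGM_main_theorem} produces a family whose tropicalization has the \emph{prescribed} edge lengths rather than some rescaling of them.

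The necessity direction has a genuine gap and a factual error. First, your explanation of (ii) is backwards: an edge on which $f\circ\pi$ is constant is a horizontal edge, and on the abelian cover $\hat\Gamma^+$ both half-edges of a horizontal edge carry $o$-value $-1$, so the limiting differential has a \emph{simple pole} on each side with \emph{nonzero} residue; nothing is holomorphic there. The relevant input is exactly that this residue is nonzero, not that it vanishes.

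Second, and more seriously, you do not explain how to pass from ``the residues at the nodes of the special fiber lie in the image of the residue map'' (which is automatic, since the differential exists) to ``there is a simple cycle $\gamma$ in $\hat\Gamma^+$ with $R^{k_v}_\gamma(v)$ having the right properties.'' This is the heart of the necessity proof and it does not follow from Gendron--Tahar alone. The paper's mechanism is to use Poincar\'e duality on the smooth fibers $\hat X_t$: setting $\gamma_t := \PD(\omega_t)$ and using $\tau^\ast\omega_t = \zeta\omega_t$ gives $k\gamma_t = \sum_{i=0}^{k-1}\zeta^i\tau^i_\ast\gamma_t$. Expanding $\gamma_t$ in a basis of $H_1(\hat X_t;\bZ)$ chosen so that part of the basis maps onto a basis of simple cycles in $H_1(\hat G;\bZ)$, and pairing against vanishing cycles $\alpha_t^{(\iota)}$ around the nodes, one obtains the identity $\sum_j c^{(j)} R_{\beta_j^\trop}(\hat v)_{\hat h_\iota} = k\, r_{\hat q_\iota}$. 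This expresses the actual residues as a linear combination of the $R$-vectors of \emph{graph cycles}, and from nonvanishing of the appropriate $r_{\hat q_\iota}$ (for horizontal edges and type~I inconvenient vertices) or from the residue tuple lying in the image of $\Res^{k_v}$ (for type~II), one extracts the required effective/admissible cycle or independent pair. Your outline has no analogue of this step; ``compatibility under plumbing'' and ``the obstruction theorems dictate residues'' do not by themselves produce cycles in the dual graph.
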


In Figure~\ref{fig:strategy} we give an overview over the key objects in this article and their relation to each other. The diagram illustrates how Theorem~\ref{intro:thm:main_theorem} fills in the missing link in lifting a tropical normalized cover along $\trop_{\Xi^k}$ (right hand column of Figure~\ref{fig:strategy}).
Via Corollary~\ref{intro:cor:reduction}, Theorem~\ref{intro:thm:main_theorem} also provides a complete description of the locus of realizable curves in $\tropHodge$. 
Figure~\ref{fig:strategy} may also be understood as a compendium of the notation which we use throughout the article. 

\begin{figure}
	\centering
	\tikzset{
	mynode/.style={rectangle, draw, on grid, font = \small}, 
}
\def\arrowoffset{8}
\newcommand{\myindent}{\phantom{$\bullet$} \quad}
\begin{tikzpicture}[node distance=4.5 and 7.3, every text node part/.style={align=left}]
	\node[mynode](kdiff) {\textbf{$k$-differential} $(X, \eta) \in \bP \Omega^k \cM_g(\mu)$ \\
		consisting of \\
		$\bullet$ \quad smooth curve $X$ of genus $g$ \\
		$\bullet$ \quad section of $k$-th tensor power \\
		\myindent of canonical bundle $\eta$ \\
		\myindent with zero/pole orders $\mu = (m_1, \ldots, m_n)$};
	\node[mynode](mskdiff)[right=of kdiff] {Def. \ref{def:normalized_cover}: \textbf{normalized cover of} \\
		\textbf{$k$-differential} $\big[ \tau \acts \hat X \xrightarrow{\pi} X \big]$ with \\ 
		$\bullet$ \quad smooth curves $\hat X$ and $X$ \\
		$\bullet$ \quad ramified cover $\pi$ of degree $k$ \\
		$\bullet$ \quad deck transformation $\tau$ with $\tau^k = \id$ \\
		$\bullet$ \quad $k$-differential $\eta$ on $X$ \\ 
		$\bullet$ \quad abelian differential $\omega$ on $\hat X$ s.t. \\
		\myindent $\pi^\ast \eta = \omega^{\otimes k}$ and $\tau^\ast \omega = \zeta_k \omega$};
	
	\node[mynode](twistedkdiff)[below=of kdiff] {Def. \ref{def:twisted_k_differential}: \textbf{twisted $k$-differential} \\
		$\big(X, \{\eta_v\}_v \big)\in \bP \Omega^k \overline{\cM}_g(\mu)$ consisting of \\
		$\bullet$ \quad nodal curve $X = \bigcup X_v$ \\
		$\bullet$ \quad $k$-differential for each irreducible \\
		\myindent component $\{\eta_v\}_v$ \\
		$\bullet$ \quad level function $\ell : V \to \bZ_{\leq 0}$};
	\node[mynode](twistedmskdiff)[below=of mskdiff] {Def. \ref{def:multi-scale_k-diff}: \textbf{multi-scale $k$-differential} \\
		$\big[ \tau \acts \hat X \xrightarrow{\pi} X \big] \in \bP \Xi^k \overline{\cM}_g(\mu)$ with \\
		$\bullet$ \quad nodal curves $\hat X$ and $X$ \\
		$\bullet$ \quad twisted $k$-differential $\{\eta_v\}_v$ on $X$ \\
		$\bullet$ \quad twisted differential $\{\omega_v\}_v$ on $\hat X$ \\
		$\bullet$ \quad level function $\ell : V \to \bZ_{\leq 0}$
	};
	
	\node[mynode](enhancedlg)[below = of twistedkdiff] {Def. \ref{def:enhanced_level_graph}: \textbf{$k$-enhanced level graph $G^+$} \\
		consisting of \\
		$\bullet$ \quad underlying graph $G = (V, H, L)$ \\ 
		$\bullet$ \quad level function $\ell : V \to \bZ_{\leq 0}$ \\ 
		$\bullet$ \quad $k$-enhancement $o : H \cup L \to \bZ$};
	
	\node[mynode](covenhancedlg)[below = of twistedmskdiff] {Def. \ref{def:normalized_cover_enhanced_level_graph}: \textbf{normalized cover of enhanced} \\ 
		\textbf{level graphs} $\tau \acts \hat G^+ \xrightarrow{\pi} G^+$ with \\
		$\bullet$ \quad enhanced level graph \\
		\myindent $\hat G^+ = (\hat G, \hat \ell = \ell \circ \pi, \hat o)$\\
		$\bullet$ \quad $k$-enhanced level graph $G^+ = (G, \ell, o)$};
	
	\node[mynode](tropcurve)[below = of enhancedlg] {Def. \ref{def:tropical_k-Hodge_bundle}: \textbf{tropical $k$-canonical} \\
		\textbf{divisor $(\Gamma, D) \in \tropHodge$} consisting of \\
		$\bullet$ \quad Tropical curve $\Gamma = (G, g, l)$ with \\
		\myindent vertex genus function $g : V(G) \to \bN$, \\
		\myindent edge-length function $l : E(G) \to \bR_{>0}$ \\
		$\bullet$ \quad effective $k$-canonical divisor  \\
		\myindent $D = kK_\Gamma + (f)$ with rational function $f$};
	\node[mynode](covtropcurve)[below = of covenhancedlg] {Def. \ref{def:tropical_normalized_cover}: \textbf{normalized cover of tropical} \\
		\textbf{curves }$\big[\tau \acts \hat\Gamma \xrightarrow{\pi} \Gamma \big] \in \tropMSD$ with \\
		$\bullet$ \quad unramified $k$-cyclic tropical Hurwitz \\
		\myindent cover $\pi$ \\
		$\bullet$ \quad deck transformation $\tau$ with $\tau^k = \id$ \\
		$\bullet$ \quad effective $k$-canonical divisor \\
		\myindent $D = kK_\Gamma + (f)$ marked by the legs of $\Gamma$ \\
		$\bullet$ \quad effective canonical divisor \\
		\myindent $F = K_{\hat \Gamma} + \big(\frac{f \circ \pi}{k}\big)$ marked by the legs of $\hat \Gamma$};

	\draw[->] (mskdiff) -- (kdiff) node[midway, above] {$\cong$};
	\draw[->] (twistedmskdiff) -- (twistedkdiff) node[midway, above] {};
	\draw[->] (covenhancedlg) -- (enhancedlg) node[midway, above] {};
	\draw[->] (covtropcurve) -- (tropcurve) node[midway, above] {};
	
	\begin{scope}[transform canvas={xshift=-\arrowoffset}]
		\draw[->] (kdiff) -- (twistedkdiff) node[midway, left] {degenerate};	
		\draw[->] (mskdiff) -- (twistedmskdiff) node[midway, left] {degenerate};
		\draw[->] (twistedmskdiff) -- (covenhancedlg) node[midway, left] {combinatorial data};
	\end{scope}
	
	\draw[->] (twistedkdiff) -- (enhancedlg) node[midway, right] {combinatorial data};		
	\draw[->] (tropcurve) -- (enhancedlg) node[midway, right] {forget edge-lengths \\ and use Lem. \ref{lem:construction_of_k-enhancement}};
	\draw[->] (covtropcurve) -- (covenhancedlg) node[midway, right] {forget edge-lengths};
	
	\begin{scope}[transform canvas={xshift=\arrowoffset}]
		\draw[->] (twistedkdiff) -- (kdiff) node[midway, right] {smoothen};	
		\draw[->] (twistedmskdiff) -- (mskdiff) node[midway, right] {smoothen};	
		\draw[->, dashed] (covenhancedlg) -- (twistedmskdiff) node[midway, right] {Thm. \ref{intro:thm:main_theorem}};
	\end{scope}
	
	\draw [->, rounded corners] (kdiff.west) -- +(-0.3,0) |- node[pos=0.25, right] {$\trop_{\Omega^k}$ \\ Sec. \ref{subsec:tropicalization}} (tropcurve);
	\draw [->, rounded corners] (mskdiff.east) -- +(0.7,0) |- node[pos=0.25, left] {$\trop_{\Xi^k}$\!\!\!} (covtropcurve);
	
\end{tikzpicture}	
	\caption{Overview of the notions and notation used in this article. Horizontal arrows forget the cover. The commutative triangle in \eqref{eq:reduction_triangle} can be seen on the outside of this diagram.}
	\label{fig:strategy}
\end{figure}

We conclude Section~\ref{sec:realizability_locus} with a result in analogy to \cite[Theorem 6.6]{MUW17_published}.

\begin{mythm} \label{thm:dimension_realizability_locus}
	For $k \geq 2$, the realizability locus admits the structure of a generalized cone complex, all of whose maximal cones have dimension $(2+2k)(g-1)-1$.
	The fiber in the realizability locus over a maximal-dimensional cone in $M_g^\trop$ is a generalized cone complex, all whose maximal cones have relative dimension $(2k-1)(g-1)$.
\end{mythm}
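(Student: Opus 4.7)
The plan is to endow the realizability locus with a cone complex structure by combining the combinatorial criterion of Theorem~\ref{intro:thm:main_theorem} with the cone structure on $\tropMSD$, and then to compute the maximal cone dimensions via the tropicalization map $\trop_{\Xi^k}$ of Theorem~\ref{intro:thm:tropMSD}. By Corollary~\ref{intro:cor:reduction} together with Theorem~\ref{intro:thm:main_theorem}, a pair $([\Gamma], D)$ is realizable precisely when there exists a tropical normalized cover $\pi : \hat\Gamma \to \Gamma$ satisfying conditions (i)--(iii). These conditions depend only on the combinatorial type of $\pi$ together with its enhanced level structure, and they are preserved under passage to closed faces of cones in $\tropMSD$. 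Hence the preimage of the realizability locus in $\tropMSD$ is a union of closed cones and inherits the structure of a generalized cone complex, which descends along the forgetful map $\tropMSD \to \tropHodge$ (possibly after a subdivision to resolve identifications).

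For the dimension of the maximal cones, I would use that $\trop_{\Xi^k} : \bP \Omega^k \cM_g(1,\ldots,1)^\an \to \tropMSD$ is continuous and proper and, via the reduction above, surjects onto the realizability locus. Since tropicalization is dimension-preserving on the generic points of the algebraic strata of the source, the maximal cones in the realizability locus have dimension bounded by $\dim \bP \Omega^k \cM_g(1,\ldots,1) = (2+2k)(g-1)-1$. To see that this bound is achieved, I would exhibit a maximal cone concretely by fixing a trivalent stable $\Gamma$, a compatible enhanced level structure, and a normalized cover satisfying (i)--(iii), and then count the $3g-3$ edge-length parameters of $\Gamma$ together with the freedom in the residue and position data on $\hat\Gamma$.

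Finally, for the fiber over a maximal cone in $M_g^\trop$, the count matches the algebraic fiber of the projection $\bP \Omega^k \cM_g(1,\ldots,1) \to \cM_g$: by Riemann--Roch, for $k \geq 2$ a smooth curve of genus $g$ carries a $(2k-1)(g-1)$-dimensional space of $k$-differentials, and the tropical data over a fixed trivalent $\Gamma$---the positions of the legs of $\hat\Gamma$, the level values of $f$, and the cycle residues constrained by the global residue condition---should match this count. The main obstacle will be to verify concretely that these parameters are linearly independent on a generic combinatorial type: conditions (ii) and (iii) of Theorem~\ref{intro:thm:main_theorem} must be shown to hold on open dense subsets of suitable maximal cones, and one must check that no additional tropical relations (coming from the interaction of the deck transformation $\tau$ with the admissible and independent cycles) collapse the parameter space below the expected dimension.
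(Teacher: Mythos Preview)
Your approach handles the upper bound correctly and the cone-complex structure plausibly, but there is a genuine gap in the equidimensionality claim. The theorem asserts that \emph{all} maximal cones of the realizability locus have dimension $(2+2k)(g-1)-1$. Your argument gives the upper bound and then proposes to exhibit one explicit cone attaining it; this shows the realizability locus has the correct dimension, but it does not exclude the existence of a maximal cone of strictly smaller dimension that is not a face of any larger realizable cone. A combinatorial parameter count on a single graph cannot rule this out.

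The paper closes this gap with an argument of a different nature. Given any realizable tropical normalized cover $\pi$, it corresponds to a nonempty boundary stratum $D_\pi \subseteq \PMSD[k]{g}{n}{1,\ldots,1}$ whose codimension equals (via Lemma~\ref{lem:dimension_spaned_cone}) the dimension of the tropical cone. If this codimension is not maximal, some level $L$ of the projection $\sigma_L(D_\pi)$ has positive projective dimension; since $\sigma_L(\overline{D_\pi})$ is complete but projectivized strata of $k$-differentials contain no complete subvarieties by \cite{Gen20}, the image must hit the boundary, producing a further degeneration of $\pi$. Iterating yields a realizable cover whose cone has the maximal dimension and contains $\pi$ in its closure. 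This is a geometric argument on the algebraic side (completeness versus affineness of strata), not a combinatorial verification of conditions (i)--(iii), and it is precisely the step your outline is missing. Your fiber computation over $M_g^\trop$ inherits the same problem: you need equidimensionality of the total realizability locus before subtracting off the base dimension.
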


In Section~\ref{sec:example} we illustrate the language that we developed throughout Section~\ref{sec:realizability_locus} by applying our theory to give a complete description of the realizability locus over the dumbbell graph for $k = 2$.

\begin{myrem}
	\begin{enumerate}[(i)]
	\item For the general study of the realizability of curves with divisors as in \cite{Cartwright} it is crucial to ask for realizations by divisors \emph{of the same rank}, i.e.~Baker's specialization inequality \cite[Corollary~2.11]{Bak08} should be an equality. Without this condition every effective divisor on a tropical curve of genus $\geq 2$ would be realizable, simply because tropicalization of curves with divisors is surjective onto the tropical moduli space by \cite[Theorem~3.2]{MUW17_published}. For pluri-canonical divisors, this rank condition is always implicitly included, simply because the (tropical) rank of a (tropical) pluri-canonical divisor is always equal to $(2k - 1)(g - 1)$ by the (tropical) Riemann-Roch theorem (see \cite{GK08} for the tropical Riemann-Roch theorem).
	
	\item For $k = 1$ our Theorem~\ref{intro:thm:trop_k_Hodge_bundle} contains \cite[Theorem 4.3 (i) and (ii)]{LU17} as special case. Furthermore, every tropical normalized cover with $k = 1$ is necessarily the identity and the conditions from Theorem \ref{intro:thm:main_theorem} reduce to the conditions of \cite[Theorem~6.3]{MUW17_published} (see Remark \ref{rem:MUW_recovered} for details). Hence we recover the results of \cite{MUW17_published}.
	
	\item Our construction of $\tropHodge$ is a straight-forward generalization of the tropical Hodge bundle introduced in \cite{LU17}. In fact, Theorem \ref{intro:thm:trop_k_Hodge_bundle} could have been proved with the same ideas as in \cite{LU17}.
	
	\item The techniques involved in the proof of Theorem \ref{intro:thm:main_theorem} give a very similar criterion to decide which boundary strata of the moduli space of multi-scale $k$-differentials $\PMSD[k]{g}{n}{\mu}$ are nonempty, see Appendix~\ref{app:generalized_strata}.
	
	\item In Theorem \ref{intro:thm:main_theorem} we are concerned with finding realizations in the principal stratum $\mu = (1, \ldots, 1)$. A slight modification of the ideas from the proof can be used to give a criterion for realizability in any other stratum as well, see Remark~\ref{rem:realization_in_different_strata}. 
	
	\item The reason for reducing the realizability problem to the seemingly more complicated question for normalized covers is subtle. On the classical side, a $k$-differential is called \emph{primitive} if it is not a power of some $k'$ differential with $k' < k$ and $k'$ dividing $k$. This property is entirely invisible on the tropical side, i.e.\ when realizing a tropical curve consisting of a single vertex we may choose to realize it with a primitive or non-primitive differential. This choice has to be fixed in order to proceed and corresponds precisely to choosing a normalized cover.
\end{enumerate}
\end{myrem}

\subsection{Related work and future applications}

Very little is known about the topology of the projectivized strata $\bP \Omega^k \cM_{g} (\mu)$ of the $k$-Hodge bundle. We believe that our criterion will be useful for further research in this direction.

In the recent and much-celebrated work \cite{CGP21} the authors computed the top weight cohomology of $\cM_{g}$ from the reduced rational cohomology of the link around the cone point of $M_{g}^\trop$. The same technique was applied to the top weight cohomology of the moduli space of abelian varieties $\mathcal{A}_g$ in \cite{BBCMMW21} shortly after. In both cases it is vital to identify the tropical moduli space with the boundary complex of the classical moduli space (see e.g.~\cite{ACP} for the case of curves). With our description of the realizability locus in $\tropHodge$ we take the first step towards a similar computation of the top weight cohomology of strata of $k$-differentials. More precisely, we expect there to be a map from the dual boundary complex of the multi-scale differential space to the link around the cone point of our realizability locus. This map cannot be expected to be injective; for this we would have to include more combinatorial data into the definition of tropical normalized covers (e.g. an analogue of the so-called \emph{prong matching} which was introduced in \cite{BCGGM} or the data of a \emph{dilation flow} as discussed in \cite{MHRSY24}). However, we are optimistic that a suitable enrichment of our definition will enable a similar application of the techniques of \cite{CGP21} to the multi-scale differential space.

\medskip

We would like to highlight some work related to this article.
Amini-Baker-Brugall\'{e}-Rabinoff \cite{ABBR_I, ABBR_II} study the realizability problem for finite harmonic morphisms of tropical curves. Without the extra data of a pluri-canonical divisor, the global obstruction to realizability induced by the global $k$-residue condition from \cite{BCGGM19} does not occur. Indeed, \cite[Corollary~1.6]{ABBR_I} shows that the only obstructions occur locally at the vertices. Furthermore, adding the data of an effective divisor to the problem, the condition on the rank of the realization being equal to the rank of the tropical divisor is a non-trivial condition, see \cite[Section~5]{ABBR_II}.
	
By \cite[Theorem~1.1]{CJP15} every effective divisor class on a chain of loops is realizable by an effective divisor of the same rank. This is not a contradiction to our findings in Section~\ref{subsec:example_dumbbell_graph} because in this article we consider the much harder problem of realizability of divisors rather than divisor classes.
	
In some sense, Baker-Nicaise \cite{BN16} use a different framework to discuss tropicalizations of $k$-differentials. More precisely, they associate to any pluri-canonical form on a curve $X$ a so-called \emph{weight function} on the Berkovich analytification $X^\an$. This is related to our divisor-based point of view since the induced divisor of a weight function is again a pluri-canonical divisor by \cite[Corollary~3.2.5]{BN16}.

\subsection*{Acknowledgements.}
We are grateful to our advisors M.~Möller and M.~Ulirsch for their support during the preparation of this article and for their detailed and helpful feedback on an earlier draft.
We thank Q.~Gendron for helpful conversations on his joint work with G.~Tahar and in particular for sharing the content of the back then forthcoming \cite{GT_quadratic}.
We are also grateful to D. Maclagan for a discussion on the topic of this article during the 2021 edition of the conference ``Effective Methods in Algebraic Geometry'' which inspired Example~\ref{subsec:kK_Gamma_realizable}. 
Finally, we thank the anonymous referee for pointing out a flaw in the proof of Proposition~\ref{prop:maximal_cone} in an earlier version of this text.
The authors acknowledge support by the LOEWE-Schwerpunkt ``Uniformisierte Strukturen in Arithmetik und Geometrie'', by the Deutsche Forschungsgemeinschaft (DFG, German Research Foundation) - project number 456557832 and the Collaborative Research Centre TRR 326 \textit{Geometry and Arithmetic of Uniformized Structures}, project number 444845124.

	%%%%%%%%%%%%%%%%%%%%%%%%%%%%%%%%%%%%%%%%%%%%%%%%%%%%%%%%%%%%%%%%%%%%%%%%%%%%%%%%%%%%%%%%%%%%%%%%%%%%%%%%
\section{Tropical \texorpdfstring{$k$}{k}-Hodge bundle}
\label{sec:tropical}
%%%%%%%%%%%%%%%%%%%%%%%%%%%%%%%%%%%%%%%%%%%%%%%%%%%%%%%%%%%%%%%%%%%%%%%%%%%%%%%%%%%%%%%%%%%%%%%%%%%%%%%%

Fix integers $g \geq 2$ and $k \geq 1$. In this section we will describe a tropical version $\tropHodge$ of $\bP \Omega^k \cM_g$ together with a tropicalization map
\[ \trop_{\Omega^k} : \bP \Omega^k \cM_g^\an \longrightarrow \tropHodge \ . \]
The underlying set of the tropical $k$-Hodge bundle $\tropHodge$ parametrizes pairs $([\Gamma], D)$ of isomorphism classes of stable tropical curves $\Gamma$ of genus $g$ and effective divisors $D$ linearly equivalent to $kK_\Gamma$. In the special case of $k = 1$ we recover the description of the tropical Hodge bundle from \cite[Definition~4.1]{LU17}. In Section~\ref{subsec:tropical_k_Hodge_bundle} we prove Theorem \ref{intro:thm:trop_k_Hodge_bundle}. To this end we use the moduli space $\tropDiv_{g,d}^\trop$ of tropical curves with effective divisor of fixed degree $d = k(2g-2)$ which was constructed in \cite[Definition 2.1]{MUW17_published} and exhibit $\tropHodge$ as a locus in $\tropDiv_{g,d}^\trop$. The tropicalization map $\trop_{\Omega^k}$ is defined in Section~\ref{subsec:tropicalization} by restricting the more general tropicalization map from \cite[Section 3.1]{MUW17_published}. 

We conclude this section by defining the \emph{realizability locus} as the image of $\trop_{\Omega^k}$ and formally state the realizability problem in Section~\ref{subsec:realizability_problem}.

\subsection{Tropical curves}

\begin{mydef}
	A \emph{graph} is a tuple $G = (V, H, L, \iota, a)$ where
	\begin{enumerate}[(i)]
		\item the finite sets $V$, $H$, and $L$ are the vertices, half-edges and legs of the graph respectively,
		\item the map $\iota : H \to H$ is a fixpoint-free involution on the half-edges $H$ that determines the edges of the graph, and
		\item the map $a : H \cup L \to V$ assigns to every half-edge and leg the incident vertex.
	\end{enumerate}
	For a graph $G$ let $E := \big\{ \{h, h'\} \in H^2 \mid \iota(h) = h' \big\}$ be the set of unoriented edges. In the following we will often denote a graph simply as 3-tuple $(V, E, L)$ of vertices, edges and legs with the rest of the underlying data remaining implicit. If there are no legs, we abbreviate further and simply write $(V, E)$.
	
	The \emph{valence} of a vertex $v \in V$ is defined as $\valence(v) := \vert a^{-1}(v) \vert$. 
	
	A \emph{(vertex) weighted graph} is a graph $G$ together with a map $g : V \to \bN$. The weighted graph is called \emph{stable} if for each vertex $v \in V$ the stability condition
	\[
	2g(v) -2 + \valence(v) > 0
	\]
	holds. The \emph{genus} of a weighted graph is defined to be
	\[ g(G) := b_1(G) + \sum_{v \in V} g(v), \]
	where $b_1(G)$ is the first Betti number of $G$.
\end{mydef}

A \emph{tropical curve} is a connected weighted metric graph $\Gamma$ given by the data of a graph $G$, vertex weights $g : V \to \bN$ and edge lengths $l : E \to \bR_{>0}$. We call $g(v)$ the genus of the vertex $v$. The topological realization of $\Gamma$ is the metric space obtained by gluing real intervals $[0, l(e)]$ for every edge and $[0, \infty)$ for every leg according to adjacency in $G$. Any weighted graph $(G', g')$ giving rise to the same topological realization is referred to as a \emph{model} for $\Gamma$. Note that every stable tropical curve has a unique minimal model in the sense of minimal number of edges and vertices. We will usually not distinguish between topological realization and minimal model.
The \emph{genus} of $\Gamma$ is defined to be the genus of any model (there is no dependence on this choice) and is denoted $g(\Gamma)$.
The tropical curve $\Gamma$ is called \emph{stable} if its minimal model is stable.

\subsection{Moduli of tropical curves}
\label{subsec:moduli_tropical_cuves}

The following description of the moduli space $M_{g,n}^\trop$ of stable tropical curves of genus $g$ with $n$ legs can be found e.g. in \cite[Section 4]{ACP}. Note that in the description of $M_{g,n}^\trop$ the legs are usually assumed to be labeled.
Let $G$ be a weighted graph and let $e$ be an edge in $G$. We denote by $G / \{e\}$ the graph that arises from $G$ by contracting $e$ into a single vertex $v$ of weight 
\begin{equation*}
	g(v) = \begin{cases}
		g(v_1) + g(v_2) & \text{if $e$ was connecting $v_1$ and $v_2$} \\
		g(v_1) + 1 & \text{if $e$ was a self-loop at vertex $v_1$.}
	\end{cases}
\end{equation*}
Define the category $\cG_{g,n}$ with objects being stable weighted graphs of genus $g$ with $n$ legs and morphism are generated by weighted edge contractions $G \to G/\{e\}$ as well as graph automorphisms respecting the labeling of the legs.

Given $G \in \cG_{g,n}$ we associate to it the rational polyhedral cone $\sigma_G := \bR_{\geq 0}^{E(G)}$. In fact, this defines a contravariant functor from $\cG_{g,n}$ to the category of rational polyhedral cones, where edge contractions are taken to isomorphisms onto faces. The moduli space is now defined as
\[ M_{g,n}^\trop := \varinjlim\limits_{\cG_{g,n}} \sigma_G. \]
Note that the points of $M_{g,n}^\trop$ are in one-to-one correspondence with isomorphism classes of tropical curves of genus $g$ with $n$ legs.
A topological space arising as colimit over a finite diagram of rational polyhedral cones where all morphisms are isomorphisms onto faces is called \emph{generalized cone complex} in \cite[Section 2.6]{ACP}.

\subsection{Divisors on tropical curves}

Let $\Gamma$ be a tropical curve without legs. A \emph{divisor} $D$ on $\Gamma$ is an element of the free abelian group generated by the points in the topological realization of $\Gamma$. We denote the abelian group of divisors on $\Gamma$ by $\Div(\Gamma)$. A divisor $D = \sum a_p p$ is called \emph{effective} if $a_p \geq 0$ for every $p$. In this case we write $D \geq 0$. The \emph{degree} of $D$ is defined as $\deg(D) := \sum a_p$. The \emph{support} of $D$ is $\supp(D) := \{ p \in \Gamma \mid a_p \neq 0 \}$. By definition, the support is a finite subset of $\Gamma$. One often imagines an effective divisor $D$ as a pile of $D(p) = a_p$ ``chips'' at every point $p \in \supp(D)$.
The data of an effective divisor $D$ on a tropical curve $\Gamma$ without legs is equivalent to a tropical curve $\tilde \Gamma$ arising from $\Gamma$ by attaching $D(p)$ many unlabeled legs at every $p \in \supp(D)$. From now on all tropical curves are a priori without legs, but given a divisor we will pass to the equivalent representation $\tilde \Gamma$ whenever convenient.

A rational function on $\Gamma$ is a continuous function $f : \Gamma \to \bR$ whose restriction to any edge is piece-wise linear with integer slopes. We denote the set of rational functions on $\Gamma$ by $\Rat(\Gamma)$. Every $f \in \Rat(\Gamma)$ gives rise to an induced divisor
\[ (f) := \sum_{p \in \Gamma}^{} \big( \text{sum of outgoing slopes of $f$ at $p$} \big) \cdot p \in \Div(\Gamma)\ . \]
Note that $(f)$ is indeed a finite sum. Two divisors $D, D' \in \Div(\Gamma)$ are \emph{linearly equivalent} if there exists $f \in \Rat(\Gamma)$ such that $D = D' + (f)$. In this case we write $D \sim D'$. Analogously to \cite[Definition 3.1]{LU17} we define:

\begin{mydef} \label{def:tropical_linear_system}
	Let $D \in \Div(\Gamma)$ be a divisor. We define the \emph{linear system} of $D$ to be
	\[\vert D \vert := \{ D' \in \Div(\Gamma) \mid D' \geq 0 \text{ and } D \sim D' \}. \]
\end{mydef}

The canonical divisor of a tropical curve $\Gamma$ without legs is defined as
\[ K_\Gamma := \sum_{v \in V}^{} \big(2 g(v) - 2 + \valence(v) \big) v.  \]
If $\Gamma$ has legs, then we define $K_\Gamma$ to be the canonical divisor of the tropical curve arising from $\Gamma$ by removing the legs. 
Note that contrary to the classical situation there is a canonical element in the canonical linear system. Furthermore, note that $\deg K_\Gamma = 2g(\Gamma) - 2$. The elements of $\vert k K_\Gamma \vert $ are called \emph{pluri-canonical} divisors -- they are effective divisors by definition.

\subsection{Tropical \texorpdfstring{$k$}{k}-Hodge bundle}
\label{subsec:tropical_k_Hodge_bundle}

We are now ready to define the central object of this section. A priori we define the $k$-Hodge bundle as a set, but the proof of Theorem~\ref{intro:thm:trop_k_Hodge_bundle} shows that it can in fact be considered as a generalized cone complex.

\begin{mydef} \label{def:tropical_k-Hodge_bundle}
	Let integers $g \geq 2$ and $k \geq 1$ be given. Define
	\begin{equation*}
		\tropHodge := \big\{ ([\Gamma], D) \mid [\Gamma] \in M_g^\trop \text{ and } D \in \vert kK_\Gamma \vert \big\}.
	\end{equation*}
	This space is called the \emph{tropical $k$-Hodge bundle}. 
\end{mydef}

Recall \cite[Proposition 2.2]{MUW17_published}, where the authors construct a moduli space $\tropDiv_{g,d}^\trop$ parametrizing pairs $([\Gamma], D)$ of isomorphism classes of stable tropical curves of genus $g$ and effective divisors $D \in \Div(\Gamma)$ of degree $d$. The construction is completely analogous to that of $M_{g,n}^\trop$ given in Section \ref{subsec:moduli_tropical_cuves} above; the only modification is that legs (corresponding to support points of divisors) are now unlabeled. Hence, the colimit involves more automorphisms.

\begin{proof}[Proof of Theorem \ref{intro:thm:trop_k_Hodge_bundle}]
	We identify $\tropHodge$ as a subcomplex of (a subdivision of) $\tropDiv_{g,d}^\trop$ for $d = k(2g-2)$ as follows. Let $([\Gamma], D) \in \tropDiv_{g, d}^\trop$ and let $G$ be the minimal model for $(\Gamma, D)$ such that $D$ is supported on the vertices of $G$. By construction $([\Gamma], D)$ is contained in (a quotient of) the cone $\sigma_G = \bR_{\geq 0}^{E(G)}$. We will now describe finitely many rational polyhedral cones in $\sigma_G$ that contribute to $\tropHodge$.
	
	By Definition \ref{def:tropical_k-Hodge_bundle} the pair $([\Gamma], D)$ is contained in $\tropHodge$ if and only if there exists a rational function $f$ on $\Gamma$ such that 
	\begin{equation} \label{eq:pluricanonical_divisor}
		D = kK_\Gamma + (f). 
	\end{equation}
	Fix an orientation for the edges of $G$. To specify a rational function $f$ (up to a global additive constant in $\bR$) that satisfies \eqref{eq:pluricanonical_divisor} we first need to choose an initial slope $m_e \in \bZ$ at the beginning of every edge $e \in E(G)$ subject to the condition that at every vertex $v \in V(G)$
	\begin{equation} \label{eq:condition_existence_rational_function}
		D(v) = k\big(2g(v) - 2 + \valence(v)\big) + \sum_{\text{outward edges at } v}^{} m_e - \sum_{\text{inward edges of }v}^{} m_e
	\end{equation}
	holds. 
	For each such choice a linear subspace of $\sigma_{G}$ is cut out by the continuity of~$f$ and this determines a cone for each choice of $\{m_e\}_{e \in E(G)}$. By \cite[Lemma 1.8]{GK08}\footnote{Note that \cite[Lemma 1.8]{GK08} starts with a choice of slopes coming from a rational function and therefore obtains only finitely many collections of slopes compatible with Equation~\eqref{eq:condition_existence_rational_function}. If on the other hand we start by choosing slopes, we might make an \enquote{impossible choice}, e.g. assigning a nonzero $m_e$ to a self-loop $e$. In this case the continuity of $f$ forces the length of $e$ to be zero. Thus, the choice of $\{m_e\}_e$ yields a non-trivial cone, which however does not contribute in any non-trivial way to the cone complex structure since it coincides with the cone associated to the edge contraction $G/e$.} only finitely many of these cones have non-empty intersection with $\sigma_G^\circ = \bR_{>0}^{E(G)}$.
	The finitely many cones determined this way when ranging over all $G$ constitute the entire generalized cone complex structure of $\tropHodge$.
	
	For the statement on the dimension recall from \cite[Proposition 2.2]{MUW17_published} that 
	\[ \dim \tropDiv_{g, k(2g-2)}^\trop = 3g - 3 + k(2g-2)   \ .\] 
	This provides an upper bound. This bound is attained by the cone described in Example~\ref{exa:maximal_cone_tropicalHodge}. Moreover, the tropical $k$-Hodge bundle is not equidimensional: see \cite[Figure~1]{LU17} for an example in $k = 1$ and Example~\ref{exa:tropicalHodge_not_purediml} for a generalization to any $k$.
\end{proof}

\begin{myexa} \label{exa:maximal_cone_tropicalHodge}
	Consider the graph $G$ depicted in Figure \ref{fig:maximal_cone_tropicalHodge}. It consists of $g$ vertices each of which has one self-loop as well as an incident separating edge joining it to a central chain of $g-2$ vertices. All vertices have weight 0. This graph is trivalent and hence stable. If $G$ is endowed with edge-lengths, we obtain a tropical curve $\Gamma$. The canonical divisor of $\Gamma$ is the sum over all trivalent vertices, hence $kK_\Gamma$ has $k$ chips on each vertex. All of these chips can be moved independently onto the bridge edges joining the vertices with self-loops to the rest of the graph. Call the resulting divisor $D$ (see Figure \ref{fig:maximal_cone_tropicalHodge} for a picture of $D$ with $k = 3$). The pair $(\Gamma, D)$ has precisely $(3 + 2k)(g-1)$ degrees of freedom: $g$ for the length of the self-loops, $2g-3$ for the lengths of the remaining edges, and $k(2g-2)$ for the positions of the support points of $D$ along the edges they lie on. Hence, the cone of tropical curves with underlying graph $G$ and divisor $D$ is of maximal dimension in $\tropHodge$.
	
	\begin{figure}[htb]
		\centering
		\begin{minipage}{0.9\textwidth}
			\centering
			\begin{tikzpicture}[scale=2]
	
	\def\rad{0.3}													%radius for self-loops
	\newcommand{\blackDot}[1]{\fill #1 circle [radius = 1pt]}		%draw support point of divisor 
																	%at position #1
	\def\k{3}				%ultimately specifies the number of black dots in the picture
	\def\twok{6}
	\def\twokplusone{7}
	\def\kplusone{4}
	
	%left half of graph
	\draw (-1-\rad, 0) circle [radius = \rad];
	\path[draw] (-1,0) -- ++ (1,0) 
		node(A)[inner sep = 0]{} 
		-- ++ (0,1);
	\draw (0,1+\rad) circle [radius = \rad];
	\path[draw] (A) -- ++ (1,0) 
		node(B)[inner sep = 0]{} 
		-- ++ (0,1);
	\draw (1,1+\rad) circle [radius = \rad];
	\path[draw] (B) -- ++ (0.5,0);
	
	%dashed connecting line
	\draw (1.75, 0) node[anchor = center] {$\cdots$};

	%right half of graph
	\draw (3.5+\rad, 0) circle [radius = \rad];
	\path[draw] (3.5,0) -- ++ (-1,0) 
		node(C)[inner sep = 0]{} 
		-- ++ (0,1);
	\draw (2.5,1+\rad) circle [radius = \rad];
	\path[draw] (C) -- ++ (-0.5,0);

	%add in the black dots
	\foreach \x in {1,..., \k} {
		\blackDot{(-1 + \x / \kplusone, 0)};
		\blackDot{(2.5 + \x / \kplusone, 0)};
	}
	\foreach \y in {1,...,\twok} { 
		\blackDot{(0, \y / \twokplusone)};
		\blackDot{(1, \y / \twokplusone)};
		\blackDot{(2.5, \y / \twokplusone)};
	}
	
\end{tikzpicture}
			\caption{Graph $G$ with divisor $D$ defining a cone in $\bP \Omega^3 M_g^\trop$ of maximal dimension.}
			\label{fig:maximal_cone_tropicalHodge}
		\end{minipage}
	\end{figure}
\end{myexa}

\begin{myexa} \label{exa:tropicalHodge_not_purediml}
	Consider the same graph $G$ as in Example~\ref{exa:maximal_cone_tropicalHodge} but this time with a pluri-canonical divisor $D'$ which has all but two chips on bridge edges (as before) and the remaining two chips on one of the self-loops (see Figure~\ref{fig:tropicalHodge_not_purediml} for a picture with $k = 3$). In this case, any metrization $(\Gamma, D')$ of this situation has one degree of freedom less than in Example~\ref{exa:maximal_cone_tropicalHodge}: the continuity of the rational function $f$ which defines $D' = kK_\Gamma + (f)$ enforces the distance between each of the two chips on the loop and the trivalent vertex where the loop is rooted to be equal. This condition cannot be overcome without changing the combinatorial type of the divisor, so the cone of $\tropHodge$ associated to $(G, D')$ is still maximal.
	
	\begin{figure}[htb]
		\centering
		\begin{minipage}{0.9\textwidth}
			\centering
			\begin{tikzpicture}[scale=2]
	
	\def\rad{0.3}													%radius for self-loops
	\newcommand{\blackDot}[1]{\fill #1 circle [radius = 1pt]}		%draw support point of divisor 
	%at position #1
	\def\k{3}				%ultimately specifies the number of black dots in the picture
	\def\twok{6}
	\def\twokplusone{7}
	\def\kplusone{4}
	
	%left half of graph
	\draw (-1-\rad, 0) circle [radius = \rad];
	\path[draw] (-1,0) -- ++ (1,0) 
	node(A)[inner sep = 0]{} 
	-- ++ (0,1);
	\draw (0,1+\rad) circle [radius = \rad];
	\path[draw] (A) -- ++ (1,0) 
	node(B)[inner sep = 0]{} 
	-- ++ (0,1);
	\draw (1,1+\rad) circle [radius = \rad];
	\path[draw] (B) -- ++ (0.5,0);
	
	%dashed connecting line
	\draw (1.75, 0) node[anchor = center] {$\cdots$};

	%right half of graph
	\draw (3.5+\rad, 0) circle [radius = \rad];
	\path[draw] (3.5,0) -- ++ (-1,0) 
	node(C)[inner sep = 0]{} 
	-- ++ (0,1);
	\draw (2.5,1+\rad) circle [radius = \rad];
	\path[draw] (C) -- ++ (-0.5,0);

	%add in the black dots
	\blackDot{(-1 + 2 / \kplusone, 0)};
	\blackDot{(-1-\rad, \rad)};
	\blackDot{(-1-\rad, -\rad)};
	\foreach \x in {1,..., \k} {
		\blackDot{(2.5 + \x / \kplusone, 0)};
	}
	\foreach \y in {1,...,\twok} { 
		\blackDot{(0, \y / \twokplusone)};
		\blackDot{(1, \y / \twokplusone)};
		\blackDot{(2.5, \y / \twokplusone)};
	}
	
\end{tikzpicture}
			\caption{Graph $G$ with divisor $D'$ defining a cone in $\bP \Omega^3 M_g^\trop$ which is maximal but not of maximal dimension.}
			\label{fig:tropicalHodge_not_purediml}
		\end{minipage}
	\end{figure}
\end{myexa}

\subsection{Analytification}
\label{subsec:analytification}

The natural domain for the tropicalization maps use in this article is the analytification $\cM^\an$ in the sense of \cite{Ber90}, where $\cM$ may be substituted with any of the relevant moduli spaces such as $\cM_{g,n}$, $\bP \Omega^k \cM_g$, or $\classicDiv_{g, d}$. More precisely, what we will be working with is the underlying topological space of $\cM^\an$ which by abuse of notation we will denote with the same symbol again. If we denote the base field by $K$, then the points of $\cM^\an$ are precisely the $K'$-points of $\cM$ where $K'$ is any valued field extension of $K$, subject to the following equivalence relation: a $K'$-point and a $K''$-point are equivalent if there is a valued field extension $L$ of both $K'$ and $K''$ such that the base change to $L$ agrees (see \cite[Section~3]{Uli_stack_quotient} for details).

\subsection{Tropicalization}
\label{subsec:tropicalization}

Let $g \geq 2$ be an integer and let $X$ be a smooth, proper algebraic curve of genus $g$ over a non-Archimedean field $K$, i.e. a $K$-point of the moduli space of curves. 
Possibly after passing to a finite non-Archimedean field extension $K \subseteq K'$, there is a stable model $\cX$ of $X$ over the valuation ring $R$ of $K'$. The central fiber $\cX_0$ is a nodal curve. Denote the set of irreducible components of $\cX_0$ by $\{C_v\}_{v \in V}$. Let $G$ denote the dual graph of $\cX_0$, i.e. the set of vertices of $G$ is precisely $V$ and for every node in $\cX_0$ there is one edge in $G$. Here, the edge corresponding to a node $q$ joins two distinct vertices $v$ and $w$ if $q \in C_v \cap C_w$ and it is a self-loop at vertex $v$ if $q$ is a node of $C_v$. This graph is vertex weighted by $g(v)$ equal to the genus of the normalization $C_v^\nu$ of $C_v$. We endow the edge $e$ corresponding to some node $q \in \cX_0$ with an edge length in the following way. Write $\cX$ locally around $q$ as $xy = f$ for $f \in R$ and let $\valuation_R$ denote the valuation of $R$. The length of $e$ is defined to be $\valuation_R(f)$. The resulting metric graph $\Gamma$ is the tropicalization of $X$ in the sense of curves. The tropicalization map
\begin{align*}
	\trop : \cM_{g, n}^\an &\longrightarrow M_{g, n}^\trop \\
	X &\longmapsto \Gamma
\end{align*}
is well-defined (see \cite[Lemma-Definition~2.2.7]{Viv13} for independence of the choice of $K'$), continuous, and surjective by \cite[Theorem 1.2.1]{ACP}. 

If $X$ was endowed with a divisor $D$ then we obtain a divisor on $\trop(X)$ by specialization. \cite{MUW17_published} presents this extended construction as a map between moduli spaces again. More precisely, for any degree $d \geq 0$ the authors of \cite{MUW17_published} construct moduli spaces $\classicDiv_{g, d}$ and $\tropDiv_{g, d}^\trop$ of pairs of smooth algebraic (resp.\ stable tropical) curves of genus $g$ together with an effective divisor of degree $d$ and give a tropicalization map $\trop_{g, d} : \classicDiv_{g, d}^\an \to \tropDiv_{g, d}^\trop$ in the following way.
The curve $X$ can be extended to a semi-stable model $\cX$ such that $D$ extends to a divisor $\cD$ on $\cX$ that does not meet any of the nodes of the special fiber. As before, this might require a base change to a non-Archimedean field extension. The specialization of $D$ to $\Gamma$ is defined to be the multidegree of $\cD_0 := \cX_0 \cap \cD$, i.e.
\[ \operatorname{mdeg}(\cD_0) = \sum_{v \in V} \deg(\cD_0 |_{C_v^\nu}) \cdot [v]. \]

For the purposes of this article we simply define 
\[ \trop_{\Omega^k} : \bP \Omega^k \cM_g^\an \longrightarrow \tropHodge \]
to be the restriction of $\trop_{g, k(2g-2)}$.

\begin{mylem} \label{lem:trop_Omega}
	The map $\trop_{\Omega^k}$ is well-defined, continuous, proper, and closed.
\end{mylem}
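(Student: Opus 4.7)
The plan is to verify the four properties in turn. For \textbf{well-definedness}, take $(X,[\eta])\in\bP\Omega^k\cM_g$ defined over a non-Archimedean field; then $(\eta)$ is effective of degree $k(2g-2)$ and linearly equivalent to $kK_X$ in $\operatorname{Pic}(X)$. By Baker's specialization theorem~\cite{Bak08}, the specialization map $\rho\colon\Div(X)\to\Div(\Gamma)$ sends principal divisors to principal divisors and satisfies $\rho(K_X)\sim K_\Gamma$, so $\rho((\eta))\sim kK_\Gamma$ and $\trop_{\Omega^k}(X,[\eta])$ lies in $\tropHodge$ by Definition~\ref{def:tropical_k-Hodge_bundle}. \textbf{Continuity} is then automatic, since $\trop_{\Omega^k}$ is the restriction (in both source and target) of the continuous map $\trop_{g,k(2g-2)}$ from~\cite{MUW17_published}, and $\tropHodge$ carries the subspace topology from $\tropDiv_{g,k(2g-2)}^\trop$.

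For \textbf{properness}, the strategy is to identify $\bP\Omega^k\cM_g$ with a closed subvariety of $\classicDiv_{g,k(2g-2)}$ via $(X,[\eta])\mapsto(X,(\eta))$. Being pluri-canonical is a closed condition---it is cut out by the relative Abel--Jacobi morphism hitting the section $[kK]$ of the relative Picard scheme---so the inclusion $\bP\Omega^k\cM_g^\an\hookrightarrow\classicDiv_{g,k(2g-2)}^\an$ is a closed embedding of analytic spaces. Since the restriction of a proper continuous map to a closed subspace of the source remains proper, properness of $\trop_{\Omega^k}$ reduces to that of $\trop_{g,k(2g-2)}$. The latter I would establish by passing to the compactification $\overline{\cM}_{g,n}$ (with $n=k(2g-2)$ unlabelled marked points corresponding to the support of the divisor): the analytification $\overline{\cM}_{g,n}^\an$ is compact Hausdorff, and any continuous map from a compact source into the Hausdorff generalized cone complex $\tropDiv_{g,k(2g-2)}^\trop$ is automatically proper. \textbf{Closedness} is then immediate from properness together with the Hausdorff property of $\tropHodge$ as a generalized cone complex.

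I expect the main difficulty to lie in the properness step, specifically in carefully realizing $\bP\Omega^k\cM_g$ as a closed (rather than merely locally closed) subvariety of $\classicDiv_{g,k(2g-2)}$, and in reconciling the tropicalization on the open smooth locus with the one on the compactified moduli space—in particular in checking that preimages of compact subsets do not accumulate against boundary strata of $\overline{\cM}_{g,n}^\an$. These are largely bookkeeping issues, but they constitute the non-trivial technical content of the lemma.
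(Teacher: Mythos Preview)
Your approach is essentially the same as the paper's. For well-definedness and continuity, the arguments coincide: Baker's specialization respects linear equivalence and takes the canonical class to the tropical canonical class, and restricting a continuous map remains continuous. For properness and closedness, you correctly reduce to the corresponding properties of the ambient map $\trop_{g,k(2g-2)}$ via the closed embedding $\bP\Omega^k\cM_g^\an\hookrightarrow\classicDiv_{g,k(2g-2)}^\an$.

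The only divergence is that you then attempt to re-derive properness of $\trop_{g,k(2g-2)}$ from scratch via a compactification argument, and you flag this as the hard step. The paper sidesteps this entirely: properness and closedness of $\trop_{g,d}$ are already established in \cite[Section~3.2]{MUW17_published} (by precisely the kind of extended-skeleton argument you sketch), and the paper simply cites that reference. The ``bookkeeping issues'' you anticipate are therefore already absorbed into the literature and need not be reworked here. One minor point: the paper phrases the reduction in terms of $\tropHodge$ being closed in $\tropDiv_{g,d}^\trop$, whereas your observation that the \emph{source} $\bP\Omega^k\cM_g$ is closed in $\classicDiv_{g,d}$ is the more directly relevant hypothesis for restricting a proper map; both facts are true, and the argument assembles the same way.
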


\begin{proof}
	By \cite[Lemma 4.20]{Bak08}, the specialization of a canonical divisor on a curve $X$ is the canonical divisor on $\trop(X)$.
	Furthermore, the specialization map is linear and linearly equivalent divisors tropicalize to (tropically) linearly equivalent divisors (see e.g. \cite[Theorem 4.2]{BrandtUlirsch}). In particular, $\trop_{\Omega^k}$ is well-defined. 
	The map is also continuous because $\trop_{g,d}$ is continuous by {\cite[Theorem 3.2]{MUW17_published}}.
	Towards properness of $\trop_{\Omega^k}$, recall from \cite[Section~3.3]{MUW17_published} that $\trop_{g, d}$ is proper. 
	We argue that $\bP \Omega^k \cM_g^\an$ is closed in $\classicDiv_{g, d}^\an$, which then implies properness for $\trop_{\Omega^k}$. Indeed, we have that $\bP \Omega^k \cM_g$ is a closed substack of $\classicDiv_{g, d}$ and hence there is an closed immersion of the underlying coarse moduli spaces. 
	The analytification of a closed immersion of schemes is a closed immersion of Berkovich spaces by \cite[Corollary~3.5.2]{Ber90}. Finally, the underlying topological spaces of the analytic stacks $\bP \Omega^k \cM_g^\an$ and $\classicDiv_{g, d}^\an$ are homeomorphic to the topological spaces of their coarse moduli spaces by \cite[Proposition~3.8]{Uli_stack_quotient} and this finishes the argument. 
	Closedness of $\trop_{\Omega^k}$ follows from properness since $\tropHodge$ is a locally compact Hausdorff space.
\end{proof}

Note that $\bP \Omega^k \cM_g$ as well as $\tropHodge$ admit natural forgetful maps to $\cM_g$ and $M_g^\trop$, respectively. These are compatible with tropicalization maps in the following sense.

\begin{myprop}
	The diagram	
	\begin{equation*}
		\begin{tikzcd}
			\bP \Omega^k \cM_g^\an \arrow[rr, "\trop_{\Omega^k}"] \arrow[d] & & \tropHodge \arrow[d] \\
			\cM_g^\an \arrow[rr, "\trop"] & & M_g^\trop
		\end{tikzcd}
	\end{equation*}
	commutes, where the vertical arrows are natural forgetful morphisms.
\end{myprop}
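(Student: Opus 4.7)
The plan is to trace a point $[(X,\eta)] \in \bP\Omega^k \cM_g^\an$ around the square using the definitions from Section~\ref{subsec:tropicalization}, and observe that both paths land on the tropicalization of the underlying curve $X$. Concretely, by construction $\trop_{\Omega^k}$ is the restriction of $\trop_{g,k(2g-2)}$, so the right-then-down path sends $[(X,\eta)]$ first to $(\Gamma, \mathrm{spec}(D_\eta))$, where $D_\eta$ is the effective pluri-canonical divisor cut out by $\eta$ and $\Gamma$ is the dual tropical curve of a semi-stable model $\cX$ whose special fiber carries the closure of $D_\eta$ away from the nodes, and then forgets the divisor to yield $[\Gamma] \in M_g^\trop$. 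The down-then-right path first forgets $\eta$ to produce $X$ and then returns $\trop(X) =: [\Gamma_{\mathrm{st}}]$, the class of the dual graph of the stable model of $X$.

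It therefore suffices to verify the equality $[\Gamma] = [\Gamma_{\mathrm{st}}]$ in $M_g^\trop$. The semi-stable model $\cX$ can be obtained from the stable model $\cX_{\mathrm{st}}$ by a finite sequence of admissible blow-ups at smooth closed points of the special fiber, each of which on the dual graph side amounts to inserting a $2$-valent genus~$0$ vertex on an existing edge (and possibly attaching a leg at which a piece of $D_\eta$ is ultimately supported). After forgetting legs, $\Gamma$ is thus a subdivision of $\Gamma_{\mathrm{st}}$ by $2$-valent genus~$0$ vertices. By the minimal-model description of $M_g^\trop$ recalled in Section~\ref{subsec:moduli_tropical_cuves}, any such subdivision represents the same isomorphism class of tropical curves, so $[\Gamma] = [\Gamma_{\mathrm{st}}]$.

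The main (and only) point requiring care is this bookkeeping of models: tropicalizing the pair $(X, D_\eta)$ and tropicalizing $X$ alone invoke a priori different semi-stable models, and the argument reduces to observing that these models differ by blow-ups at smooth points of the special fiber and hence give the same point of $M_g^\trop$. Beyond that, commutativity on the set of points is a direct unwinding of the constructions from Section~\ref{subsec:tropicalization}, and since the horizontal maps have already been shown to be well-defined and continuous and the vertical forgetful maps are tautologically continuous, this suffices for commutativity of the diagram.
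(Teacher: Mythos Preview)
Your argument is essentially the explicit ``unwind the definitions'' route that the paper mentions as an alternative (the paper itself just cites \cite[Theorem~1.2.2]{ACP}), and your conclusion is correct. One small inaccuracy: the semi-stable model $\cX$ used for $\trop_{g,d}$ is obtained from the stable model by blowing up at \emph{nodes} of the special fiber through which the closure of $D_\eta$ would pass (possibly after base change), not at smooth closed points; it is blow-ups at nodes that insert $2$-valent genus~$0$ vertices on existing edges, exactly as you then describe. This does not affect your argument, since either kind of modification is undone by stabilizing after forgetting the legs, so $[\Gamma] = [\Gamma_{\mathrm{st}}]$ in $M_g^\trop$ as you claim.
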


\begin{proof}
	This is essentially a modified version of the first part of \cite[Theorem 1.2.2]{ACP} using unlabeled points. Alternatively, one can see this with the explicit descriptions of the two tropicalization maps that were given above.
\end{proof}

\subsection{The realizability problem}
\label{subsec:realizability_problem}

We conclude this section with the following observation: by Theorem \ref{intro:thm:trop_k_Hodge_bundle}, the dimension (of a maximal cone) of $\tropHodge$ is $(3 + 2k)(g-1)$. On the other hand, $\dim \bP \Omega^k \cM_g \leq (2+2k)(g-1)-1$ by \cite[Theorem 1.1]{BCGGM19}. By the following argument, this implies that $\trop_{\Omega^k}$ cannot be surjective.

First note that $\bP \Omega^k \cM_g$ is a closed substack of $\classicDiv_{g,d}$. The compactification $\classicDiv_{g,d} \subseteq \compact{\classicDiv}_{g,d}$ was identified in \cite[Theorem 1.2]{MUW17_published} as a toroidal embedding of Deligne-Mumford stacks in the sense of \cite[Definition 6.1.1]{ACP}. This means that locally around any geometric point of $\compact{\classicDiv}_{g,d}$ there exists a so-called \emph{small toric chart} $V$ \cite[Definition 6.2.4]{ACP}, i.e.\ a scheme $V$ and an \'etale morphism $V \to \compact{\classicDiv}_{g,d}$ such that the pull-back $V^\circ$ of $\classicDiv_{g,d}$ to $V$ is a toroidal embedding $V^\circ \subseteq V$ in the sense of \cite{KKMS}. In particular, the boundary of $V^\circ \subseteq V$ is without self-intersection. Now consider the pull-back $U$ of $\bP \Omega^k \cM_g$ to $V$. The tropicalization $\trop_V(U)$ in the sense of \cite{Uli17} is then a finite rational polyhedral cone complex of dimension $\leq \dim U \leq \dim \bP \Omega^k \cM_g$ by \cite[Theorem 1.1]{Uli15}. Taking the supremum over all small toric charts $V$ we get
\[ \dim \trop_{\compact{\classicDiv}_{g,d}} (\bP \Omega^k \cM_g) = \sup \dim \trop_V(U) \leq \dim \bP \Omega^k \cM_g. \]
It remains to argue that this coincides with the dimension of $\Im \trop_{\Omega^k}$. To this end note that by \cite[Theorem 1.2]{Uli17} the tropicalization $\trop_{\compact{\classicDiv}_{g, d}}$ coincides with the retraction map $\rho : \compact{\classicDiv}_{g, d}^\an \to \mathfrak{S}(\compact{\classicDiv}_{g, d})$ in the sense of \cite{Thuillier}. However, the same holds for $\trop_{\Omega^k}$ by \cite[Theorem~3.2]{MUW17_published}, so the claim follows. Putting everything together we conclude
$$\dim \trop_{\Omega^k}(\bP \Omega^k \cM_g) \leq (2+2k)(g-1)-1 < (3+2k)(g-1) = \dim \tropHodge  .$$ 
Thus, $\trop_{\Omega^k}$ cannot be surjective. This motivates the following definition.

\begin{mydef} \label{def:realizability_locus}
	The image of $\trop_{\Omega^k}$ is called \emph{realizability locus}. A tropical curve $\Gamma$ with effective pluri-canonical divisor $D \in |kK_\Gamma|$ is called \emph{realizable} if the pair $([\Gamma], D) \in \tropHodge$ is contained in the realizability locus.
\end{mydef}

The realizability problem asks for a criterion to determine if $([\Gamma], D)$ is in the realizability locus. Corollary~\ref{cor:reduction_to_covers} together with Theorem~\ref{thm:main_theorem} provide its answer.

	\section{Moduli space of multi-scale \texorpdfstring{$k$}{k}-differentials}

Fix integers $g \geq 2$ and $k \geq 1$.
Throughout this section we work over the field $\bC$ of complex numbers.
A tuple $\mu = (m_1, \dots, m_n) \in \bZ^n$ such that $\sum m_i = k (2g-2)$ is called a \emph{type}.
The stratum of $k$-differentials of type $\mu$ is the space $\Omega^k \cM_g(\mu)$ parametrizing $k$-differentials where the zero and pole orders are as prescribed by $\mu$. If all entries of $\mu$ are nonnegative, then this is a subspace of the $k$-Hodge bundle $\Omega^k \cM_g$, otherwise it is a subspace of the $k$-Hodge bundle twisted by the negative part of $\mu$.

For an integer $d \mid k$, taking a global $d$-th power of a $k/d$-differential on a curve $X$ gives rise to a $k$-differential on $X$.
We will often be interested in those $k$-differentials that are not global powers of $k/d$-differentials.

\begin{mydef}
	A $k$-differential is called \emph{primitive} if it is not a global $d$-th power of a $k/d$-differential for any $d > 1$.
	We denote the union of connected components of $\Omega^k \cM_g(\mu)$ parametrizing primitive $k$-differentials by $\Omega^k \cM_g(\mu)^{\prim}$.
\end{mydef}

For strata of abelian differentials $\Omega \cM_{g} (\mu)$, the authors of \cite{BCGGM} constructed a closure $\MSD{g}{n}{\mu}$, the \emph{moduli space of multi-scale differentials}.%
\footnote{\label{note:order} In \cite{BCGGM}, the marked points of the stratum are labeled. We consider the marked points to be unlabeled, i.e.\ we consider the quotient of the space in \cite{BCGGM} by $\mathrm{Sym}(\mu) = \{ \phi \in S_n \mid m_{\phi(i)} = m_i \text{ for } i = 1, \ldots, n \}$.}
Recall that $\bC^\times$ acts on $\Omega \cM_{g} (\mu)$ by multiplication on the differential.
This action extends to $\MSD{g}{n}{\mu}$ and the projectivization $\PMSD{g}{n}{\mu}$ with respect to this action is a well-behaved compactification of $\bP \Omega \cM_{g} (\mu)$.
The construction of $\MSD{g}{n}{\mu}$ was generalized to strata of primitive $k$-differentials $\Omega^k \cM_{g} (\mu)^{\prim}$ in \cite{CMZ19}. 

Let $(X, \eta)$ be a primitive $k$-differential in $\Omega^k \cM_{g} (\mu)^{\prim}$.
In Section~\ref{subsec:normalized_covers} we recall that $(X, \eta)$ admits a canonical cover 
\[
\pi : (\hat X, \omega) \longrightarrow (X, \eta),
\]
where $(\hat X, \omega)$ is an abelian differential of some type $\hat \mu$.
The canonical cover is unique up to multiplication of $\omega$ by a $k$-th root of unity.
Hence, after projectivizing, there is a well-defined map $\bP \Omega^k \cM_{g} (\mu)^{\prim} \to \bP \Omega \cM_{\hat g} (\hat \mu)$ for $\hat g$, $\hat \mu$ as described in Section~\ref{subsec:normalized_covers} below.
In other words, we can think of $\bP \Omega^k \cM_g (\mu)^{\prim}$ not only as a space parametrizing primitive $k$-differentials of type $\mu$, but equivalently as a space parametrizing canonical covers.

The boundary of the compactification constructed in \cite{CMZ19} parametrizes so-called \emph{multi-scale $k$-differentials}. These are twisted $k$-differentials with the additional data of an (enhanced) level graph together with some compatibility conditions. Details are outlined in Sections~\ref{subsec:twisted_k-differentials} and \ref{subsec:enhanced_level_graph}.
The compactification has the following properties.

\begin{mythm}[{\cite{CMZ19}}]
	There exists a complex orbifold $\bP \MSD[k]{g}{n}{\mu}$, the \emph{moduli space of multi-scale $k$-differentials}, with the following properties.%
	\footnote{Here again, we consider the quotient of the space in \cite{CMZ19} by $\mathrm{Sym}(\mu)$.}
	\begin{enumerate}[(i)]
		\item The space $\PMSD[k]{g}{n}{\mu}$ is a compactification of $\bP \Omega^k \cM_g(\mu)^{\prim}$.
		\item Via the canonical cover construction, the space $\bP \MSD[k]{g}{n}{\mu}$ is embedded as a suborbifold in the corresponding stratum $\bP \MSD{\hat g}{\hat n}{\hat \mu}$ of abelian multi-scale differentials.
	\end{enumerate}
\end{mythm}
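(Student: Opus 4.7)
The plan is to realize $\bP \MSD[k]{g}{n}{\mu}$ as a suborbifold of the already-known moduli space $\bP \MSD{\hat g}{\hat n}{\hat \mu}$ of abelian multi-scale differentials via the canonical cover construction, and to derive both assertions from this embedding. On the open stratum, Section~\ref{subsec:normalized_covers} provides a canonical cover map
\[
\Phi : \bP \Omega^k \cM_g(\mu)^{\prim} \longrightarrow \bP \Omega \cM_{\hat g}(\hat \mu),
\]
whose image is precisely the locus of abelian differentials $[\hat X, \omega]$ admitting an automorphism $\tau$ of order $k$ with $\tau^\ast \omega = \zeta \omega$ for a fixed primitive $k$-th root of unity $\zeta$. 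The map $\Phi$ is injective since $(X, \eta)$ is recovered as the quotient $\hat X / \langle \tau \rangle$ together with the pushforward of $\omega^k$, and it realizes the domain as the intersection of the fixed-point locus of $\langle \tau \rangle$ with a $\zeta$-eigenvalue condition on the differential, both of which are closed suborbifold conditions.

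Next, I would extend $\Phi$ across the boundary. Given a candidate multi-scale $k$-differential consisting of a twisted $k$-differential on a nodal curve together with its enhanced level graph and prong-matchings (Sections~\ref{subsec:twisted_k-differentials} and \ref{subsec:enhanced_level_graph}), the canonical cover is constructed component-wise: on each irreducible component the local meromorphic $k$-differential admits its own canonical cover, and gluing along the prong-matchings produces a nodal curve $\hat X \to X$ carrying a twisted abelian differential. One must check that the enhanced level structure lifts coherently to $\hat X$, that the prong-matchings on $X$ determine prong-matchings on $\hat X$, and that the global $k$-residue condition on $(X, \eta)$ implies the global residue condition on $(\hat X, \omega)$, so that the outcome lies in $\bP \MSD{\hat g}{\hat n}{\hat \mu}$.

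With the extension in place, I would \emph{define} $\bP \MSD[k]{g}{n}{\mu}$ as the locus in $\bP \MSD{\hat g}{\hat n}{\hat \mu}$ cut out by the same two conditions as on the open stratum: existence of a deck transformation $\tau$ of order $k$, and the eigenvalue condition $\tau^\ast \omega = \zeta \omega$. Because these conditions are both closed and linear on the differential in every plumbing chart, this locus is a closed complex suborbifold of the compact orbifold $\bP \MSD{\hat g}{\hat n}{\hat \mu}$ and hence itself compact. Since its intersection with the open stratum equals $\Phi\bigl(\bP \Omega^k \cM_g(\mu)^{\prim}\bigr)$ by construction, and since the boundary points it contains are limits of primitive $k$-differentials through the plumbing deformations, we obtain (i); assertion (ii) is then definitional.

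The main obstacle will be the boundary extension of $\Phi$: one has to describe the canonical cover in a neighborhood of a nodal limit, where the various level scales and the prong-matching data must cohere with the $\bZ/k\bZ$-symmetry. The delicate point is that the automorphism group of a multi-scale $k$-differential is in general a non-trivial extension of the automorphism group of its canonical cover by $\langle \tau \rangle$, so verifying that the orbifold atlas descends correctly requires carefully tracking how the plumbing coordinates on the cover transform under the deck group, and ensuring that the eigenvalue constraint is preserved under all the smoothings that occur in a chart. Once this compatibility is established, the suborbifold structure on the image is automatic, and both assertions of the theorem follow from standard properties of fixed loci of finite group actions on complex orbifolds.
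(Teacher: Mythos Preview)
This theorem carries no proof in the paper: it is stated with attribution to \cite{CMZ19} and is used as a black box. There is therefore nothing in the paper to compare your proposal against.

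That said, your sketch is a reasonable high-level summary of how \cite{CMZ19} actually proceeds: one embeds the primitive $k$-stratum into the abelian stratum via the canonical cover, characterizes the image by the $\tau$-action and the eigenvalue condition $\tau^\ast\omega=\zeta\omega$, and then takes the closure inside $\bP\MSD{\hat g}{\hat n}{\hat\mu}$. Be aware, however, that several of your ``one must check'' items are substantial in \cite{CMZ19}. In particular, defining the locus merely as ``points admitting an order-$k$ automorphism with the eigenvalue property'' does not immediately yield a smooth suborbifold: the set-theoretic fixed locus of a finite group action on an orbifold need not be smooth without further argument, and here one must work in the Dehn-space/plumbing coordinates, track how the level rotation torus and prong-matching torsors interact with $\tau$, and verify that the $\zeta$-eigenspace condition cuts out a submanifold transversally in each chart. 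Your paragraph on the ``main obstacle'' identifies the right place where the work lies, but the actual verification is the content of \cite{CMZ19} rather than a routine check. For the purposes of the present paper you should simply cite the result.
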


We will introduce some more properties of $\bP \MSD[k]{g}{n}{\mu}$ in Sections~\ref{subsec:empty_strata} and \ref{subsec:image_residue_map}, which will turn out to introduce major difficulties in solving the realizability problem in Section~\ref{sec:realizability_locus}.

\subsection{Twisted $k$-differentials}
\label{subsec:twisted_k-differentials}

The underlying curves of the boundary points of the moduli space of multi-scale $k$-differentials $\PMSD[k]{g}{n}{\mu}$ will be nodal curves. The $k$-differentials will degenerate into so-called twisted $k$-differentials with some additional data and compatibility conditions that we will describe in the following.

For an abelian differential, the residue at a pole is defined as the coefficient in front of $z^{-1}$ in the Laurent expansion around that pole.
To define a useful notion of residues for $k$-differentials, recall from \cite[Proposition~3.1]{BCGGM19} that a $k$-differential $\eta$ of order $m = \ord_0 \eta$ may locally be written as
\begin{equation}
	\label{eq:kdifflocal}
	\begin{dcases*}
		z^m (dz)^k & if $m > -k$ or $k \nmid m$, \\
		\left(\frac{r}{z}\right)^k (dz)^k & if $m = -k$, \\
		\left(z^{m/k} + \frac{t}{z}\right)^k (dz)^k & if $m < -k$ and $k \mid m$
	\end{dcases*}
\end{equation}
for some $r \in \bC^\times$ and $t \in \bC$.

\begin{mydef}
        \label{def:kRes}
	For a $k$-differential $\eta$ written as in \eqref{eq:kdifflocal}, the \emph{$k$-residue} of $\eta$ is defined as
	\begin{equation}
		\label{eq:kRes}
		\Res^k_0 \eta :=
		\begin{dcases*}
			0 & if $m > -k$ or $k \nmid m$, \\
			r^k & if $m = -k$, \\
			t^k & if $m < -k$ and $k \mid m$
		\end{dcases*}
	\end{equation}
	for $r$ and $t$ as above.
\end{mydef}

\begin{mydef} \label{def:twisted_k_differential}
	Let $X$ be a nodal curve and let $\mu = (m_1, \dots, m_n)$ be a type.
	A \emph{twisted $k$-differential of type $\mu$} on a stable $n$-pointed curve $(X, \bs)$ is a collection of (possibly meromorphic) $k$-differentials $\eta = \{\eta_v\}_v$ on the irreducible components $X_v$ of $X$ such that no $\eta_v$ is identically zero with the following properties.
	\begin{enumerate}[(i)]
		\item \textbf{(Vanishing as prescribed)} Each $k$-differential $\eta_v$ is holomorphic and nonzero outside the nodes and marked points of $X_v$.
			Moreover, if a marked point $s_i$ lies on $X_v$, then $\ord_{s_i} \eta_v = m_i$.
		\item \textbf{(Matching orders)} For any node of $X$ that identifies $q_1 \in X_{v_1}$ and $q_2 \in X_{v_2}$, the vanishing orders satisfy $\ord_{q_1} \eta_{v_1} + \ord_{q_2} \eta_{v_2} = -2k$.
		\item \textbf{(Matching $k$-residues condition, MRC)} If at a node of $X$ that identifies $q_1 \in X_{v_1}$ with $q_2 \in X_{v_2}$ the condition $\ord_{q_1} \eta_{v_1} = \ord_{q_2} \eta_{v_2} = -k$ holds, then $\Res^k_{q_1} \eta_{v_1} = (-1)^k \Res^k_{q_2} \eta_{v_2}$.
	\end{enumerate}
\end{mydef}

\subsection{Normalized covers}
\label{subsec:normalized_covers}

Consider first a smooth curve with $k$-differential $(X, \eta)$. It admits a canonical cover $\pi : (\hat X, \omega) \to (X, \eta)$ that is unique up to multiplying $\omega$ with a $k$-th root of unity.
In particular, $(\hat X, \omega)$ is unique up to scaling.
The cover $\hat X$ is connected if and only if $\eta$ is a primitive $k$-differential.
If $\eta$ is a $d$-th power of a primitive $k/d$-differential, then $\hat X$ has $d$ isomorphic connected components.

For a twisted $k$-differential a similar cover can be constructed, but no longer uniquely.
Assume that the twisted $k$-differential $\eta$ is of type $\mu = (m_1, \dots, m_n)$, let $\hat m_i := (k+m_i)/\gcd(k,m_i)-1$ and let
\[
        \hat \mu := (\underbrace{\hat m_1, \dots, \hat m_1}_{\gcd(k, m_1)},
                \underbrace{\hat m_2, \dots, \hat m_2}_{\gcd(k, m_2)},
                \dots,
                \underbrace{\hat m_n, \dots, \hat m_n}_{\gcd(k, m_n)}).
\]
Moreover, let $\hat n := |\hat \mu|$ and $\hat g := \frac{1}{2} \sum_{\hat m_i \in \hat \mu} \hat m_i + 1$.

\begin{mythm}[\cite{BCGGM19}]
	For a pointed nodal curve $(X, \bs)$ with a twisted $k$-differential $\eta$ of type $\mu$, there exists a pointed nodal curve $(\hat X, \hat \bs)$ with a twisted abelian differential $\omega$ of type $\hat \mu$ such that
	\begin{enumerate}[(i)]
		\item $\pi : \hat X \to X$ is a cyclic cover of degree $k$ with deck transformation $\tau$,
		\item $\pi^\ast \eta = \omega^k$,
		\item $\tau^\ast \omega = \zeta \omega$ for a primitive $k$-th root of unity $\zeta$,
		\item marked points are mapped to marked points, i.e.\ $\pi(\hat \bs) = \bs$,
		\item $\pi$ is unramified outside of the nodes and marked points of $\hat X$,
		\item every node or marked point $q \in X_v$ has precisely $\gcd(k, \ord_q \eta_v)$ preimages.
	\end{enumerate}
\end{mythm}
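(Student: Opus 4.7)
The plan is to construct the canonical cover on each irreducible component separately and then glue at the nodes. First, for each irreducible component $(X_v, \eta_v)$ I would form the cyclic $k$-th root cover $\pi_v : \hat X_v \to X_v$, obtained locally as the normalization of the equation $w^k = \eta_v$ written against a local coordinate. This is a classical ramified cyclic cover construction producing a meromorphic abelian differential $\omega_v$ on $\hat X_v$ with $\pi_v^* \eta_v = \omega_v^k$ and an obvious $\bZ/k$-deck transformation rotating the fibers. Standard local computations---essentially those that underlie \eqref{eq:kRes}---show that at a point of order $m = dm'$ with $d = \gcd(k, m)$ and $k = dk'$, the preimage consists of $d$ points, each with ramification index $k/d$, at which $\omega_v$ has order $k' + m' - 1 = \hat m$. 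This pins down the type $\hat \mu$ and gives the preimage count (vi). The cover $\hat X_v$ is connected if and only if $\eta_v$ is primitive; otherwise it splits into isomorphic components according to the largest $d_v$ such that $\eta_v$ is a global $d_v$-th power.

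Next I would glue. At a node $q$ identifying $q_1 \in X_{v_1}$ and $q_2 \in X_{v_2}$, the matching orders condition for $\eta$ forces the preimage counts on both sides to agree, and the same local computation shows that for any pairing the resulting orders automatically satisfy $\ord_{\hat q_1}\omega + \ord_{\hat q_2}\omega = -2$. The only freedom is how to bijectively identify the $d = \gcd(k, \ord_{q_1} \eta_{v_1})$ preimages on one side with the $d$ preimages on the other; once a choice is made, the node-neighborhoods are glued accordingly to assemble $\hat X$. Since $\bZ/k$ acts transitively on each set of preimages with stabilizer $\bZ/(k/d)$, equivariant bijections exist in abundance, and I would make choices at every node simultaneously so that the deck transformations on the individual $\hat X_v$ patch to a global $\tau$ with $\tau^* \omega = \zeta \omega$.

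The main obstacle is the residue compatibility at nodes where both $\ord_{q_i} \eta_{v_i} = -k$. At such a residue node $\omega$ acquires simple poles on both sides, and in order for $\omega$ to be a twisted abelian differential one must pair the preimages so that $\Res_{\hat q_1} \omega + \Res_{\hat q_2} \omega = 0$ holds. A short computation, unwinding the local form \eqref{eq:kdifflocal} followed by extraction of a $k$-th root, shows that the residues of $\omega$ on side $i$ form a single $\bZ/k$-orbit $\{\zeta^j r_i\}_j$ for some $r_i$ with $r_i^k = \Res^k_{q_i} \eta_{v_i}$. A cancelling equivariant pairing exists precisely when $-r_1/r_2$ is a $k$-th root of unity, i.e.\ when $r_1^k = (-1)^k r_2^k$, which is exactly the matching $k$-residues condition of Definition~\ref{def:twisted_k_differential}(iii). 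Hence the MRC is precisely the obstruction removed by the hypothesis, and the required global gluing becomes possible. Property (iv) is arranged by declaring $\hat \bs := \pi^{-1}(\bs)$, and (v) is immediate from the construction, which ramifies only at zeros, poles, and nodes of $\eta$.
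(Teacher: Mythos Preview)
The paper does not give its own proof of this statement; it is quoted verbatim as a result of \cite{BCGGM19} and used as a black box. Your outline is essentially the construction carried out in that reference: build the $k$-th root cover component by component, read off the local ramification data to obtain (v), (vi) and the type $\hat\mu$, and then glue equivariantly at the nodes, with the MRC appearing exactly as the obstruction at horizontal nodes. So there is nothing to compare against in this paper, and your sketch is the standard (and correct) argument.
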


\begin{mydef} \label{def:normalized_cover}
	We refer to a tuple $(\pi : \hat X \to X, \bs, \omega)$ as above as a \emph{normalized cover} of $(X, \bs, \eta)$.
	A normalized cover is called \emph{primitive} if $\hat X$ is connected.
\end{mydef}

\begin{myrem}
	\begin{enumerate}
		\item Condition (vi) in the above theorem is well-defined at nodes because the twisted $k$-differential $\eta$ is subject to the matching orders condition of Definition~\ref{def:twisted_k_differential}.
		\item If $\eta_v$ is a $d$-th power of a primitive $k/d$-differential, then the irreducible component $X_v$ has precisely $d$ isomorphic preimages.
		\item In general, the normalized cover is not unique: While the fibers $\pi |_{\pi^{-1}(v)} : \coprod_{\hat v} X_{\hat v} \to X_v$ are uniquely determined by the $k$-differential $\eta_v$, there may be a choice how to glue the different fibers along the nodes.
	\end{enumerate}
\end{myrem}

To determine the relation between the residues of the cover and the $k$-residues of the base curve, let us again consider a normalized cover of smooth curves with differentials $\pi : (\hat X, \omega) \to (X, \eta)$.
Let us fix a pole $p \in X$ of $\eta$ and let $q \in \pi^{-1}(p)$ be some preimage.
If $\pi$ is ramified at $p$, we claim that both the $k$-residue $\Res^k_p(\eta)$ and the residue $\Res_q(\omega)$ vanish.
For the $k$-residue, this is immediate by Definition~\ref{def:kRes}, and for the residue this is a consequence of the compatibility with the $\tau$-action as follows.
If $\pi$ is ramified at $q$, then there is an integer $1 < d \mid k$ such that $q$ is fixed by $\tau^d, \tau^{2d}, \dots, \tau^k$.
Because of $\tau^* \omega = \zeta \omega$, this implies that 
\[ \frac{k}{d} \cdot \Res_q(\omega) = \sum_{i=1}^{k/d} \zeta^{id} \Res_q(\eta) = 0, \]
as the $\frac{k}{d}$-th roots of unity sum to zero.
If the cover is unramified at $p$, then the $k$-residues of the twisted $k$-differential and the residues of the normalized cover are related as follows.

\begin{mylem} \label{lem:induced_k_residues}
	If $\pi$ is unramified at $p$, then
	\[
		\Res^k_p(\eta) = \big(\Res_q(\omega) \big)^k.
	\]
\end{mylem}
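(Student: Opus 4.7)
The plan is to work locally at $p$ and exploit the fact that an unramified cover is a local isomorphism combined with the relation $\pi^* \eta = \omega^k$. First I would choose a local coordinate $z$ at $p$ bringing $\eta$ into one of the three normal forms of \eqref{eq:kdifflocal}, and then lift $z$ via the local analytic isomorphism $\pi|_{q}$ to a local coordinate $\hat z$ at $q$. In these coordinates $\pi^{*}\eta$ has the same expression in $\hat z$ as $\eta$ has in $z$, so the equation $\omega^k = \pi^{*}\eta$ becomes an equation in a single local coordinate, from which $\omega$ can be recovered by extracting a $k$-th root.

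Before the case analysis I would record the observation that being unramified at $q$ forces $k \mid m$, where $m = \ord_p \eta$. Indeed, by property~(vi) of the normalized cover the number of preimages of $p$ equals $\gcd(k, m)$, and the total degree is $k$, so unramified means $\gcd(k, m) = k$. This immediately rules out the sub-case $k \nmid m$ of \eqref{eq:kdifflocal}, and leaves exactly the three cases $m \geq 0$ with $k \mid m$, $m = -k$, and $m < -k$ with $k \mid m$.

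Next I would carry out the three cases directly using the normal forms:
\begin{itemize}
\item If $m \geq 0$ and $k \mid m$, then $\eta = z^{m}(dz)^{k}$, so $\omega = \zeta \hat z^{m/k}\, d\hat z$ for some $k$-th root of unity $\zeta$. Both $\Res^{k}_{p}\eta$ and $\Res_{q}\omega$ vanish.
\item If $m = -k$, then $\eta = (r/z)^{k}(dz)^{k}$, hence $\omega = \zeta(r/\hat z)\, d\hat z$ and $\Res_{q}\omega = \zeta r$, giving $(\Res_{q}\omega)^{k} = r^{k} = \Res^{k}_{p}\eta$.
\item If $m < -k$ and $k \mid m$, then $\eta = \bigl(z^{m/k} + t/z\bigr)^{k}(dz)^{k}$, so $\omega = \zeta\bigl(\hat z^{m/k} + t/\hat z\bigr)\, d\hat z$; reading off the coefficient of $1/\hat z$ yields $\Res_{q}\omega = \zeta t$, hence $(\Res_{q}\omega)^{k} = t^{k} = \Res^{k}_{p}\eta$.
\end{itemize}

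I do not expect a real obstacle here: since $(\Res_{q}\omega)^{k}$ kills the $k$-th root of unity ambiguity coming from the choice of local $k$-th root of $\eta$ (equivalently, from the choice of sheet among the $k$ preimages), the identity holds independently of which branch one picks. The only subtlety worth spelling out is the justification that the three normal forms in \eqref{eq:kdifflocal} really can be integrated to a local primitive $k$-th root, which is a direct computation in each case.
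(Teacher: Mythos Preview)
Your proposal is correct and follows essentially the same approach as the paper: choose a local coordinate putting $\eta$ in the normal form \eqref{eq:kdifflocal}, use that $\pi$ is a local isomorphism so that $\pi^{*}\eta$ has the identical expression, and read off the residue of the $k$-th root. The paper's proof is slightly shorter only because the surrounding context already fixes $p$ to be a pole of $\eta$, making your case $m \geq 0$ vacuous; your explicit tracking of the root-of-unity ambiguity $\zeta$ is a nice touch that the paper leaves implicit.
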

\begin{proof}
	Let $m := \ord_p \eta$ and note that $k$ divides $m$.
	Recall from~\eqref{eq:kdifflocal} that the $k$-form $\eta$ may locally be written as
	\[
		\begin{dcases*}
			\left(\frac{r}{z}\right)^k (dz)^k & if $m = -k$, \\
			\left(z^{m/k} + \frac{t}{z}\right)^k (dz)^k & if $m < -k$.
		\end{dcases*}
	\]
	In these cases, the $k$-residues are by definition $r^k$ and $t^k$, respectively.
	As $\pi$ is locally given by $\pi : z \mapsto z$, we get
	\[
                \omega^k = \pi^*\eta =
		\begin{dcases*}
			\left(\frac{r}{z} dz\right)^k & if $m = -k$ \\
			\left(\left(z^{m/k} + \frac{t}{z}\right) dz\right)^k & if $m < -k$.
		\end{dcases*}
	\]
	Thus the residues of $\omega$ at $q$ are $r$ and $t$, respectively.
\end{proof}

\begin{myrem}
	In general, the $k$-residue does not coincide with the coefficient in front of $z^{-k}$ in the Laurent expansion around the given pole.
	Moreover, there is nothing similar to the residue theorem for $k$-residues with $k > 1$.
\end{myrem}

\subsection{Enhanced level graphs}
\label{subsec:enhanced_level_graph}

The boundary points of the moduli space of multi-scale differentials $\bP \MSD[k]{g}{n}{\mu}$ are normalized covers subject to some conditions on the underlying dual graph of the stable curves.
We will recall here the necessary terminology to give the characterization of the boundary points in Section~\ref{subsec:multi-scale_k-differentials} below.

Let $G$ be a stable graph.
A \emph{full order} on $G$ is an order $\succcurlyeq$ on the vertices $V$ of $G$ that is reflexive, transitive, and such that for any $v_1, v_2 \in V$ at least one of the statements $v_1 \succcurlyeq v_2$ or $v_2 \succcurlyeq v_1$ holds.
If $v_1 \succcurlyeq v_2$ and $v_2 \succcurlyeq v_1$, we write $v_2 \asymp v_1$.
We call a function $\ell : V \to \bZ_{\leq 0}$ such that $\ell^{-1}(0) \neq \emptyset$ a \emph{level function}.
Note that a level function induces a full order on $G$ by setting $v \succcurlyeq w$ whenever $\ell(v) \geq \ell(w)$.
A \emph{level graph} $(G, \ell)$ is a graph $G$ together with a choice of a level function $\ell$.
When the level function is clear from context, we abuse notation and denote the level graph $(G, \ell)$ by $G$ as well.

For a given level $L$ we call the subgraph of $G$ that consists of all vertices $v$ with $\ell(v) > L$ along with the edges between them the \emph{graph above level $L$} of $G$, and denote it by $G_{>L}$.
We similarly define the graph $G_{\geq L}$ \emph{above or at level $L$}, and the graph $G_{=L}$ \emph{at level $L$}.
An edge $e \in E$ is called \emph{horizontal} if it connects two vertices of the same level, and it is called \emph{vertical} otherwise.
Given a vertical edge $e$, we denote by $v^+(e)$ and $v^-(e)$ the vertex that is its endpoint of higher and lower level, respectively.

Let $\pi : \hat G \to G$ be a morphism of graphs. By this we mean that $\pi$ maps vertices to vertices, edges to edges, and legs to legs while respecting edge--vertex and leg--vertex incidences. Assume further that $\pi$ is surjective on vertices and let $\succcurlyeq_G$ denote a full order on $G$. We get an induced full order on $\hat G$ by setting $v_1 \succcurlyeq_{\hat G} v_2$ if and only if $\pi(v_1) \succcurlyeq_G \pi(v_2)$. If $\succcurlyeq_G$ was induced by a level function $\ell$, then $\succcurlyeq_{\hat G}$ is induced by the \emph{lifted level function} $\hat \ell := \ell \circ \pi$.

In the following, given a twisted $k$-differential $(X, \bs, \eta)$ and a level graph $(G, \ell)$, we will always assume that $G$ is the dual graph of $X$.
We denote by $X_{>L}$ (resp.\ $X_{\geq L}$ resp.\ $X_{=L}$) the subcurve whose dual graph is $G_{>L}$ (resp.\ $G_{\geq L}$ resp.\ $G_{=L}$).

\begin{mydef} \label{def:k_cyclic_cover_of_graphs}
	Let $\pi : \hat G \to G$ be a morphism of graphs.  It is called \emph{cover of graphs} if $\pi$ is surjective on vertices, edges, and legs. Furthermore, it is called \emph{$k$-cyclic cover of graphs} if there is the additional data of an automorphism $\tau$ of $\hat G$ such that $\tau^k = \id$ and $\pi$ is the quotient map $\hat G \to \hat G / \tau$.
\end{mydef}

\begin{myrem}
	We would like to stress that morphisms (and covers) of graphs do not contract edges. Also note that in a $k$-cyclic cover of graphs the order of $\tau$ may in fact be $k' < k$ with $k'$ dividing $k$.
	We think of a $k$-cyclic cover of graphs as the dual graphs of a $k$-cyclic cover of curves. Hence the name $k$-cyclic.
\end{myrem}

The following definition is taken from \cite{BCGGM19}.

\begin{mydef} \label{def:GRC}
	Let $\pi : (\hat G, \hat \ell) \to (G, \ell)$ be a $k$-cyclic cover of level graphs.
	We say that a normalized cover of twisted $k$-differential $(\pi : \hat X \to X, \bs, \omega)$ is \emph{compatible} with $\pi$ if it satisfies the following two conditions.
	\begin{enumerate}[(i)]
		\setcounter{enumi}{3}
		\item \textbf{(Partial order)} If a node of $\hat X$ identifies $q_1 \in\hat X_{v_1}$ with $q_2 \in \hat X_{v_2}$, then $v_1 \succcurlyeq v_2$ if and only if $\ord_{q_1} \omega_{v_1} \geq -1$. In particular, $v_1 \asymp v_2$ if and only if $\ord_{q_1} \omega_{v_1} = -1$.
		\item \textbf{(Global residue condition, GRC)} For every level $L$ and every connected component $\hat Y$ of $\hat X_{>L}$ that does not contain a marked point with a prescribed pole the following condition holds:
			Let $q_1, \dots, q_b$ denote the set of all nodes where $\hat Y$ intersects $\hat X_{=L}$. Then
			\[
				\sum_{j=1}^b \Res_{q_j^-} \omega_{v^-(q_j)} = 0.
			\]
			where $q_j^- \in \hat X_{=L}$ is the point on the irreducible component corresponding to $v^-(q_j) \in \hat G_{=L}$ that is part of the node $q_j$.
	\end{enumerate}
\end{mydef}

Note that condition (iv) is equivalent to the analogous condition on the induced twisted $k$-differential $\eta$ on $X$: If a node of $X$ identifies $q_1 \in X_{v_1}$ with $q_2 \in X_{v_2}$, then $v_1 \succcurlyeq v_2$ if and only if $\ord_{q_1} \eta_{v_1} \geq -k$, and $v_1 \asymp v_2$ if and only if $\ord_{q_1} \eta_{v_1} = -k$. \medskip

Though not strictly necessary at the moment, it will be more convenient later on to consider enhanced level graphs instead of level graphs. Enhanced level graphs additionally carry the data of an integer valued function $o$ which should be thought of as an order at every node and marked point.

\begin{mydef} \label{def:enhanced_level_graph}
	Let $k \in \bN_{\geq 1}$.
	A \emph{$k$-enhanced level graph} $G^+ = (V,\; H,\; L,\; \iota,\; a,\; \ell,\; o)$ is a tuple where $(V,\; H,\; L,\; \iota,\; a)$ is a stable graph, the map $\ell : V \to \bZ_{\leq 0}$ is the \emph{level function} and the so-called \emph{enhancement} $o : H \cup L \to \bZ$ such that the following hold.
	\begin{enumerate}[(i)]
		\item The genus is well-defined, i.e.\ for all $v \in V$ there is a non-negative integer $g(v)$ such that
			\[
				k \big( 2g(v) - 2 \big) = \sum_{h \in a^{-1}(v)} o(h).
			\]
		We call $\mu(v) := (o(h))_{h \in a^{-1}(v)}$ the \emph{type of $v$}.
		\item The orders at edges match, i.e. for all $h \in H$ we have $o(h) + o(\iota(h)) = -2k$. \label{enum:ordersmatchedges}
		\item The orders at the half-edges are compatible with the level function, that is: for all $h \in H$ we have $o(h) \geq o(\iota(h))$ if and only if $\ell \big(a(h) \big) \geq \ell \big(a(\iota(h)) \big)$. \label{enum:ordersmatchlevels}
	\end{enumerate}
	Note that \eqref{enum:ordersmatchedges} and \eqref{enum:ordersmatchlevels} imply that the levels at both ends of an edge are equal if and only if the orders at both ends are $-k$.
	We call such an edge \emph{horizontal}.
	Any other edge is called \emph{vertical}.
\end{mydef}

\begin{mydef} \label{def:normalized_cover_enhanced_level_graph}
	Let $G^+ = (V,\; H,\; L,\; \iota,\; a,\; \ell,\; o)$ be a $k$-enhanced level graph.	A \emph{normalized cover} of $G^+$ is a triple $(\widehat G^+, \pi, \tau)$, where
	\begin{enumerate}[(i)]
		\item $\widehat G^+ = (\hat V,\; \hat H,\; \hat L,\; \hat \iota,\; \hat a,\; \hat \ell,\; \hat o)$ is an $1$-enhanced level graph,
		\item $\pi : \hat G^+ \to G^+$ is a cover of graphs such that
		\begin{enumerate}
			\item $\pi$ preserves the levels, i.e.\ $\hat \ell = \ell \circ \pi$,
			\item the order at the preimages is the expected one, i.e.\ for all half-edges and legs $h \in H \cup L$ and all $\hat h \in \pi^{-1}(h)$ it is
			\[
			\hat o(\hat h) + 1 = \frac{o(h) + k}{\gcd \big(o(h), k \big)}
			\]
			\item the number of preimages is the expected one, i.e.\ for all half-edges and legs $h \in H \cup L$ we have
			\[
			\big| \pi^{-1}(h) \big| = \gcd \big(o(h), k \big),
			\]
		\end{enumerate}
		\item $\tau : \hat G \to \hat G$ is a graph automorphism that exhibits $\pi$ as a $k$-cyclic cover of graphs.
	\end{enumerate}
\end{mydef}

Note that the genus of each vertex $\hat v \in \hat V$ is an integer by definition of an $1$-enhanced level graph.

\begin{mydef} \label{def:GRC2}
	Let $\pi : \hat G^+ \to G^+$ be a $k$-cyclic cover of enhanced level graphs.
	We say that a normalized cover of a twisted $k$-differential $(\pi : \hat X \to X, \bs, \omega)$ is \emph{compatible} with $\pi$ if it is compatible with the underlying cover of level graphs $\pi : (\hat G, \hat \ell) \to (G, \ell)$ and the orders of the differentials coincide with the enhancements.
\end{mydef}

\subsection{Multi-scale \texorpdfstring{$k$}{k}-differentials and the characterization of limit points}
\label{subsec:multi-scale_k-differentials}

The points in the boundary of $\bP \MSD[k]{g}{n}{\mu}$ may be described as suitable equivalence classes of the following. 

\begin{mydef} \label{def:multi-scale_k-diff}
	A \emph{multi-scale $k$-differential of type $\mu$} on a pointed stable curve $(X, \bs)$ consists of the following data.
	\begin{enumerate}[(i)]
		\item A primitive normalized cover of a twisted $k$-differential $(\pi : \hat X \to X, \bs, \omega)$ of type $\mu$.
		\item A compatible $k$-cyclic cover of enhanced level graphs $\pi : \hat G^+ \to G^+$.
		\item A prong-matching for each node of $X$ joining components on non-equal levels.
	\end{enumerate}
\end{mydef}

A prong-matching roughly represents a choice of gluing the differentials at the nodes of the curve.
While it is needed to get a well-behaved compactification, it will be of no importance to us and we will suppress it in the following. Similarly, the definition of the equivalence relation of multi-scale $k$-differentials is rather intricate and not relevant for our treatment.

\begin{mythm}[{\cite{CMZ19}}]
	The points in the moduli space of multi-scale $k$-differentials $\bP \MSD[k]{g}{n}{\mu}$ are precisely the $\bC^\times$-equivalence classes of multi-scale $k$-differentials $(\pi : \hat X \to X, \bs, \omega, \pi : \hat G^+ \to G^+)$ of type $\mu$.
\end{mythm}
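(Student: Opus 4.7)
The plan is to bootstrap from the analogous characterization for abelian multi-scale differentials proved in \cite{BCGGM} and exploit the canonical cover construction, which by the theorem stated just above embeds $\bP \MSD[k]{g}{n}{\mu}$ as a suborbifold of $\bP \MSD{\hat g}{\hat n}{\hat \mu}$. Thus it suffices to (a) identify the image of this embedding set-theoretically and (b) verify that a tuple of the form listed in the definition of a multi-scale $k$-differential is exactly the data needed to pin down a point in that image.

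First I would recall from \cite{BCGGM} that points of $\bP \MSD{\hat g}{\hat n}{\hat \mu}$ are $\bC^\times$-classes of tuples $(\hat X, \hat \bs, \omega, \hat G^+, \text{prong-matching})$, where $\omega$ is a twisted abelian differential on the pointed nodal curve $(\hat X, \hat \bs)$ compatible with the level graph $\hat G^+$ in the sense of the partial order condition and the global residue condition of Definition~\ref{def:GRC}. This follows by taking limits along one-parameter families degenerating a smooth pointed curve with abelian differential, and by analyzing which data is preserved; the two nontrivial compatibility conditions express exactly that the residues of $\omega$ at nodes behave consistently when one scales on each level and glues.

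Next I would characterize the image of $\bP \MSD[k]{g}{n}{\mu}$ inside $\bP \MSD{\hat g}{\hat n}{\hat \mu}$. In the interior this image consists of those $(\hat X, \omega)$ which are canonical covers of some primitive $k$-differential $(X, \eta)$, equivalently those endowed with an order-$k$ automorphism $\tau$ such that $\tau^\ast \omega = \zeta \omega$ and such that $\hat X / \tau = X$ recovers $\eta$ through $\omega^k = \pi^\ast \eta$. By closedness of the embedding, a boundary point lies in the image if and only if the extra structure $(\pi, \tau)$ extends to the nodal limit. Concretely, the $\tau$-action on a one-parameter family of smooth canonical covers specializes to a $\tau$ on the nodal limit whose quotient is a pointed nodal curve carrying a twisted $k$-differential $\eta$ with $\pi^\ast \eta = \omega^k$; the local normal forms \eqref{eq:kdifflocal} together with Lemma~\ref{lem:induced_k_residues} then force the orders and the matching $k$-residues condition, so $\eta$ is a genuine twisted $k$-differential in the sense of Definition~\ref{def:twisted_k_differential}, and $(\pi \colon \hat X \to X, \bs, \omega)$ is a primitive normalized cover. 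Moreover, $\tau$ descends to an automorphism of $\hat G^+$ exhibiting $\pi \colon \hat G^+ \to G^+$ as a $k$-cyclic cover of enhanced level graphs, with the enhancements on $G^+$ recording the orders of $\eta$ at nodes and marked points.

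Finally I would check that the compatibility conditions of Definitions \ref{def:GRC} and \ref{def:GRC2} for the $k$-differential are equivalent to the known compatibility conditions for $\omega$ upstairs: the partial order condition pulls back verbatim through $\pi$ since the orders at nodes are determined from those of $\omega$ by $o(h) = k \cdot \hat o(\hat h)/\gcd + \text{correction}$ as in Definition~\ref{def:normalized_cover_enhanced_level_graph}, and the GRC for $\omega$ on $\tau$-invariant unions of components is exactly the GRC for $\eta$ on their images. The prong-matchings on the downstairs curve are precisely $\tau$-orbits of prong-matchings on the upstairs curve, matching the setup in \cite{CMZ19}. The main obstacle is the last step, namely verifying that every tuple satisfying Definitions \ref{def:twisted_k_differential}, \ref{def:normalized_cover}, and \ref{def:GRC2} really arises as a nodal limit inside the suborbifold, rather than merely satisfying the necessary compatibility conditions; this requires a plumbing/model construction in families that deforms the multi-scale data to a smoothing, which is the technical heart of the argument in \cite{BCGGM,CMZ19} and which one would reproduce by working $\tau$-equivariantly on the abelian plumbing construction.
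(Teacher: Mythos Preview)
The paper does not prove this theorem; it is stated there purely as a citation of \cite{CMZ19}, with no accompanying argument. So there is no ``paper's own proof'' to compare against. What the paper does do immediately afterward is restate the result in an alternative form (Theorem~\ref{thm:BCGGM_main_theorem}) emphasizing scaling parameters of one-parameter families, and for that restatement it gives only a two-sentence justification: the forward direction is attributed wholesale to \cite{CMZ19}, and the converse (existence of a smoothing with prescribed deformation parameters) is deduced by recalling from \cite[Theorem~5.2]{MUW17_published} that plumbing imposes only the divisibility constraint \eqref{eq:divcover}, which one satisfies by multiplying the level function by a sufficiently divisible integer $N$.

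Your sketch is a reasonable outline of the strategy actually carried out in \cite{CMZ19}: embed into the abelian multi-scale space via the canonical cover, characterize the image as the $\tau$-equivariant locus, and then argue that the compatibility conditions (matching orders, MRC, partial order, GRC) transfer between $\eta$ downstairs and $\omega$ upstairs. You correctly flag the genuine technical point, namely that surjectivity onto all tuples satisfying the definitions requires an equivariant plumbing construction. But be aware that this is precisely what the paper declines to reproduce; it treats the entire statement as a black box from \cite{CMZ19}, and the only piece it makes explicit is the flexibility in the plumbing parameters needed later for tropicalization.
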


Note that the tuple $(\pi : \hat X \to X, \bs, \omega, \pi : \hat G^+ \to G^+)$ is equivalent to the tuple $(\tau \curvearrowright \hat X, \hat \bs, \omega, \tau \curvearrowright G^+)$, where $\hat \bs$ is the lift of $\bs$ to $\hat X$.
We give another version of the same theorem that highlights the possible scaling parameters of one-parameter families approaching the boundary.

Suppose that $S$ is the spectrum of a discrete valuation ring $R$ with residue field $\bC$, whose maximal ideal is generated by $t$.
Let $\hat \cX/S$ be a family of semi-stable curves with smooth generic fiber $\hat X$ and special fiber $\hat X_0$ and such that there is an automorphism $\boldsymbol \tau$ of degree $k$ on the family $\hat \cX/S$.
Let $\boldsymbol{\omega}$ be a section of the $\zeta_k$-eigenspace (with respect to $\boldsymbol \tau$) of $\omega_{\cX/S}$ of type $\hat \mu = (\hat m_1, \dots, \hat m_{\hat n})$ whose divisor is given by the sections $\hat \bs = (\hat s_1, \dots, \hat s_{\hat n})$ with multiplicity $\hat m_i$.
If moreover $(\hat \cX/S, \hat \bs)$ is stable, then the tuple $(\hat \cX/S, \boldsymbol \tau, \hat \bs, \boldsymbol \omega)$ is called a \emph{pointed family of stable $k$-differentials}.
(Note that $(X := \hat X / \tau, \, \eta := (\boldsymbol \omega|_{\hat X})^{\otimes k} / \tau)$ is in fact a $k$-differential, where $\boldsymbol \omega|_{\hat X}$ is the restriction of $\boldsymbol \omega$ to the generic fiber $\hat X$.)
We define the \emph{scaling factor $\hat \ell(\hat v)$} of a vertex $\hat v$ of the dual graph $\hat G$ of $\hat X_0$ as the non-positive integer such that the restriction of the meromorphic differential $t^{-\hat \ell(\hat v)} \cdot \boldsymbol \omega$ to the component $\hat X_{0,\hat v}$ of the special fiber corresponding to $\hat v$ is a well-defined and generically nonzero differential $\omega_{\hat v}$ on $\hat X_{0,\hat v}$.
The $\omega_{\hat v}$ are called the \emph{scaling limits} of $\boldsymbol \omega$.

\begin{mythm}[\cite{CMZ19}] \label{thm:BCGGM_main_theorem}
	If $(\hat \cX/S, \boldsymbol \tau, \hat \bs, \boldsymbol \omega)$ is as above, then the function $\hat \ell$ defines a full order on the dual graph $\hat G$ of the special fiber $\hat X_0$ and the collection of the $\omega_{\hat v}$ is a normalized cover of a twisted $k$-differential of type $\hat \mu$ compatible with the level function $\hat \ell$.

	Conversely, suppose that $\hat X_0$ is a stable $\hat n$-pointed curve with dual graph $\hat G$ and a degree~$k$ automorphism~$\tau$.
	Moreover, suppose that $\omega = \{\omega_{\hat v}\}_{\hat v \in \hat V}$ is a normalized cover of a twisted $k$-differential of type $\mu$ in the $\zeta_k$-eigenspace of $\tau$ and compatible with a full order on $\hat G$.
	Then for every level function $\hat \ell : \hat G \to \bZ$ defining the full order on $\hat G$ and for every $\tau$-invariant assignment of integers $n_{\hat e}$ to horizontal edges there is a stable family $\hat \cX/S$ over $S = \Spec\bC[[t]]$ with smooth generic fiber and special fiber $\hat X_0$ that satisfies the following properties.
	\begin{enumerate}[(i)]
		\item The action of $\tau$ extends to a degree $k$ automorphism on $\hat \cX /S$.
		\item There exists a global section $\boldsymbol \omega$ of the relative dualizing sheaf $\omega_{\cX/S}$ whose horizontal divisor $\div_{\hor}(\boldsymbol \omega) = \sum_{i=1}^{\hat n} \hat m_i \Sigma_i$ is of type $\hat \mu$ and whose scaling limits are the collection $\{\omega_{\hat v}\}_{\hat v \in \hat V}$.
			Moreover, the restriction of $\boldsymbol \omega$ to each fiber is contained in the $\zeta_k$-eigenspace of $\tau$.
		\item The intersections $\Sigma_i \cap \hat X_0 = \{\hat s_i\}$ are smooth points of the special fiber and $\omega$ has a zero or pole of order $\hat m_i$ in $\hat s_i$.
		\item There exists a positive integer $N$ such that near every node $\hat e$ a local equation for $\cX$ in terms of the uniformizer $t$ is given by
			\[
				xy = \begin{dcases*}
					t^{Nn_{\hat e}} & if $\hat e$ is a horizontal edge, \\
					t^{N \big(\hat \ell(v^+(\hat e))-\hat \ell(v^-(\hat e)) \big)} & if $\hat e$ is a vertical edge.
				\end{dcases*}
			\]
	\end{enumerate}
\end{mythm}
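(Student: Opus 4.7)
The overall plan is to deduce this from the corresponding statement for abelian differentials, namely \cite[Theorem 1.5]{BCGGM}, by exploiting the canonical cover structure recalled in Section~\ref{subsec:normalized_covers}. A family of $k$-differentials is, after all, the same data as a $k$-cyclic family of abelian differentials landing in the $\zeta_k$-eigenspace of the deck transformation $\boldsymbol\tau$, so once the abelian version is in hand the $k$-differential statement follows by tracking $\tau$-equivariance at each step.

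For the forward direction, fix a vertex $\hat v$ of the dual graph $\hat G$ of $\hat X_0$. Since $\boldsymbol\omega$ is a holomorphic section of the relative dualizing sheaf on a flat family, there is a unique largest integer $\hat\ell(\hat v) \in \bZ_{\leq 0}$ such that the meromorphic form $t^{-\hat\ell(\hat v)}\boldsymbol\omega$ has a well-defined and generically nonzero restriction $\omega_{\hat v}$ to $\hat X_{0,\hat v}$. Now work in a local equation $xy = t^a$ of $\hat\cX$ near a node $\hat e$ joining $\hat v^+$ to $\hat v^-$: expanding $t^{-\hat\ell(\hat v^\pm)}\boldsymbol\omega$ in the $x$- and $y$-coordinates reads off both the vanishing orders and, when these are $-1$, the residues at the two branches. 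The matching orders condition is automatic from the structure of the relative dualizing sheaf near the node, while the matching residues condition and the partial-order compatibility follow by comparing leading Laurent coefficients. Taking $\boldsymbol\tau$-invariance into account, the collection $\{\omega_{\hat v}\}$ descends to a twisted $k$-differential of type $\hat\mu$ compatible with $\hat\ell$.

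The converse direction is a plumbing construction and constitutes the technical heart of the theorem. The plan is to start from $\hat X_0 \times S$, excise a neighborhood of each node $\hat e$, and glue in a local model $\Spec \bC[[x,y,t]]/(xy - t^{a_{\hat e}})$ with $a_{\hat e} = N n_{\hat e}$ for horizontal edges and $a_{\hat e} = N(\hat\ell(v^+(\hat e)) - \hat\ell(v^-(\hat e)))$ for vertical edges, where the integer $N$ is chosen large enough to clear denominators appearing in the next step. The $\tau$-invariance of the horizontal parameters $n_{\hat e}$ ensures that $\boldsymbol\tau$ lifts to the total space $\hat\cX$ with the correct order. The global differential $\boldsymbol\omega$ is then assembled by taking $t^{\hat\ell(\hat v)}\omega_{\hat v}$ on each component and, near each node, writing down an explicit meromorphic extension in the coordinates $(x,y,t)$ whose restrictions to the two branches recover the prescribed scaling limits.

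The main obstacle will be getting these local extensions to patch into an honest \emph{global} section of $\omega_{\hat\cX/S}$. This is not a local question at individual nodes: modifying the local model at one node of a connected subcurve above a given level forces compensating modifications at the other nodes where the subcurve meets lower-level components, and the resulting global compatibility is controlled by a sum of residues taken around each connected component of $\hat X_{0,>L}$ for each level $L$. The vanishing of every such sum is exactly the GRC, so after a careful residue accounting the GRC turns out to be equivalent to the existence of the desired global section. A secondary point is that $\boldsymbol\omega$ has to land in the $\zeta_k$-eigenspace of $\boldsymbol\tau$, which follows from the $\tau$-equivariance of the input twisted differential and of the plumbing data. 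Once existence of $\boldsymbol\omega$ is established, the remaining assertions (i)--(iv) on smoothness of the generic fiber, location of the zeros, and shape of local equations at nodes are immediate from the construction.
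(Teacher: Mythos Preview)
Your outline is a reasonable sketch of what the underlying arguments in \cite{BCGGM} and \cite{CMZ19} actually do, but it is worth noting that the paper itself does not reproduce that argument. The paper's proof is essentially a citation: the forward direction is deferred entirely to \cite{CMZ19}, and for the converse the paper recalls from the proof of \cite[Theorem~5.2]{MUW17_published} the single technical constraint governing the plumbing, namely that at a vertical node $\hat e$ the level function used for plumbing must satisfy
\[
(\ord_{v^+(\hat e)} \hat\omega + 1) \;\big|\; \hat\ell_0(v^+(\hat e)) - \hat\ell_0(v^-(\hat e)),
\]
while horizontal nodes impose no constraint. The integer $N$ is then chosen so that $\hat\ell_0 = N\cdot\hat\ell$ satisfies all of these divisibilities simultaneously.

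This is the one place where your proposal is genuinely less precise than the paper. You write that $N$ is ``chosen large enough to clear denominators appearing in the next step,'' which gestures at the right phenomenon but does not identify it. The denominators in question are not arbitrary: the local plumbing model at a vertical node with enhancement $m = \ord_{v^+(\hat e)}\hat\omega$ forces the exponent of $t$ to be divisible by $m+1$, and this is exactly the condition above. Everything else in your outline --- the local node analysis for the forward direction, the role of the GRC as the global obstruction to patching, and the $\tau$-equivariance bookkeeping --- is correct in spirit and matches what the cited references carry out in detail.
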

\begin{proof}
	The first statement is proved in \cite{CMZ19}. Note that the arguments given there hold over any discrete valuation ring with residue field $\bC$.

	For the second statement we recall from the proof of \cite[Theorem~5.2]{MUW17_published} that there are no constraints for the plumbing fixtures used on horizontal nodes, whereas for a vertical node corresponding to an edge $\hat e$ the level function $\hat \ell_0$ on the cover used for plumbing has to satisfy the condition
	\begin{equation}
		\label{eq:divcover}
		(\ord_{v^+(\hat e)} \hat \omega + 1) \ \big\vert \  \big(\hat \ell_0 (v^+(\hat e)) - \hat \ell_0 (v^-(\hat e))\big).
	\end{equation}
	Multiplying the prescribed function $\hat \ell$ by a sufficiently divisible $N$, the resulting level function $\hat \ell_0 = N \cdot \hat \ell$ satisfies the divisibility property.
\end{proof}

\subsection{Empty primitive strata}
\label{subsec:empty_strata}

The primitive strata of $k$-differentials $\Omega^k \cM_{g}(\mu)^{\prim}$ are empty for some types $\mu$.
To keep the notation concise, we will denote a type $\mu = (m_1, \dots, m_n)$ where multiple $m_i$ agree with exponential notation, e.g.\ we will denote the type $(0, \dots, 0)$ by $(0^n)$.

\begin{mythm}[\cite{GT21}, \cite{GT_quadratic}, \cite{GT_kdiff}] \label{thm:empty_primitive_strata}
	The primitive stratum $\Omega^k \cM_{g}(\mu)^{\prim}$ (and hence the stratum of multi-scale $k$-differentials $\PMSD[k]{g}{n}{\mu}$) is empty if and only if we are in one of the following case.
	\begin{enumerate}[(i)]
		\item $k = 1$ and $\mu = (-1, m_2, \dots, m_n)$ with $m_2, \dots, m_n \geq 0$.
		\item $g = 0$ and $\mu = (m_1, \dots, m_n)$ with $\gcd(m_1, \dots, m_n, k) \neq 1$.
		\item $g = 1$ and $\mu = (0^{n-2},-1,1)$.
		\item $g=1$, $k \geq 2$ and $\mu = (0^n)$.
		\item $g = 2$, $k = 2$ and $\mu = (0^{n-1},4)$ or $\mu = (0^{n-2},1,3)$.
	\end{enumerate}
\end{mythm}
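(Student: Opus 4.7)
The plan is to split the biconditional into necessity (the listed types yield empty strata) and sufficiency (every other type admits a primitive $k$-differential). Necessity in each case reduces to a classical obstruction, while sufficiency requires an explicit construction for every remaining $\mu$.

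For necessity, case (i) is immediate from the residue theorem: an abelian differential has residues summing to zero, but a single simple pole contributes a nonzero residue. Case (ii) on $\bP^1$ follows from $\operatorname{Pic}(\bP^1) \cong \bZ$: if $d := \gcd(m_1, \ldots, m_n, k) > 1$ then any $k$-differential with divisor $\sum m_i p_i$ must be the global $d$-th power of some $k/d$-differential, so the primitive stratum is empty. Case (iv) on a genus $1$ curve follows from triviality of the canonical bundle: every holomorphic $k$-differential is a scalar multiple of the $k$-th power of a fixed nonvanishing $1$-form and is thus imprimitive for $k \geq 2$. Cases (iii) and (v) are subtler. For (iii) one uses the Riemann-Hurwitz formula applied to the putative canonical cover $\pi : \hat X \to X$ of Section~\ref{subsec:normalized_covers}: the ramification data forced by $\mu = (0^{n-2}, -1, 1)$ contradicts connectedness of $\hat X$. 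For (v) one uses that every smooth genus $2$ curve is hyperelliptic: the zero structure $(0^{n-1},4)$ or $(0^{n-2},1,3)$ of a quadratic differential together with the hyperelliptic involution forces it to arise as the square of an abelian differential pulled back along the hyperelliptic quotient, hence imprimitive.

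For sufficiency, one constructs for each non-excluded type a primitive $k$-differential on some smooth curve of the prescribed genus. Following \cite{GT20}, the cleanest approach is via polygon gluings of flat surfaces matching the prescribed cone angles; primitivity is verified by constructing the canonical cover $\hat X \to X$ explicitly and checking that it is connected. In higher genus one may also degenerate inductively to a nodal curve whose components carry primitive differentials of smaller type, verify the global residue condition of Definition~\ref{def:GRC}, and smooth via Theorem~\ref{thm:BCGGM_main_theorem}, taking care that the smoothing preserves primitivity (which can be arranged by choosing the degeneration so that the monodromy of $\pi$ forces connectedness of the smoothed cover).

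The main obstacle is the analysis of cases (iii) and (v), whose obstructions are neither local like (i) nor a direct Picard-group count like (ii) but instead reflect a subtle interplay between the canonical cover of Section~\ref{subsec:normalized_covers} and low-genus geometry; ruling out further low-genus anomalies requires a careful case-by-case check. A secondary difficulty on the sufficiency side is maintaining primitivity across a smoothing or gluing argument, since a naive construction may inadvertently produce a $k$-differential that is a global $d$-th power.
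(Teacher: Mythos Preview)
The paper does not prove this theorem: it is quoted verbatim as \cite[Th\'{e}or\`{e}me~1.4]{GT20} and used as a black box, so there is no proof in the paper to compare against. Your sketch is therefore an attempt to reconstruct the argument of Gendron--Tahar rather than of the present paper.

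That said, a few comments on the sketch itself. The necessity arguments for (i), (ii), and (iv) are correct. Your argument for (iii) is off: the canonical cover and Riemann--Hurwitz are not the natural tools here. Since $\omega_X^{\otimes k}$ is trivial on an elliptic curve, a $k$-differential is just a meromorphic function, and a function with exactly one simple zero $p$ and one simple pole $q$ would force $p \sim q$, hence $p = q$ by the group law; the extra $0^{n-2}$ marked points are irrelevant. For (v), the hyperelliptic involution argument is the right idea, but it requires care: one must show that a quadratic differential on a genus~$2$ curve with a single zero of order~$4$ (or zeros of orders $1$ and $3$) is automatically invariant under the hyperelliptic involution and hence descends to $\bP^1$, making it a square; this is not automatic from hyperellipticity alone and uses the specific zero patterns.

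On the sufficiency side your outline matches the strategy of \cite{GT20}: flat-geometric constructions via polygon gluings, with connectedness of the canonical cover checked by hand in low complexity and propagated by degeneration/smoothing in general. The caveat you flag about primitivity surviving smoothing is exactly the delicate point, and the full case analysis is lengthy; your sketch correctly identifies where the work lies but does not carry it out.
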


\begin{myrem}
	A stratum $\Omega^k \cM_g(\mu)$ may be nonempty even if its primitive part $\Omega^k \cM_{g}(\mu)^{\prim}$ is empty: For each $d \mid k$, the stratum $\Omega^k \cM_g(\mu)$ may have nonempty connected components that parametrize $d$-th powers of primitive $k/d$-differentials.
\end{myrem}

\subsection{The image of the residue map}
\label{subsec:image_residue_map}

In Section~\ref{sec:realizability_locus}, when we prove Theorem~\ref{intro:thm:main_theorem}, we will first translate a given tropical normalized cover into a normalized cover of enhanced level graphs $\pi : \hat G^+ \to G^+$ and then try to construct a normalized cover of a twisted $k$-differential $(X, \eta)$ that is compatible with $\pi$.
If this is possible, then the irreducible component $(X_v, \eta_v)$ is a (possibly meromorphic) $k$-differential for each $v \in V(G^+)$, whose type $\mu(v)$ is prescribed by the enhancements of $G^+$.
To construct the twisted $k$-differential, we will need to fix the $k$-residues at the poles of $\eta_v$.
The question which $k$-residues are valid choices was answered by Gendron-Tahar.
We will summarize their results in this section.

\begin{mydef} \label{def:reduced_type}
	For a type $\mu = (m_1, \dots, m_n)$, we define the \emph{reduced type} $\mu_\red$ as the sub-tuple of $\mu$ consisting of all nonzero entries.
	Following~\cite{GT_kdiff}, we denote this tuple by
	\[
	\mu_\red = (a_1, \dots, a_t; -b_1, \dots, -b_p; -c_1, \dots, -c_r; -k^s)
	\]
	where the $a_i > -k$, the $b_i \in k \bN_{>1}$ are the poles where the order is greater than $k$ and divisible by $k$, and the $c_i \in \bN_{> k} \setminus (k \bN)$ are the poles with order not divisible by $k$.
	As above, the power $-k^s$ indicates that there are $s$ poles of order $k$.
\end{mydef}

Recall that the $k$-residues at the poles with orders $-c_i$ are zero, while the $k$-residues at the poles with orders $-k$ cannot be zero.
For $k\geq 2$ we let
\[
	\Res^k_g(\mu_\red) : \Omega^k \cM_g(\mu_\red)^{\prim} \longrightarrow \bC^p \times (\bC^\times)^s
\]
denote the residue map.
For $k=1$, the possible residues are restricted by the residue theorem and hence in this case we define the reisude map as
\[
	\Res^1_g(\mu_\red) : \Omega \cM_g(\mu_\red) \longrightarrow
	\left\{(r_1,\dots,r_{p+s})\in \bC^p \times (\bC^\times)^s \; \middle| \; \sum_{i=1}^{p+s}r_i = 0\right\}.
\]
For almost all reduced types $\mu_\red$, the residue map is surjective.
In the rest of this section, we will discuss all the cases where the residue map is not surjective.
The following proposition lists all those cases where exactly the origin is missing in the image of the residue map.

\begin{myprop}[\cite{GT21}, \cite{GT_quadratic}, \cite{GT_kdiff}]
	\label{prop:residue_map_origin_missing}
	In the following cases, precisely the origin is missing from the image of the residue map.
	\begin{enumerate}[(i)]
                \item If $k = 1$, $g=0$, $s=0$ and there exists an index $i$ such that the inequality
                        \[
                                    a_i > \sum_{j=1}^p b_j - (p+1)
                        \]
                        holds.
                        (Note that $k=1$ implies $r=0$.)                    
		\item If $k=2$, $g=1$ and $\mu_\red = (4p;(-4^p))$ or $\mu_\red = (2p-1,2p+1;(-4^p))$ for $p \in \bN^\times$.
		\item If $k \geq 2$, $g=0$, $\mu_\red = (a_1, \dots, a_t; -b_1, \dots, -b_p; -c_1)$ and there is at most one $a_i$ not divisible by $k$ and $\sum_{k \, \mid \, a_i} a_i < kp$.
		\item If $k \geq 2$, $g = 0$, $\mu_\red = (a_1, \dots, a_t; -b_1, \dots, -b_p)$, $p \neq 0$ and \emph{none} of the following holds:
			\begin{enumerate}
				\item $p = 1$ and $t \geq 3$,
				\item $p \geq 2$, $t \geq 3$ and there exist at least three $a_i$ not divisible by $k$,
				\item $p \geq 2$, $t \geq 3$ and there exist precisely two $a_i$ not divisible by $k$ and $\sum_{k \, \mid \, a_i} a_i \geq kp$,
                                \item $k=2$ and $\mu_\red = ((2p+b-5)^2;-b,-b-2,(-4^{p-2}))$ or $\mu_\red = (2p+b-7, 2p+b-5;(-b^2),(-4^{p-2}))$ for $p \geq 2$ and even $b \geq 4$.
			\end{enumerate}
	\end{enumerate}
\end{myprop}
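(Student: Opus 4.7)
The plan is to reduce the problem of realizing prescribed $k$-residues to the corresponding problem for abelian residues of differentials on the canonical cover, via the construction of Section~\ref{subsec:normalized_covers}. Given a primitive canonical cover $\pi : (\hat X, \omega) \to (X, \eta)$ and an unramified pole $p$ of $\eta$, Lemma~\ref{lem:induced_k_residues} gives $\Res^k_p(\eta) = (\Res_q \omega)^k$ at each preimage $q$. Hence the $k$-residue map factors as the ordinary residue map of $\omega$ on $\hat X$ followed by componentwise $k$-th power, and the only obstructions to surjectivity come from (a) the residue theorem on each connected component of $\hat X$, and (b) the $\tau$-equivariance $\tau^* \omega = \zeta_k \omega$. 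The strategy is then a case-by-case analysis verifying that in each listed situation these two constraints rule out exactly the origin.

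For case (i), where $k = 1$, the canonical cover is trivial and the analysis is on $\bP^1$ directly. By Riemann--Roch, the space of meromorphic differentials on $\bP^1$ with poles of order at most $b_j$ at fixed points has dimension $\sum b_j - 1$. Imposing a zero of order at least $a_i$ at an additional fixed point cuts this by $a_i$ linear conditions on the local Taylor expansion, and the residue map takes values in the zero-sum hyperplane in $\bC^p$. I would show that the image is the full punctured hyperplane precisely when $a_i > \sum b_j - (p+1)$ via an explicit partial-fractions construction: whenever at least one $r_j$ is nonzero one builds a suitable rational $1$-form by hand, while the dimensional inequality forces the kernel of the residue map to be exactly the locus where all residues vanish.

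For cases (ii)--(iv) with $k \geq 2$, I would compute the canonical cover explicitly using the recipe in Section~\ref{subsec:normalized_covers}: the lifted type is determined by $\hat m_i = (k + m_i)/\gcd(k, m_i) - 1$ with $\gcd(k, m_i)$ preimages, and the genus $\hat g$ follows. In each case the cover has tractable genus --- genus $1$ in case (ii) and genus $0$ in cases (iii), (iv) --- and the combined linear constraints imposed by the residue theorem on each connected component of $\hat X$ together with $\tau$-equivariance cut out a linear subspace of the target residue space. Pushing forward through the componentwise $k$-th power map then yields the image in $\bC^p$, and I would verify that in each listed case it equals $\bC^p \setminus \{0\}$. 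The elliptic situations in (ii) in particular reduce to checking that a single global residue constraint on $\hat X$ becomes consistent as soon as the target vector is nonzero.

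The main obstacle will be case (iv), which features four explicit sub-exclusions. The sharp dichotomy between ``image equals $\bC^p \setminus \{0\}$'' and ``image is strictly smaller'' depends delicately on how many of the $a_i$ are divisible by $k$ and on the comparison $\sum_{k \mid a_i} a_i < kp$. I expect the core of the argument to be a $\tau$-equivariant Riemann--Roch count for the $\zeta_k$-eigenspace of meromorphic differentials on $\hat X \cong \bP^1$, followed by an analysis of which residue coordinates this eigenspace can independently vary. The sporadic $k = 2$ family in (iv)(d) appears to require a separate, hands-on construction, writing down explicit differentials and verifying their $k$-residues directly.
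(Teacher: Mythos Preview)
This proposition is not proved in the paper at all: it is stated with attribution to \cite{GT20} and simply summarizes results of Gendron--Tahar, so there is no proof here to compare your proposal against. The paper uses these facts as a black box when defining inconvenient vertices in Section~\ref{sec:realizability_locus}.

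That said, a few remarks on your sketch. The idea of passing to the canonical cover and reducing to abelian residues via Lemma~\ref{lem:induced_k_residues} is reasonable in spirit, and indeed Gendron--Tahar exploit the canonical cover; but their actual arguments are predominantly flat-geometric (saddle connections, core of a translation surface, surgeries), not the purely linear Riemann--Roch count you outline. Two specific points where your sketch would need more care: first, your claim that the canonical cover has genus $0$ in cases (iii) and (iv) is not obvious and would require a Riemann--Hurwitz computation tailored to the divisibility hypotheses in each case --- it is not automatic from $g=0$ on the base. Second, in case (i) you (correctly) note the residue theorem forces the image into the zero-sum hyperplane in $\bC^p$, which is in mild tension with the displayed statement ``image equals $(\bC^\times)^p$''; the intended content is ``precisely the origin is excluded from the image'', and the image genuinely lies in the hyperplane for $k=1$. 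In any event, the detailed case analysis, especially the sharp dichotomies in (iv) and the sporadic $k=2$ families, is the substance of several papers \cite{GT20,GT21,GT_quadratic} and is well beyond what a self-contained argument of the type you sketch could deliver.
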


In the cases of the following two propositions not only the origin, but a finite number of $\bC$-lines is missing from the image of the residue map.

\begin{myprop}[\cite{GT_quadratic}, \cite{GT_kdiff}]
	\label{prop:residue_map_line_missing}
	For the reduced types $\mu_\red$ in Figure~\ref{fig:residue_map_line_missing}, precisely the $\bC$-lines spanned by the vectors $w_i$ are missing from the image of the residue map, i.e.
	\[
		\Im \big(\Res^k_g(\mu_\red) \big) = 
		\big( \bC^p \times (\bC^\times)^s \big) \setminus \bigcup_i \langle w_i \rangle_{\bC}.
	\]
	(Note that if there are multiple poles with the same order in $\mu_\red$, then the order of the entries of the vectors $w_i$ may not be uniquely determined. In those cases all possible permutations need to be taken into account.)
\end{myprop}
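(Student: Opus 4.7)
My approach would combine the canonical cover construction of Section~\ref{subsec:normalized_covers} with the residue theorem applied on each connected component of $\hat X$. By Lemma~\ref{lem:induced_k_residues}, at any unramified pole the $k$-residue of $\eta$ equals the $k$-th power of the residue of $\omega$ at a preimage, and the equivariance $\tau^\ast \omega = \zeta \omega$ propagates this upstairs data $\tau$-equivariantly. Consequently, the image of $\Res^k_g(\mu_\red)$ is, up to a coordinatewise $k$-th power map, a linear slice of $\bC^{\hat n}$ cut out by the residue relations on the components of $\hat X$.

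For the obstruction direction I would, for each reduced type in Figure~\ref{fig:residue_map_line_missing}, first determine the combinatorics of the canonical cover (its genus, type, and ramification profile) using the recipe recalled in Section~\ref{subsec:normalized_covers}. In all of the listed cases the cover has a rational component that forces a nontrivial linear relation among residues of $\omega$ at preimages of the marked poles. Transferring this relation downstairs via the $k$-th power map yields a union of $\bC$-lines in $\bC^p \times (\bC^\times)^s$ that cannot be realized; matching the resulting list to the explicit vectors $w_i$ in the figure is then a direct combinatorial computation, sensitive to the ambiguity in ordering whenever several poles share the same order (which is the content of the parenthetical remark in the statement).

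For the realizability direction, given a prescribed residue vector not lying on any $\langle w_i \rangle_\bC$, I would construct an explicit primitive $k$-differential realizing it. In the genus zero cases this amounts to writing $\eta = P(z)(dz)^k/Q(z)^k$ for polynomials $P, Q$ of the correct degrees and solving the $k$-residue equations: the non-proportionality to any $w_i$ is precisely what guarantees the resulting nonlinear system is consistent, while primitivity follows by checking that the solution is not a global $d$-th power, using Theorem~\ref{thm:empty_primitive_strata} to rule out degenerate components of the stratum. In cases with $g \geq 1$ such as item (ii) of Proposition~\ref{prop:residue_map_origin_missing}, I would work on the canonical cover directly, realizing the required residues in a higher-dimensional classical stratum $\Omega \cM_{\hat g}(\hat \mu)$ where surjectivity onto the residue locus is already known, and then descend to $X$ via the $\tau$-quotient.

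The main obstacle is the sharpness of the statement: one must show that the missing locus is exactly $\bigcup_i \langle w_i \rangle_\bC$, neither more nor less. This forces a careful case-by-case analysis of the exceptional low-genus reduced types in the figure, where several residue relations on the canonical cover can a priori conspire. Verifying that outside the identified lines these relations impose no further constraint after the $k$-th power map is taken — and that on each $\langle w_i \rangle_\bC$ the obstruction truly persists for every nonzero scalar multiple, rather than only for a discrete subset — is where the bulk of the technical work lies.
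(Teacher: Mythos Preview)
The paper does not prove this proposition at all: it is stated with the attribution \cite{GT20} and no argument is given. It is quoted as a black-box input from Gendron--Tahar, alongside Propositions~\ref{prop:residue_map_origin_missing}, \ref{prop:residue_map_abelian_coprime}, and \ref{prop:residue_map_triangular}, all of which are likewise imported without proof. So there is nothing in the paper to compare your proposal against.

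As for your sketch itself: the canonical-cover-plus-residue-theorem idea is a plausible heuristic for why certain residue vectors are excluded, and it is indeed one ingredient in \cite{GT20}. But you should be aware that the actual arguments there rely heavily on flat-geometric techniques (local models of $k$-differentials as flat cone surfaces, surgeries gluing half-planes and cylinders, saddle connection counts) rather than purely algebraic manipulations of the cover. In particular, your claim that ``in all of the listed cases the cover has a rational component that forces a nontrivial linear relation'' is not obviously true as stated---several entries in Figure~\ref{fig:residue_map_line_missing} have connected covers of positive genus, and the obstruction does not always reduce to a single residue-theorem identity on a $\bP^1$ component. Your realizability direction is also more delicate than you suggest: writing down $P(z)(dz)^k/Q(z)^k$ and ``solving the $k$-residue equations'' hides the fact that the $k$-residue is not the coefficient of $z^{-k}$ (see the remark after Lemma~\ref{lem:induced_k_residues}), so the system is not polynomial in the obvious coordinates. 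The sharp determination of which lines are missing, case by case, is genuinely the content of \cite{GT20} and is not something that falls out of the cover construction alone.
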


\begin{figure}[ht]
	\centering
	\begin{minipage}{0.9\textwidth}
		\centering
		\renewcommand{\arraystretch}{1.2}
		\begin{tabular}{r r l | l}
			&& $\mu_\red$ & $w_i$ \\\hline
			\multirow{10}{*}{\rotatebox[origin=c]{90}{$k=2$}} & \multirow{2}{*}{\rotatebox[origin=c]{90}{$g=1$}} & $(2s;(-2^s))$ for $s \in 2\bN^\times$ & $(1,\dots,1)$ \\\cline{3-4}
			&& $(s-1,s+1;(-2^s))$ for $s \in 2\bN^\times$ & $(1,\dots,1)$ \\\cline{2-4}
			& \multirow{8}{*}{\rotatebox[origin=c]{90}{$g=0$}} & $(s-1,s+1;-4;(-2^s))$ for $s \in 2\bN^\times$ & $(0;1,\dots,1)$  \\\cline{3-4}
			&& $(2p-1,2p+1;(-4^p);(-2^2))$ for $p \geq 0$ & $(0, \dots, 0;1,1)$  \\\cline{3-4}
			&& $(s^2;-4;(-2^s))$ for $s \in (\bN^\times \setminus 2\bN)$ & $(1;1,\dots,1)$  \\\cline{3-4}
			&& $((2p-1)^2;(-4^p);-2)$ for $p \geq 1$ & $(1, 0, \dots, 0;1)$  \\\cline{3-4}
			&& \begin{tabular}{@{}l@{}}$((2p+b-5)^2;-b,-b-2,(-4^{p-2}))$ \\ \hspace{1cm} for $p \geq 2$ and even $b \geq 4$\end{tabular} & $(1,1,0, \dots ,0)$  \\\cline{3-4}
			&& \begin{tabular}{@{}l@{}}$(2p+b-7, 2p+b-5;(-b^2),(-4^{p-2}))$ \\ \hspace{1cm} for $p \geq 2$ and even $b \geq 4$\end{tabular} & $(1,1,0, \dots ,0)$  \\\hline
			\multirow{6}{*}{\rotatebox[origin=c]{90}{$k=3$}} & \multirow{6}{*}{\rotatebox[origin=c]{90}{$g=0$}} & $(-1,4;(-3^3))$ & $(1^3)$ \\\cline{3-4}
			&& $(1,2;(-3^3))$ & $(1^3)$ \\\cline{3-4}
			&& $(2,4;(-3^4))$ & $(1^2,-1^2)$ \\\cline{3-4}
			&& $(2,7;(-3^5))$ & $(1^4,-1)$ \\\cline{3-4}
			&& $(2,10;(-3^6))$ & $(1^6)$ \\\cline{3-4}
			&& $(5,7;(-3^6))$ & $(1^6)$ \\\hline
			\multirow{4}{*}{\rotatebox[origin=c]{90}{$k=4$}} & \multirow{4}{*}{\rotatebox[origin=c]{90}{$g=0$}} & $(-1,5;(-4^3))$ & $(1^2,-4)$ \\\cline{3-4}
			&& $(3,5;(-4^4))$ & $(1^4)$ \\\cline{3-4}
			&& $(-1,9;(-4^4))$ & $(1^4)$ \\\cline{3-4}
			&& $(3,13;(-4^6))$ & $(1^6)$ \\\hline
			\multirow{2}{*}{\rotatebox[origin=c]{90}{$k=6$}} & \multirow{2}{*}{\rotatebox[origin=c]{90}{$g=0$}} & $(-1,7;(-6^3))$ & $(1^3)$ \\\cline{3-4}
			&& $(-1,13;(-6^4))$ & $(1^4)$ \\\hline
			\rotatebox[origin=c]{90}{$k \geq 2$} & \rotatebox[origin=c]{90}{$g=0$} & $(-1,1;(-k^2))$ & $(1,(-1)^k)$
		\end{tabular}
		\caption{Reduced types $\mu_\red$ and generators $w_i$ of the $\bC$-lines missing in the image of the residue map.}
		\label{fig:residue_map_line_missing}
	\end{minipage}
\end{figure}

\begin{myprop}[\cite{GT21}]
	\label{prop:residue_map_abelian_coprime}
        For $k=1$, $g=0$ and $\mu_\red = (a_1, \dots, a_t; (-1^s))$ with $s \geq 2$, precisely those $\bC$-lines $\big\langle (x_1, \dots, x_{s_1}, -y_1, \dots, -y_{s_2}) \big\rangle_{\bC}$ are missing from the image of the residue map for which the $x_i,y_j \in \bN$ are pairwise relatively prime and
        \[
                \sum_{i=1}^{s_1} x_i = \sum_{j=1}^{s_2} y_j \leq \max(a_1, \dots, a_t).
        \]
\end{myprop}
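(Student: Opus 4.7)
The plan is to reduce the problem to a classical question about ramified covers $f \colon \bP^1 \to \bP^1$, exploiting the fact that on $\bP^1$ an abelian differential whose residues all lie on a common $\bC$-line of rational slope is essentially the logarithmic derivative of a rational function. First, the residue theorem confines the image of $\Res^1_0(\mu_\red)$ inside the hyperplane $H := \{r \in \bC^s \mid \sum r_i = 0\}$, and every candidate vector $w = (x_1, \dots, x_{s_1}, -y_1, \dots, -y_{s_2})$ with $\sum x_i = \sum y_j$ already lies in $H$. Thus the entire discussion takes place in $H \cap (\bC^\times)^s$.

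The first main step is the obstruction direction. Fix a line $L = \langle w \rangle_\bC$ with $w$ as above, pairwise coprime entries, and $D := \sum x_i = \sum y_j \leq \max_i a_i$, and assume for contradiction that some $\eta \in \Omega\cM_0(\mu_\red)^{\prim}$ has residue vector $cw$ for a scalar $c \in \bC^\times$. Then $\eta / c$ has integer residues, so its formal antiderivative exponentiates to a single-valued meromorphic function $f$ on $\bP^1$; in other words, $f$ is a rational function of degree $D$ with a zero of order $x_i$ at the $i$-th positive-residue pole of $\eta$ and a pole of order $y_j$ at the $j$-th negative-residue pole, and $\eta = c \cdot df/f$. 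The zeros of $df/f$ away from the zeros and poles of $f$ are exactly the critical points of $f$ with critical value in $\bC^\times$, and at each such point the local ramification index of $f$ equals one plus the vanishing order of $\eta$. Because any ramification index of $f$ is bounded by $\deg f = D$, this forces $\max_i a_i + 1 \leq D$, contradicting our hypothesis.

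For the converse direction I would show that every other line in $H \cap (\bC^\times)^s$ is realized. For a rational line generated by a vector of the stated form but with $D > \max a_i$, the same correspondence $\eta \leftrightarrow f$ reduces existence of a realizing $\eta$ to existence of a rational map $f \colon \bP^1 \to \bP^1$ of degree $D$ with ramification profiles $(x_1, \dots, x_{s_1})$ over $0$, $(y_1, \dots, y_{s_2})$ over $\infty$, and $t$ further simple branch points with ramification indices $a_1+1, \dots, a_t+1$; the Riemann--Hurwitz identity is automatic from $\sum a_k = s - 2$, and the pairwise coprimality of the $x_i, y_j$ is precisely what enables the construction of a transitive factorization of the identity in $S_D$ with these cycle types, so that the relevant genus-$0$ Hurwitz number is positive. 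For lines whose primitive integer generator fails pairwise coprimality, and for irrational lines, I would deploy a density argument: the image on the connected stratum $\bP\Omega\cM_0(\mu_\red)^{\prim}$ is constructible, contains the Zariski-dense collection of non-obstructed rational directions just constructed, and therefore coincides with the complement of the finite family of obstructed lines.

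The main obstacle I expect is the Hurwitz-theoretic realization step in the regime where $D$ is only slightly larger than $\max_i a_i$: here the combinatorics of the required factorization is tight, and one must simultaneously produce the prescribed cycle types over $0$ and $\infty$ together with long cycles of length $a_k+1$ while keeping the monodromy group transitive. The pairwise coprimality condition plays its structural role at precisely this point, ruling out those arithmetic configurations in which the required factorization would necessarily collapse to a disconnected or imprimitive cover. Once this combinatorial input is secured, combining it with the Riemann--Hurwitz obstruction and the density argument should yield the full characterization claimed in the proposition.
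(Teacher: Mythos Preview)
The paper does not give its own proof of this proposition: it is quoted from \cite{GT21} without argument, like the neighboring Propositions on the image of the residue map. So there is no ``paper's proof'' to compare against; what can be said is how your sketch relates to the argument in Gendron--Tahar.

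Your reduction to rational maps is the right mechanism. On $\bP^1$, an abelian differential with only simple poles and integer residues summing to zero is exactly $df/f$ for a rational $f$, and the dictionary ``zero of $\eta$ of order $a$ $\leftrightarrow$ critical point of $f$ of local degree $a+1$'' gives the bound $a_i + 1 \le \deg f = D$, hence the obstruction. Two points deserve attention.

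First, your obstruction argument never uses \emph{pairwise} coprimality --- it applies to any primitive integer vector. In fact the argument shows that for instance the line $\langle(2,2,-1,-1,-1,-1)\rangle$ is missing for type $(4;-1^6)$, even though this vector is not pairwise coprime. This is consistent with your logic and with the correspondence $\eta \leftrightarrow f$; the hypothesis that actually matters is $\gcd(x_1,\dots,x_{s_1},y_1,\dots,y_{s_2})=1$, not pairwise coprimality. So either the statement as recorded here is slightly imprecise, or there is a further subtlety you would need to address; in any case you should flag that your obstruction does not match the stated hypothesis.

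Second, the realization direction is where the genuine work lies, and your sketch only asserts it. Producing a degree-$D$ map $\bP^1 \to \bP^1$ with profiles $(x_i)$ over $0$, $(y_j)$ over $\infty$, and single cycles of length $a_k+1$ over further branch points is a Hurwitz existence problem; Riemann--Hurwitz holding is necessary but not sufficient, and transitivity of the monodromy is exactly the delicate point. Gendron--Tahar's proof proceeds via flat geometry rather than a direct combinatorial factorization, and the density/constructibility argument you propose for irrational lines also needs care (the image of the residue map is not obviously constructible without further input). These are the places where a full proof would need real content beyond the outline you give.
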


Finally, there are some cases where a finite number of at most 2-dimensional subvarieties is missing from the image of the residue map.
For $k=2$, following \cite[D\'efinition~1.8]{GT_quadratic} we call three numbers $R_1, R_2, R_3 \in \bC^\times$ \emph{triangular}, if there exist square roots $r_1, r_2, r_3$ of $R_1, R_2, R_3$ such that $r_1 + r_2 + r_3 = 0$.

\begin{myprop}[\cite{GT_quadratic}]
	\label{prop:residue_map_triangular}
	For $k=2$, $g=0$ and $\mu_\red = (a_1, \dots, a_t; (-2^s))$, precisely the following $\bC$-lines are not in the image of the residue map.
	\begin{enumerate}[(i)]
		\item \label{missing_planes}
		For $\mu_\red = (2s'-1,2s'+1;(-2^{2s'+2}))$ with $s' \in \bN$ the lines spanned by $(1, \dots, 1, R, R)$ for $R \in \bC^\times$ are missing.
		\item 
		\label{missing_cones}
		For $\mu_\red = ((2s'-1)^2; (-2^{2s'+1}))$ with $s' \in \bN^\times$ the lines spanned by $(R_1, R_2, R_3, \dots, R_3)$ for triangular $R_i \in \bC^\times$ are missing.
		\item If precisely two $a_i$ are odd (say $a_1$ and $a_2$), the lines spanned by $(r_1^2, \dots, r_s^2)$ for relatively prime $r_i \in \bN$ and such that for $S := \sum_i r_i$ either
			\begin{enumerate}
				\item $S$ is odd and $S < \max(a_1, a_2) + 2$ or
				\item $S$ is even and $S < a_1 + a_2 + 4$
			\end{enumerate}
			are missing.
	\end{enumerate}
\end{myprop}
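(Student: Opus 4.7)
\emph{Proof proposal.}
My approach is to reduce each case to a question about residues of abelian differentials on the canonical double cover $\pi : \hat X \to X = \bP^1$. For $k = 2$, the cover $\pi$ is ramified exactly at the odd-order special points of $\eta$, and the canonical abelian differential $\omega$ on $\hat X$ satisfies $\tau^*\omega = -\omega$. Consequently the two preimages of each double pole of $\eta$ carry residues $\pm r_i$ of $\omega$ with $r_i^2 = R_i$. In all three cases of the proposition at most two entries of $\mu_\red$ are odd, so a Riemann--Hurwitz computation yields $\hat g = 0$ with $\tau$ acting on $\bP^1$ with two fixed points.

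For case (iii) (precisely two odd $a_i$), the cover is $\bP^1$ with $2s$ simple poles arranged as the $\tau$-orbit of an $s$-tuple. The residue tuple $(r_1, \ldots, r_s, -r_1, \ldots, -r_s)$, rescaled to a primitive integer representative, inherits a $\tau$-symmetric structure; the coprimeness of the $r_i \in \bN$ is then the natural $\tau$-equivariant refinement of the condition appearing in Proposition~\ref{prop:residue_map_abelian_coprime}. The split between sub-cases (a) and (b) according to the parity of $S = \sum r_i$ tracks whether the sum bound is dictated by only one of the two ramification zeros (odd $S$, bound $\max(a_1,a_2)+2$) or by a combined argument using both (even $S$, bound $a_1+a_2+4$). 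The second bound is sharper than what a naive application of Proposition~\ref{prop:residue_map_abelian_coprime} to the cover alone would give, and requires exploiting the doubled symmetry of the configuration.

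For cases (i) and (ii), I would choose a coordinate $z$ on $\hat X = \bP^1$ so that $\tau : z \mapsto -z$, with fixed points $0$ and $\infty$. The relation $\tau^*\omega = -\omega$ then forces $\omega = g(z^2)\, dz$ for a rational function $g$ of $w := z^2$. The prescribed zero orders at the ramification points determine the valuations of $g$ at $w=0$ and $w=\infty$, while the poles of $g$ at $w = q_j$ produce pole pairs $\pm\sqrt{q_j}$ of $\omega$. A partial-fraction computation expresses the residues $r_j$ explicitly as rational functions of $u_j := \sqrt{q_j}$. For case (ii) with $s'=1$, I expect
\[
r_1 + r_2 + r_3 = \frac{-1}{2(u_1+u_2)(u_1+u_3)(u_2+u_3)},
\]
which never vanishes for distinct $u_i$ with $u_i+u_j \neq 0$. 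Systematically flipping the signs of individual $u_j$'s cycles through all eight square-root combinations $\pm r_1 \pm r_2 \pm r_3$ and produces analogous nonvanishing expressions, showing that no triangular configuration is realized. Case (i) proceeds analogously with unequal zero orders $2s'$ and $2s'+2$, yielding an explicit defining equation whose vanishing locus is the family $\langle (1,\ldots,1,R,R)\rangle_R$.

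The main obstacle is the converse direction in case (ii), together with its generalization to $s' \geq 2$ where $R_3$ is repeated $2s'-1$ times: showing that every non-triangular line is in fact realized. This amounts to proving surjectivity onto the complement of an explicit polynomial map between two-dimensional parameter spaces, and requires an inverse-problem argument reconstructing the pole positions $q_j$ from a desired residue tuple $(R_1, R_2, R_3)$. The triangular condition would then emerge as the exact vanishing locus of the discriminant-type quantity obstructing this inverse, and this sharp matching of obstruction and image is, I expect, the essential content of~\cite{GT_quadratic}.
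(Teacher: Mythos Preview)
The paper does not prove this proposition: it is quoted verbatim from the forthcoming work \cite{GT_quadratic} of Gendron--Tahar, and no argument is supplied in the present paper. So there is nothing to compare your proposal against here; any proof you give goes strictly beyond what the paper does.

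That said, your strategy is the natural one and is consistent with how the surrounding propositions (also quoted from \cite{GT20}, \cite{GT21}, \cite{GT_quadratic}) are obtained: pass to the canonical double cover, which for exactly two odd $a_i$ has genus zero, and translate the $2$-residue constraints into residue constraints for the $\tau$-anti-invariant abelian differential $\omega$ on $\hat X \cong \bP^1$. Your reduction of case~(iii) to a $\tau$-equivariant refinement of Proposition~\ref{prop:residue_map_abelian_coprime} is on the right track, and the explicit partial-fraction computation you outline for case~(ii) with $s'=1$ is essentially how the triangular obstruction arises. You correctly identify the real work as the surjectivity direction for general $s'$, which is indeed the substance of \cite{GT_quadratic} and not something the present paper attempts.
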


\begin{mythm}[\cite{GT21}, \cite{GT_quadratic}, \cite{GT_kdiff}]
	For $k \geq 1$, the residue map is surjective in all cases not covered by Propositions \ref{prop:residue_map_origin_missing}, \ref{prop:residue_map_line_missing}, \ref{prop:residue_map_abelian_coprime} and \ref{prop:residue_map_triangular}.
\end{mythm}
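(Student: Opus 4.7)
The plan is to compile the results of \cite{GT20}, \cite{GT21}, and \cite{GT_quadratic} into a single statement by case analysis on $k$ and $g$, verifying that the four preceding propositions exhaust the obstructions to surjectivity of $\Res^k_g(\mu_\red)$.

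First I would separate the abelian case $k=1$ from the case $k \geq 2$. For $k=1$ the classical residue theorem forces the sum of residues to vanish, so the image lies in a hyperplane. A standard Mittag-Leffler type argument then produces an abelian differential realizing any vector in this hyperplane, except on $\bP^1$ where only simple poles appear; there the pairwise-coprime arithmetic obstruction of Proposition \ref{prop:residue_map_abelian_coprime} arises, as established in \cite{GT21}. The genus-zero obstruction recorded in part (i) of Proposition \ref{prop:residue_map_origin_missing} for $k=1$ reflects the impossibility of realizing the zero residue vector when the order data forces the differential to vanish identically by a Riemann-Roch count.

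For $k \geq 2$ the main tool is the canonical cover $\pi : (\hat X, \omega) \to (X, \eta)$ of Section \ref{subsec:normalized_covers}. By Lemma \ref{lem:induced_k_residues}, the $k$-residues of $\eta$ at unramified poles are $k$-th powers of the corresponding residues of $\omega$. Producing a primitive $k$-differential with prescribed $k$-residues thus reduces to producing a $\tau$-equivariant abelian differential on the cover $\hat X$ whose residues satisfy the appropriate $k$-th power relations and the ordinary residue theorem. For sufficiently flexible strata, such an $\omega$ is constructed by flat-surface surgeries --- bubbling, slit constructions, and breaking along saddle connections --- following the methods of \cite{GT20}, and the image of $\Res^k_g(\mu_\red)$ is then as large as possible.

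The main obstacle is the exhaustive enumeration of the low-complexity obstructions. Three families must be isolated: the ``origin'' obstruction of Proposition \ref{prop:residue_map_origin_missing}, arising when all residues vanishing would force the entire differential to vanish; the finitely many $\bC$-line obstructions of Proposition \ref{prop:residue_map_line_missing}, which occur in small $k$ and small $g$ where the canonical cover has low genus and $\tau$-equivariance forces the residues to lie along a distinguished line; and the triangular obstructions of Proposition \ref{prop:residue_map_triangular}, coming from translating quadratic differentials on $\bP^1$ to abelian differentials on their hyperelliptic double cover, where the square-root relations among residues impose a triangle condition on their square roots. The delicate part is verifying completeness: for every reduced type $\mu_\red$ avoiding these four lists, one must exhibit a direct construction --- typically by gluing simpler flat models or by a residue-preserving deformation within the stratum --- realizing any prescribed residue vector, and these constructions occupy the bulk of \cite{GT20}, \cite{GT21}, and \cite{GT_quadratic}.
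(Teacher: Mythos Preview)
The paper does not prove this theorem at all: it is stated purely as a citation of results from \cite{GT20}, \cite{GT21}, and \cite{GT_quadratic}, with no accompanying argument. There is therefore no ``paper's own proof'' to compare your proposal against. What you have written is a high-level reconstruction of the strategy behind those external papers, not a comparison target.

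That said, one point in your sketch deserves scrutiny. You write that for $k=1$ the residue theorem forces the image into a hyperplane, and then appeal to a Mittag-Leffler argument to hit that hyperplane. But the theorem as stated asserts surjectivity onto the full target $\bC^p \times (\bC^\times)^s$, not onto a hyperplane. Either the codomain is implicitly restricted in the abelian case (a convention you would need to make explicit), or your sketch is in tension with the statement you are trying to prove. If you intend this as an actual proof rather than a pointer to the literature, you would need to resolve this discrepancy and, more substantially, actually carry out the flat-surface constructions and the exhaustive case analysis you allude to --- which is the real content of the cited papers and is far from a routine exercise.
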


	%%%%%%%%%%%%%%%%%%%%%%%%%%%%%%%%%%%%%%%%%%%%%%%%%%%%%%%%%%%%%%%%%%%%%%%%%%%%%%%%%%%%%%%%%%%%%%%%%%%%%%%%
\section{Reduction to realizability of normalized covers}
\label{sec:reduction}
%%%%%%%%%%%%%%%%%%%%%%%%%%%%%%%%%%%%%%%%%%%%%%%%%%%%%%%%%%%%%%%%%%%%%%%%%%%%%%%%%%%%%%%%%%%%%%%%%%%%%%%%

The goal of this section is to reduce the realizability problem for curves with pluri-canonical divisor to the realizability problem for normalized covers, i.e. we want to formally state and prove Corollary~\ref{intro:cor:reduction}. To do so, we define a notion of tropical normalized cover in Definition~\ref{def:tropical_normalized_cover} which should be thought of as a tropical version of Definition \ref{def:normalized_cover}. Essentially, we require a tropical normalized cover to be a tropical unramified $\bZ/k\bZ$-cover $\hat\Gamma \to \Gamma$ in the sense of \cite{LUZ24} with the additional property that the underlying cover of graphs admits the structure of a normalized cover of enhanced level graphs in the sense of Definition \ref{def:normalized_cover_enhanced_level_graph}. Of course, the enhancement should be compatible with the divisor marked by the legs of $\Gamma$. More precisely,  Lemma~\ref{lem:construction_of_k-enhancement} endows a tropical curve with effective pluri-canonical divisor with the structure of a $k$-enhanced level graph. In the definition of a tropical normalized cover we will then require the structure of normalized cover of enhanced level graphs to coincide on $\Gamma$ with the output of Lemma~\ref{lem:construction_of_k-enhancement}. 

Once the notion of tropical normalized cover is introduced, we can construct the moduli space of tropical normalized covers $\tropMSD$ and the tropicalization map
\[ \trop_{\Xi^k} : \bP \Omega^k \cM_{g, n}^\an \longrightarrow \tropMSD \]
which is really a special case of the map defined in \cite[Section~3.3]{LUZ24}.
Finally, we prove Theorem~\ref{intro:thm:tropMSD} and perform the reduction step.

\subsection{From rational functions to $k$-enhanced level graphs}

So far we have considered tropical curves $\Gamma$ together with an effective pluri-canonical divisor $D \in |kK_\Gamma|$ on one hand and on the other hand $k$-enhanced level graphs $G^+$.
These notions are related by

\begin{mylem} \label{lem:construction_of_k-enhancement}
	Let $\Gamma$ be a tropical curve and $D = k K_\Gamma + (f) \in |k K_\Gamma|$ an effective pluri-canonical divisor. Let $G$ be the minimal graph model of $\Gamma$ where $D$ is represented with legs. We can associate a natural $k$-enhanced level graph structure $G^+ = G^+(f)$ on $G$.
\end{mylem}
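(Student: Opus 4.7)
The plan is to specify both a level function $\ell$ and an enhancement $o$ on $G$ directly from $f$, and then to verify the three axioms of Definition~\ref{def:enhanced_level_graph}. For the level function I would use that $f$ takes only finitely many distinct values on the finite vertex set $V$: enumerating these values in decreasing order and assigning the integer labels $0, -1, -2, \dots$ produces a map $\ell : V \to \bZ_{\leq 0}$ with $\max \ell = 0$ and $\ell(v_1) \geq \ell(v_2)$ if and only if $f(v_1) \geq f(v_2)$. The induced full order on $V$ is independent of the particular integer labeling.

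For the enhancement I would set $o(l) := 1$ for every leg $l$ (each leg represents a single chip of the effective divisor $D$), and for every half-edge $h$ at a vertex $v = a(h)$ I would define
\[
o(h) := -k - \sigma_h,
\]
where $\sigma_h \in \bZ$ is the outgoing slope of $f$ at $v$ along the edge containing $h$. This specific affine expression is the unique one compatible with the three axioms, as the next step will make clear.

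The verifications are then direct. For axiom~(ii), since $f$ is affine on each edge of $G$ (chips appear only at vertices of the chosen graph model), the outgoing slopes at the two endpoints of an edge are opposite, $\sigma_h + \sigma_{\iota(h)} = 0$, and hence $o(h) + o(\iota(h)) = -2k$. For axiom~(iii), $o(h) \geq o(\iota(h))$ is equivalent to $-\sigma_h \geq \sigma_h$, i.e.\ $\sigma_h \leq 0$, which in turn is exactly the condition that $f$ does not increase when leaving $a(h)$, equivalently $\ell(a(h)) \geq \ell(a(\iota(h)))$. For axiom~(i), using $(f)(v) = \sum_{h \text{ half-edge at } v} \sigma_h$, the relation $D(v) = kK_\Gamma(v) + (f)(v)$, the formula $K_\Gamma(v) = 2g(v) - 2 + \valence_\Gamma(v)$ with valence taken in $\Gamma$ (ignoring legs), and the fact that the number of legs at $v$ equals $D(v)$ by construction of $G$, one computes
\[
\sum_{h \in a^{-1}(v)} o(h) = -k\,\valence_\Gamma(v) - (f)(v) + D(v) = k\bigl(2g(v) - 2\bigr).
\]
The only real obstacle is guessing the correct form of $o(h)$ at the outset: once the ansatz $-k - \sigma_h$ is written down, (ii) fixes the matching constant, (iii) fixes the sign of $\sigma_h$, and (i) confirms the additive normalization via the pluri-canonical equation. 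With these choices in place the construction is visibly canonical in $f$, so the assignment $f \mapsto G^+(f)$ is well-defined.
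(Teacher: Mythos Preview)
Your proof is correct and follows the same approach as the paper: the paper defines the full order via $f$ and the enhancement by $o(h) = -s(h) - k$ on half-edges and $o(l) = 1$ on legs, then simply states that the axioms of Definition~\ref{def:enhanced_level_graph} are easy to check. Your version is in fact more complete, since you explicitly produce a level function (the paper only gives the full order, though the definition asks for $\ell$) and you actually carry out the verifications, including the key observation that $f$ is affine on each edge of $G$ because all slope-change points are already vertices.
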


\begin{proof}
	Let $G$ be the minimal model of $\Gamma$ including $D(p)$ legs for every point $p \in \supp D$. Note that $G$ is always stable, even when we are subdividing an edge of the minimal model of $\Gamma$ at an interior point: the fact that we are introducing legs to represent support points of $D$ ensures that the new vertex will be at least trivalent. To define the enhanced level graph structure, we first endow $G$ with the total order given by $f$, i.e.\ for vertices $v$ and $w$ we set $v \preccurlyeq w$ if and only if $f(v) \leq f(w)$. 	
	Next we define a $k$-enhancement $o : H \cup L \to \bZ$ compatible with the total order by the following rule
	\begin{equation} \label{eq:def_enhancement}
		o(h) := \begin{cases}
			1 & \text{if } h \text{ is a leg} \\
			-s(h) - k & \text{if } h \text{ is part of an edge},
		\end{cases}
	\end{equation}
	where $s(h)$ is the outgoing slope of $f$ on the half-edge $h$. It is easy to check Definition~\ref{def:enhanced_level_graph}.
\end{proof}

\begin{myrem} \label{rem:realization_in_different_strata}
	Our choice to represent the effective divisor $D$ by attaching $D(p)$ legs at each point $p \in \Gamma$ and endowing these with $o$-value 1 amounts to seeking realizations in the principal stratum $\bP \Omega^k \cM_g(1, \ldots, 1)$ in the end. Of course one could also ask for realizability in other strata -- the definition in \eqref{eq:def_enhancement} would then have to be adapted.
\end{myrem}

\subsection{Tropical $k$-cyclic Hurwitz covers}
\label{sec:tropical_Hurwitz_covers}

A tropical Hurwitz cover is a harmonic morphism of tropical curves satisfying the local Riemann-Hurwitz conditions. The moduli space of such maps of fixed degree $d$ and ramification profile $\xi$ was introduced in \cite{CMR16}. For our purposes we need the slightly modified notion of tropical $k$-cyclic Hurwitz covers. See \cite{LUZ24} and the references therein for an overview of previous appearances of such objects in the literature. 

\begin{mydef}
	Let $\Gamma'$ and $\Gamma$ be tropical curves. A morphism of metric graphs $\varphi : \Gamma' \to \Gamma$ is called \emph{morphism of tropical curves} if it maps edges of $\Gamma'$ linearly to edges of $\Gamma$ such that the ratio of edge lengths $d_{e'} := \frac{l(\varphi(e'))}{l(e')}$ is an integer for every edge $e'$ of $\Gamma'$. In this case the numbers $d_{e'}$ are called \emph{expansion factors}.
\end{mydef}

\begin{mydef}
	Let $\varphi : \Gamma' \to \Gamma$ be a morphism of tropical curves, $p' \in \Gamma'$ and $p = \varphi(p')$. Then $\varphi$ is called \emph{harmonic at $p'$} if for every tangent direction $\epsilon \in T_p(\Gamma)$ to $p$ in $\Gamma$ the value of the \emph{local degree}
	\[ d_{p'} := \sum_{\substack{\epsilon' \in T_{p'}(\Gamma') \\ \epsilon' \mapsto \epsilon}} d_{\epsilon'} \]
	does not depend on $\epsilon$. Here the sum is running over all tangent directions to $p'$ that map to $\epsilon$. A morphism is \emph{harmonic} if it is surjective and harmonic at every $p' \in \Gamma'$. In this case the number $d = \sum_{p' \in f^{-1}(p)} d_{p'}$ is independent of $p$ and is called \emph{degree of $\varphi$}.
\end{mydef}

\begin{mydef}
	A harmonic morphism $\varphi : \Gamma' \to \Gamma$ is called \emph{tropical Hurwitz cover} if for every $p' \in \Gamma'$ the \emph{local Riemann-Hurwitz condition} holds, i.e.
	\[ 2 - 2g(p') = d_{p'}\big(2 - 2g(\varphi(p'))\big) - \sum_{\substack{h' \text{ half-edge} \\ \text{incident to } p'}} (d_{h'} - 1) \ . \]
\end{mydef}

We remark that tropical Hurwitz covers are not covers in the sense of topology, i.e.\ they are not local isomorphisms in general.

\begin{mydef}
	Let $\pi : \Gamma' \to \Gamma$ be a tropical Hurwitz cover and let $k \geq 1$ be an integer. We call an automorphism of metric graphs $\tau : \Gamma' \to \Gamma'$ a \emph{(tropical) deck transformation} if it is an isometry and $\pi$ is $\tau$-invariant. The data of $\pi$ together with $\tau$ is called \emph{tropical $k$-cyclic Hurwitz cover} if 
	\begin{enumerate}[(i)]
		\item $\pi$ is of degree $k$,
		\item the morphism of graphs underlying $\pi$ is a $k$-cyclic cover of graphs in the sense of Definition~\ref{def:k_cyclic_cover_of_graphs} (with deck transformation given by the morphism of graphs underlying $\tau$), and
		\item $\Gamma = \Gamma' / \tau$.
	\end{enumerate}
\end{mydef}

\begin{myrem}
	In a tropical $k$-cyclic Hurwitz cover the deck transformation $\tau$ satisfies necessarily $\tau^k = \id$. This however does not mean that $\tau$ is of degree $k$. Rather we only have that the degree of $\tau$ divides $k$. Put differently: we merely store the data of a generator of $\bZ/k\bZ$ acting (not necessarily freely) on $\hat \Gamma$.
\end{myrem}

We will now construct the moduli space of tropical $k$-cyclic Hurwitz covers in analogy to the moduli space of tropical Hurwitz covers (see \cite[Section 3.2]{CMR16}). This construction has been alluded to in \cite[Section~3.3]{LUZ24}. It is also related the cone over the so-called symmetric $\Delta$-complexes which have been studied in \cite{BCK23} or \cite{MHRSY24}. The construction follows the same pattern as the construction of $M_{g,n}^\trop$ in Section~\ref{subsec:moduli_tropical_cuves}. 

Fix a degree $k$, genera $g'$ and $g$, and a tuple of ramification profiles $\xi = (\xi_1, \ldots, \xi_n)$, i.e.\ each $\xi_i$ is a partition of $d$. 
Consider a tropical $k$-cyclic Hurwitz cover $(\pi : \Gamma' \to \Gamma, \tau)$ with the specified parameters. In particular, $\Gamma$ has to have precisely $n$ legs $l_1, \ldots, l_n$ such that the leg $l_i$ has $|\xi_i|$ preimages with expansion factors given by the entries of $\xi_i$. 
Then the \emph{combinatorial type} of $\pi$ is the underlying $k$-cover of weighted graphs $(\pi : G' \to G, \tau)$ together with the data of all the expansion factors. Here we denote by $G'$ and $G$ the minimal graph models for $\Gamma'$ and $\Gamma$ respectively. We describe a category $\cJ_{g' \to g, k}(\xi)$ as follows. Objects are combinatorial types. Morphism are commutative diagrams of the form
\begin{equation*}
	\begin{tikzcd}
		G'_1 \arrow[rr, "f'"] \arrow[dd, "\pi_1"] \arrow[rd, "\tau_1"] && G'_2 \arrow[dd, near end, "\pi_2"'] \arrow[rd, "\tau_2"] & \\
		& G'_1 \arrow[dl, "\pi_1"] \arrow[rr, crossing over, near start, "f'"] && G'_2 \arrow[dl, "\pi_2"] \\
		G_1 \arrow[rr, "f"] && G_2 &
	\end{tikzcd}
\end{equation*}
where the maps $f'$ and $f$ are either graph automorphism respecting expansion factors or $f$ is an edge contraction. Now associate to each combinatorial type $p$ the rational polyhedral cone $\sigma_p := \bR_{\geq 0}^{E(G)}$. This cone parametrizes the set of tropical $k$-cyclic Hurwitz covers with underlying combinatorial type $p$ (note that edge lengths on $G$ determine edge lengths on $G'$). Finally the moduli space is defined as
\[ H_{g' \to g,k}^\trop(\xi) := \varinjlim\limits_{\cJ_{g' \to g, k}(\xi)} \sigma_p.\] 

In \cite[Definition~25]{CMR16} the authors define a tropicalization from the analytification of Hurwitz space to the tropical Hurwitz space. A modified version of this construction in the presence of a group action was introduced in \cite[Section~3.3]{LUZ24}. Given a Hurwitz cover $X' \to X$ defined over a non-Archimedean field, its tropicalization is a tropical Hurwitz cover $\Gamma' \to \Gamma$ where $\Gamma'$ is the tropicalization of $X'$ in the sense of curves and $\Gamma$ the tropicalization of $X$. More precisely, the Hurwitz space $\compact{\mathcal{H}}_{g' \to g, k}(\xi)$  comes with two natural forgetful maps,
\begin{align*}
\tgt :& \compact{\mathcal{H}}_{g' \to g, k}(\xi) \longrightarrow \compact{\cM}_{g,n}, &\quad (X' \to X) &\longmapsto X \\
\src :& \compact{\mathcal{H}}_{g' \to g, k}(\xi) \longrightarrow \compact{\cM}_{g',n'}, &\quad (X' \to X) &\longmapsto X'
\end{align*}
called \emph{target} and \emph{source map} respectively. We abuse notation and denote the corresponding maps on $H_{g'\to g, k}^\trop(\xi)$ with the same symbols. By \cite[Theorem 4]{CMR16} tropicalization (in the sense of \cite[Definition 25]{CMR16}) and source (resp. target) map commute. 
We now define the tropicalization map for the moduli space of $k$-cyclic Hurwitz covers 
\[ \trop_H : \mathcal{H}_{g' \to g, k}(\xi)^\an \longrightarrow H^\trop_{g' \to g, k}(\xi) \]
simply by tropicalizing any $X' \to X$ and adding the induced tropical deck transformation $\tau$ to the data. With this definition, compatibility with source and target map remains true (compare \cite[Theorem~3.5]{LUZ24}).

\begin{myprop} \label{prop:tropicalization_Hurwitz_commutes_source_target}
	Set $n' := \sum |\xi_i|$. Then the following diagram commutes.
	\begin{equation*}
		\begin{tikzcd}
			\mathcal{H}_{g' \to g, k}(\xi)^\an \arrow[dd, "\src^\an"] \arrow[rd, "\trop_H"] \arrow[rr, "\tgt^\an" ]&& \cM^\an_{g, n} \arrow[d, "\trop"] \\
			& H^\trop_{g' \to g, k}(\xi) \arrow[d, "\src"] \arrow[r, "\tgt"] & M_{g, n}^\trop \\
			\mathcal{M}_{g',n'}^\an \arrow[r, "\trop"] & M_{g',n'}^\trop \ . & 
		\end{tikzcd}
	\end{equation*}
\end{myprop}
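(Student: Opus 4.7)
The plan is to reduce to the analogous commutativity statement \cite[Theorem~4]{CMR16} for the ordinary (non-cyclic) tropical Hurwitz space, since $\trop_H$ has been \emph{defined} to agree with the construction there on the level of the underlying cover $\pi:\Gamma'\to\Gamma$, with the deck transformation $\tau$ merely added as extra data. Split the pentagon into the outer source square, the upper target triangle, and the interior target triangle, and observe that source and target maps on $H^\trop_{g'\to g,k}(\xi)$ forget $\tau$ by definition. Thus each half of the diagram reduces to the corresponding commutative square in \cite[Theorem~4]{CMR16}.

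More concretely, first I would address the \textbf{target square}. For a point $[X'\to X]\in\cH_{g'\to g,k}(\xi)^\an$ with deck transformation $\sigma$, unwinding the definitions gives $\tgt\circ\trop_H([X'\to X])=\Gamma=\trop(X)=\trop\circ\tgt^\an([X'\to X])$, where the middle equality is exactly what \cite[Theorem~4]{CMR16} provides. Then for the \textbf{source square}, the same reasoning gives $\src\circ\trop_H=\trop\circ\src^\an$ once one knows $\trop_H$ induces the correct tropical curve $\Gamma'$ on the source; this is again \cite[Theorem~4]{CMR16}.

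The only genuinely new point, and thus the main obstacle, is checking that the definition of $\trop_H$ is well-posed, i.e.\ that the pair $(\pi:\Gamma'\to\Gamma,\tau)$ obtained by tropicalizing $(X'\to X,\sigma)$ really is a tropical $k$-cyclic Hurwitz cover in the sense of our definition. Functoriality of tropicalization yields an isometric automorphism $\tau$ of $\Gamma'$ with $\tau^k=\id$ and $\pi\circ\tau=\pi$. That the induced morphism of dual graphs underlying $\pi$ is a $k$-cyclic cover of graphs follows because the special fiber of the stable model of $X'$ carries the action of $\sigma$ and its quotient is the special fiber of the stable model of $X$ (after suitable base change), so $\Gamma=\Gamma'/\tau$ as metric graphs. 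I would verify this compatibility using the explicit description of the stable model recalled in Section~\ref{subsec:tropicalization} together with the fact that finite group quotients commute with formation of the stable model in characteristic~$0$.

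Finally, once $\trop_H$ is known to land in $H^\trop_{g'\to g,k}(\xi)$ and to commute with $\src$ and $\tgt$ after forgetting $\tau$, the commutativity of the full pentagon follows immediately: both the outer triangle (source) and the upper triangle (target) are obtained by pasting the verified square with the identity on the extra $\tau$-data.
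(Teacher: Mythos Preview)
Your proposal is correct and follows essentially the same approach as the paper: the paper's proof consists of the single observation that $\trop_H$ commutes with forgetting the deck transformation by definition, so commutativity reduces directly to \cite[Theorem~4]{CMR16}. Your additional discussion of well-posedness of $\trop_H$ (that the result is genuinely a tropical $k$-cyclic Hurwitz cover) goes beyond what the proposition asserts; the paper treats this as implicit in the definition of $\trop_H$ given just before the statement.
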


\begin{proof}
	By definition $\trop_H$ commutes with forgetting the deck transformation. The claim then follows from \cite[Theorem 4]{CMR16}.
\end{proof}

\subsection{Tropical normalized covers}

From now on fix the tuple of ramification profiles to be $\xi = ((k), \ldots, (k))$ with $n = k(2g-2)$ entries.

\begin{mydef} \label{def:tropical_normalized_cover}
	Let $(\pi : \hat\Gamma \to \Gamma, \tau)$ be a tropical $k$-cyclic Hurwitz cover with ramification profiles $\xi = ((k), \ldots, (k))$. Assume that the effective divisor $D \in \Div(\Gamma)$ given by the legs of $\Gamma$ is pluri-canonical, i.e.\ $D = kK_\Gamma + (f)$. 
	Let $\Gamma^+$ denote the $k$-enhanced level graph structure on $\Gamma$ induced by Lemma \ref{lem:construction_of_k-enhancement}. 
	We say that $\pi$ is a \emph{tropical normalized cover} if $\hat \Gamma$ admits a structure $\hat \Gamma^+$ of an 1-enhanced level graph such that $\hat \Gamma^+ \to \Gamma^+$ is a normalized cover of enhanced level graphs in the sense of Definition \ref{def:normalized_cover_enhanced_level_graph}. 
	
	Define $\tropMSD$ as the locus of tropical normalized covers in $H_{g' \to g, k}^\trop((k), \ldots, (k)) / S_n$ with $n = k(2g-2)$.
\end{mydef} 

Our definition of tropical normalized cover is a tropical analog of Definition \ref{def:normalized_cover} in the sense that the legs of $\hat \Gamma$ encode a canonical divisor, see Lemma \ref{lem:def_trop_normalized_cover_is_redundand}.
We emphasize that any stronger statement about the underlying combinatorics of tropical normalized covers capturing anything about the structure of the dual boundary complex of $\bP\MSD{g}{n}{\mu}$ are most likely false and not needed for our purposes. See also the discussion on future work in the introduction. 

For the next lemma note that all legs of $\hat \Gamma$ necessarily have to carry $o$-value $k$ by Definition~\ref{def:normalized_cover_enhanced_level_graph}. Hence, the structure $\hat \Gamma^+$ is not exactly the one constructed in Lemma \ref{lem:construction_of_k-enhancement} but rather ``$k$ times'' the output of Lemma \ref{lem:construction_of_k-enhancement}.

\begin{mylem} \label{lem:def_trop_normalized_cover_is_redundand}
	Let $\pi : \hat \Gamma \to \Gamma$ be a tropical normalized cover with divisor $D = kK_\Gamma + (f)$ on the base. 
	\begin{enumerate}
		\item Then $(f \circ \pi)/k$ is a rational function on $\hat \Gamma$ (i.e. it has integer slopes) and the legs of $\hat \Gamma$ (neglecting the dilation factor $k$) mark the canonical divisor
		\[F = K_{\hat \Gamma} + \left(\frac{f \circ \pi}{k} \right). \]
		
		\item The 1-enhancement of any half-edge which is part of an edge in $\hat \Gamma^+$ coincides with the one constructed in Lemma \ref{lem:construction_of_k-enhancement} based on $F$. 
	\end{enumerate}
	In particular, the 1-enhanced level graph structure on $\hat \Gamma$ is uniquely determined and thus we may speak of the normalized cover of enhanced level graphs associated to $\pi$.
\end{mylem}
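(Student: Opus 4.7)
The plan is to verify the three assertions of the lemma (the divisor interpretation, the enhancement formula, and the uniqueness of $\hat G^+$) by unwinding the defining relations in Definition \ref{def:normalized_cover_enhanced_level_graph} and comparing them with the enhancement construction of Lemma \ref{lem:construction_of_k-enhancement}. The crux of the argument is a single local identity expressing the expansion factor $d_{\hat h}$ of a half-edge $\hat h$ of $\hat \Gamma$ in terms of the base enhancement $o(h)$ and the degree $k$; this is the one step which genuinely combines the two structures (harmonic $k$-cyclic Hurwitz cover and cover of enhanced level graphs), and I expect it to be where most of the care is needed.

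First, I would compute the outgoing slope of $f \circ \pi$ at a half-edge $\hat h$ mapping to $h = \pi(\hat h)$. By the chain rule for tropical morphisms, this slope equals $d_{\hat h} \cdot s(h)$, where $s(h)$ is the outgoing slope of $f$ at $h$. Because $\langle \tau \rangle$ acts transitively on the fiber $\pi^{-1}(h)$, all preimages of $h$ share the same expansion factor; together with harmonicity $\sum_{\hat h' \to h} d_{\hat h'} = k$ and the count $|\pi^{-1}(h)| = \gcd(o(h), k)$ from Definition \ref{def:normalized_cover_enhanced_level_graph}(ii)(c), this forces
\[
d_{\hat h} = \frac{k}{\gcd(o(h), k)}.
\]
Substituting $o(h) = -s(h) - k$ from Lemma \ref{lem:construction_of_k-enhancement} gives $\gcd(o(h), k) = \gcd(s(h), k)$, so the slope of $(f \circ \pi)/k$ at $\hat h$ equals $s(h)/\gcd(s(h), k) \in \bZ$. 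In particular $(f \circ \pi)/k$ is a bona fide rational function on $\hat\Gamma$.

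Second, I would match the enhancement on an edge half-edge $\hat h$ with the one Lemma \ref{lem:construction_of_k-enhancement} would attach to the canonical divisor $F$. Condition (ii)(b) of Definition \ref{def:normalized_cover_enhanced_level_graph} yields
\[
\hat o(\hat h) + 1 \;=\; \frac{o(h) + k}{\gcd(o(h), k)} \;=\; \frac{-s(h)}{\gcd(s(h), k)} \;=\; -\hat s(\hat h),
\]
where $\hat s(\hat h)$ denotes the outgoing slope of $(f \circ \pi)/k$. Hence $\hat o(\hat h) = -\hat s(\hat h) - 1$, which is exactly the formula~\eqref{eq:def_enhancement} used to assign an enhancement to a rational function (with $k=1$, since $F$ is canonical). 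The same formula applied to a leg $\hat l$ (with $o(l) = 1$) gives $\hat o(\hat l) = k$.

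Finally, to identify the divisor, I would apply the genus condition of Definition \ref{def:enhanced_level_graph}(i) to $\hat G^+$ at each vertex $\hat v$, split the sum into contributions from legs (each equal to $k$) and from edge half-edges (each equal to $-\hat s(\hat h) - 1$), and rearrange to obtain
\[
k \cdot |L_{\hat v}| \;=\; 2g(\hat v) - 2 + \bigl(\text{edge-valence of }\hat v\bigr) + \sum_{\hat h \text{ edge half-edge at }\hat v} \hat s(\hat h) \;=\; K_{\hat\Gamma}(\hat v) + \bigl((f \circ \pi)/k\bigr)(\hat v) \;=\; F(\hat v).
\]
Reading each physical leg of $\hat\Gamma$ (all of dilation factor $k$) as $k$ stacked copies of a dilation-$1$ leg recovers the divisor $F$ vertex by vertex, giving the first claim. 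Uniqueness of $\hat G^+$ is then immediate: the level function $\hat \ell$ is forced by (ii)(a), and the computation above forces $\hat o$ on every half-edge and leg via (ii)(b). As indicated at the outset, the one step that demands care is deriving $d_{\hat h} = k/\gcd(o(h), k)$, which is precisely the bridge between the harmonic Hurwitz structure and the enhanced-level-graph structure on $\pi$.
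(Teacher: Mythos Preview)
Your proof is correct and shares its core with the paper's: both hinge on the identity $d_{\hat h}=k/\gcd(o(h),k)$ (from transitivity of $\tau$ plus harmonicity plus the fiber count (ii)(c)) and then read off $\hat o(\hat h)=-\hat s(\hat h)-1$ from (ii)(b).

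The one genuine difference is in how you establish the divisor identity. The paper first proves the leg count and only afterwards the enhancement formula; to compare $F(p)$ with $D(\pi(p))$ it invokes the local Riemann--Hurwitz condition for the Hurwitz cover to rewrite $2g(p)-2$ in terms of $2g(\pi(p))-2$. You reverse the order: you prove the enhancement formula first and then feed it into the genus condition of Definition~\ref{def:enhanced_level_graph}(i) for the $1$-enhanced graph $\hat G^+$, obtaining $k\cdot|L_{\hat v}|=F(\hat v)$ directly. This is a clean shortcut, since the genus condition on $\hat G^+$ already encodes the Riemann--Hurwitz information (it is part of the hypothesis that $\hat G^+$ is a valid $1$-enhanced level graph), and it avoids having to separately track the leg contributions to the Riemann--Hurwitz sum. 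Your reading of each dilation-$k$ leg as $k$ stacked chips is exactly the interpretation signaled by the parenthetical in the lemma and by the remark preceding it.
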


\begin{proof}
	Let $p \in \hat \Gamma$. 
	In the following computation we use the notation $s(h)$ to denote the outgoing slope of $f$ on a half-edge $h$ of $\Gamma$ and $\hat s(\hat h)$ for the outgoing slope of $\frac{f \circ \pi}{k}$ on the half-edge $\hat h$ of $\hat \Gamma$. 
	At this point we do not yet claim that $\hat s (\hat h)$ is an integer -- in fact, this will follow a posteriori from the proof of part two. 
	
	We now show that the number of legs at $p$ is given by $F(p)$. 
	Recall that the $k$-enhancement of $\Gamma$ carries $o$-value 1 on every leg. Thus, by the definition of normalized cover of enhanced level graphs, any leg will have a single preimage in $\hat \Gamma$. This leads us to compute
	\begin{align*}
		\# \{\text{legs at } p \} &= \# \{\text{legs at } \pi(p) \} \\
		&= D(\pi(p)) \\
		&= k \Big(2g(\pi(p)) - 2 + \valence(\pi(p)) \Big) + \sum_{h \ni \pi(p)} s(h) \ .
	\end{align*}	
	On the other side, we have by definition $F(p) = 2g(p) - 2 + \valence(p) + \sum_{\hat h} \hat s(\hat h)$. Since the tropical normalized cover satisfies the local Riemann-Hurwitz condition, we denote the local degree of $\pi$ at $p$ by $k'$ and obtain:
	\begin{equation} \label{eq:value_of_canonical_divisor}
		\begin{aligned}
			F(p) &= k' \Big(2g(\pi(p)) - 2 \Big) + \sum_{\hat h \ni p}(d_{\hat h} - 1) + \valence(p) + \sum_{\hat h \ni p} \hat s(\hat h) \\
			&= k' \Big(2g(\pi(p)) - 2 \Big) + \sum_{h \ni \pi(p)} \underbrace{\Bigg(\sum_{\substack{\hat h \in \pi^ {-1} (h) \\ \hat h \ni p}} d_{\hat h} \Bigg)}_{{= k'}} + \sum_ {\hat h \ni p} \hat s(\hat h) \\
			&= k' \Big(2g(\pi(p)) - 2 + \valence(\pi(p)) \Big) + \sum_{\hat h \ni p} \hat s(\hat h) \ .
		\end{aligned}	
	\end{equation}		
	The last step of Equation \eqref{eq:value_of_canonical_divisor} used that $\pi$ is harmonic. If $\hat h$ maps to $h$ with dilation factor $d_{\hat h}$ then 
	\begin{equation} \label{eq:connection_of_slopes}
		\hat s(\hat h) = \frac{s(h)d_{\hat h}}{k} \ .
	\end{equation} 
	Hence, using the harmonic property once more, we see
	\begin{equation*}
		\sum_{\hat h \ni p} \hat s (\hat h) = \sum_{h \ni \pi(p)} \frac{s(h)}{k} \underbrace{\Bigg(\sum_{\substack{\hat h \in \pi^ {-1} (h) \\ \hat h \ni p}} d_{\hat h} \Bigg)}_{{= k'}} = \frac{k'}{k} \sum_{h \ni \pi(p)} s(h) \ .
	\end{equation*}
	Plugging this into Equation \eqref{eq:value_of_canonical_divisor} shows that $F(p)$ is equal to $k'/k$ times the number of legs at $p$. 
	If there are no legs at $p$, then we are done. Else if there is a leg at $p$ then the local degree of $p$ is automatically $k' = k$ and again we are done. 
	This proves the first part of the claim except for the integrality of $\hat s(\hat h)$.
	
	For the second part we need to verify that the 1-enhancement $\hat o$ on $\hat \Gamma$ which was induced by $o$ does satisfy $\hat o (\hat h) + 1 = -\hat s (\hat h)$ for every internal half-edge $\hat h$ of $\hat \Gamma$. By definition of normalized cover of enhanced level graphs we have
	\begin{equation*}
		\hat o (\hat h) +1 
		= \frac{o(h) + k}{\gcd \big(o(h), k\big )} 
		= - \frac{s(h)}{| \pi^{-1}(h) |} \ .
	\end{equation*}
	On the other hand we need to determine $d_{\hat h}$. 
	Once again we use that $\pi$ is harmonic of degree $k$, i.e. 
	\[ \sum_{\hat h \in \pi^{-1}(h)} d_{\hat h} = k \]
	while at the same time $\tau$ is an isometry acting transitively on $\pi^{-1} (h)$. Consequently,  $d_{\hat h} = k / |\pi^{-1}(h)|$.
	Combining this result with Equation \eqref{eq:connection_of_slopes} we obtain
	\begin{equation*}
		-\hat s (\hat h) = - \frac{s(h) d_{\hat h}}{k} = - \frac{s(h)}{\vert \pi^{-1}(h) \vert} \ .
	\end{equation*}
	This completes the proof of the second part. It also shows that $\hat s(\hat h)$ is an integer for every internal half-edge of $\hat \Gamma$, simply because $\hat s (\hat h) = -\hat o (\hat h) - 1$ and the right hand side of that expression is an integer by definition of a normalized cover of enhanced level graphs. Hence every part of the claim is proven.
\end{proof}

Note that for $\xi = ((k), \ldots, (k))$ the tropicalization $\trop_H$ from Section \ref{sec:tropical_Hurwitz_covers} is $S_n$-equivariant. Recall that the canonical cover construction provides a map $\bP \Omega^k\cM_{g, n}(1, \ldots, 1) \to H_{\hat g \to g, k}(\xi) / S_n$. We define tropicalization of normalized covers by composition
\[ \trop_{\Xi^k} : \bP \Omega^k \cM_{g, n}(1, \ldots, 1)^\an \longrightarrow H_{\hat g \to g, k}(\xi)^\an / S_n \xrightarrow{\trop_H / S_n} \tropMSD \]
where the composition lands in $\tropMSD$ by definition of that locus.

\begin{mylem} \label{lem:trop_N}
	The map $\trop_{\Xi^k}$ is well-defined, continuous, proper, and closed. Furthermore, the following diagram commutes. 
	\begin{equation} \label{eq:trop_Xi_commutes_source_target}
		\begin{tikzcd}
			\bP\Omega^k\cM_{g,n}{(1, \ldots, 1)}^\an \arrow[dd, "\src"] \arrow[rd, "\trop_{\Xi^k}"] \arrow[rrd, bend left = 15, "\trop_{\Omega^k}" ]&& \\
			& \tropMSD \arrow[d, "\src"] \arrow[r, "\tgt"] & \tropHodge \\
			\bP\Omega\cM_{\hat g}(1, \ldots, 1)^\an \arrow[r, "\trop_\Omega"] & \bP \Omega M_{\hat g}^\trop \ . & 
		\end{tikzcd}
	\end{equation}
\end{mylem}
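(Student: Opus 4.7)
My plan is to build the proof on the analogous result for $\trop_{\Omega^k}$ (Lemma 2.10) together with the compatibility of $\trop_H$ with source and target maps (Proposition 3.3), so that most properties transfer via the defining restriction $\trop_{\Xi^k} := (\trop_H/S_n)|_{\bP\Omega^k\cM_{g,n}(1,\ldots,1)^\an}$.

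First I would establish well-definedness, i.e.\ that the image actually lies in $\tropMSD$. Given $[(X,\eta)]$ in the principal stratum, the canonical cover construction of Section~\ref{subsec:normalized_covers} produces $(\pi\colon \hat X\to X,\omega,\tau)$, which maps under $\trop_H/S_n$ to a tropical $k$-cyclic Hurwitz cover $\pi^\trop\colon\hat\Gamma\to\Gamma$ with ramification profile $((k),\ldots,(k))$. By the argument of Lemma~2.10, the divisor $D$ encoded by the legs of $\Gamma$ equals the specialization of $\div(\eta)$ and is therefore pluri-canonical, so $\Gamma$ carries the canonical $k$-enhanced level graph structure $\Gamma^+$ of Lemma~\ref{lem:construction_of_k-enhancement}. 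I would then equip $\hat\Gamma$ with the analogous $1$-enhancement built from $K_{\hat\Gamma}+(f\circ\pi/k)$ via Lemma~\ref{lem:def_trop_normalized_cover_is_redundand} and verify that $\pi^\trop\colon\hat\Gamma^+\to\Gamma^+$ satisfies the three conditions of Definition~\ref{def:normalized_cover_enhanced_level_graph}. Level preservation is automatic because $\tau$ respects the scaling limits; the formula for the enhancement at preimages of a half-edge and the count of preimages both follow from Theorem~2.20\,(vi) and the relation between $\ord_q\eta_v$ and $\ord_{\hat q}\omega_{\hat v}$ across the canonical cover (which dictates that a node of order $o(h)$ lifts to $\gcd(o(h),k)$ nodes, each of order $(o(h)+k)/\gcd(o(h),k)-1$). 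This is the main obstacle: once it is checked, the rest of the proof is formal.

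Continuity, properness, and closedness then follow in the same way as for $\trop_{\Omega^k}$ in Lemma~2.10. Continuity of $\trop_H$ is \cite[Theorem~25]{CMR16}, and passing to the $S_n$-quotient and restricting preserves continuity. For properness and closedness I would invoke that $\trop_H$ agrees with the Thuillier-type retraction on the toroidal boundary of $\overline{\mathcal{H}}_{\hat g\to g,k}(\xi)$ (as used in the argument concluding Section~\ref{subsec:realizability_problem}), so the analogous retraction description transfers these properties from the ambient map. Alternatively, one can observe that $\tropMSD$ is cut out from $H^\trop_{\hat g\to g,k}(\xi)/S_n$ by the closed conditions of Definition~\ref{def:tropical_normalized_cover}, so these topological properties descend from the corresponding properties of the Hurwitz tropicalization restricted to a closed subset.

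Finally, I would verify commutativity of the diagram in \eqref{eq:trop_Xi_commutes_source_target}. By construction, $\tgt\circ\trop_{\Xi^k}$ sends $[(X,\eta)]$ to $(\trop(X),\trop(\div(\eta)))$, which is exactly the output of $\trop_{\Omega^k}$; this identifies the right-hand triangle and uses nothing beyond the compatibility of the Hurwitz target map with tropicalization from Proposition~\ref{prop:tropicalization_Hurwitz_commutes_source_target}. The lower-left triangle $\src\circ\trop_{\Xi^k}=\trop_\Omega\circ\src^\an$ is the $k=1$ instance of the same statement: the source map forgets the base and retains $(\hat X,\omega)$, whose tropicalization as an abelian differential is precisely $\src(\pi^\trop\colon\hat\Gamma\to\Gamma)=\hat\Gamma$ with its $1$-enhancement, by Proposition~\ref{prop:tropicalization_Hurwitz_commutes_source_target} again. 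This concludes the proof.
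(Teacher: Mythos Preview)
Your proposal is correct and follows essentially the same approach as the paper. The only notable difference is in the well-definedness step: rather than constructing the $1$-enhancement on $\hat\Gamma$ from the divisor and verifying the conditions of Definition~\ref{def:normalized_cover_enhanced_level_graph} directly as you do, the paper shortcuts this by observing that the dual graph of the special fiber of the degeneration already carries a normalized-cover-of-enhanced-level-graphs structure (because that fiber is a boundary point of $\PMSD[k]{g}{n}{1,\dots,1}$), and then only checks that this structure agrees with the one produced from the divisor via Lemma~\ref{lem:construction_of_k-enhancement}, citing \cite[Theorem~1]{MUW17_published} on the cover side; your verification of (ii)(b) and (ii)(c) ultimately rests on the same special-fiber facts, so the two routes coincide in substance.
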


\begin{proof}
	To show that $\trop_{\Xi^k}$ is well-defined, let $(\hat X \to X, \bs, \omega)$ be a normalized cover of smooth curves defined over a non-Archimedean field. Its tropicalization is in particular a tropical $k$-cyclic Hurwitz cover $\hat \Gamma \to \Gamma$. By Proposition \ref{prop:tropicalization_Hurwitz_commutes_source_target} we know that $\hat \Gamma = \trop_{\Omega}(X', \omega)$ and $\Gamma = \trop_{\Omega^k}(X, \eta)$, where $\eta$ is the $k$-differential on $X$. By well-definedness of $\trop_{\Omega}$ and $\trop_{\Omega^k}$ the legs on $\hat \Gamma$ and $\Gamma$ do indeed represent canonical and pluri-canonical divisors $F$ and $D$ respectively. 
	Finally we need to check that $\hat \Gamma \to \Gamma$ can be endowed with the structure of a normalized cover of enhanced level graphs $\hat \Gamma^+ \to \Gamma^+$ such that $\Gamma^+$ is induced by $D$ via Lemma \ref{lem:construction_of_k-enhancement}. 
	First of all, there is indeed such a structure $\hat G^+ \to G^+$ on the underlying $k$-cover of graphs $\hat G \to G$ simply because the graphs are dual to the special fiber of the degeneration of $\hat X \to X$.  We check that the enhancements induced by the divisors are the same as $\hat G^+$ and $G^+$. On the cover this was part of the argument of \cite[Theorem~1]{MUW17_published} and hence the claim. 
	
	Finally we check that $\trop_{\Xi^k}$ is continuous, proper, and closed. 
	The proof of \cite[Theorem~1]{CMR16} shows that the retraction map from the analytification of Hurwitz space to its Berkovich skeleton can be extended to a compactification of both of these spaces. Hence the retraction map is continuous, proper, and closed. 
	By \cite[Theorem~1]{CMR16} tropicalization of Hurwitz covers behaves on each cone of the tropical moduli space like the retraction map. Thus the properties still hold for tropicalization of Hurwitz covers. 
	It is now easy to check that they remain to hold after taking the $S_n$-quotient, i.e. for $\trop_H/S_n$ and continue to hold after restricting to $\bP \Omega^k \cM_{g,n}(1, \ldots, 1)^\an$ by the same arguments as in Lemma~\ref{lem:trop_Omega}. This completes the proof.
\end{proof}

\begin{proof}[Proof of Theorem \ref{intro:thm:tropMSD}]
	The claim on $\trop_{\Xi^k}$ was proved in Lemma \ref{lem:trop_N}.
	We describe the generalized cone complex structure on $\tropMSD$. Consider a combinatorial type of a tropical $k$-cyclic Hurwitz cover, i.e.\ a normalized cover of enhanced level graphs $p : \hat G^+ \to G^+$ with dilation factors on every edge of $\hat G$ and with a deck transformation $\tau : \hat G \to \hat G$. In the proof of Theorem \ref{intro:thm:trop_k_Hodge_bundle} we showed that the range of possible choices for edge lengths on $G$ such that it becomes a tropical curve with pluri-canonical divisor is a finite union of rational polyhedral cones. Now note that each such choice determines a unique tropical Hurwitz cover by \cite[Lemma~17]{CMR16}. All of these choices give indeed tropical normalized covers, because this property does only depend on the combinatorial type. Hence we obtain a stratification of $\tropMSD$ in rational polyhedral cones.
	
	For the statement about the dimension consider again Example~\ref{exa:maximal_cone_tropicalHodge}. The graph depicted in Figure~\ref{fig:maximal_cone_tropicalHodge} can be endowed with a tropical normalized cover by covering every vertex with a single preimage and every edge with as many preimages are necessary to satisfy the definition of a normalized cover of enhanced level graphs. This describes a cone in $\tropMSD$ that maps under $\tgt$ isomorphically onto the cone from Example~\ref{exa:maximal_cone_tropicalHodge}.
\end{proof}

\begin{mydef}
	The image of $\trop_{\Xi^k}$ in $\tropMSD$ is called \emph{locus of realizable covers}.
\end{mydef}

Note that the locus of realizable covers is of positive codimension in $\tropMSD$ by the exact same argument as for the realizability locus in $\tropHodge$. Hence, realizability of normalized covers is a nontrivial problem as well. The following is a more precise version of Corollary \ref{intro:cor:reduction} and reduces our original realizability problem to the one for covers.

\begin{mycor} \label{cor:reduction_to_covers}
	Let $([\Gamma], D)$ be a pair consisting of an isomorphism class of a tropical curve $\Gamma$ with an effective pluri-canonical divisor $D = kK_\Gamma + (f)$. The pair is realizable if any only if there exists a realizable tropical normalized cover $\hat{\Gamma} \to \Gamma$ with $\tgt([\hat{\Gamma} \to \Gamma]) = ([\Gamma], D)$.
\end{mycor}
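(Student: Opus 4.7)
The whole statement hinges on the commutative diagram of Lemma~\ref{lem:trop_N}, so I would start by unpacking both directions in terms of it. For the easy direction ($\Leftarrow$), suppose $\pi : \hat{\Gamma} \to \Gamma$ is a realizable tropical normalized cover with $\tgt([\hat{\Gamma} \to \Gamma]) = ([\Gamma], D)$. By definition of realizability of covers, some $[\hat{X} \to X, \bs, \omega] \in \bP\Omega^k\cM_{g,n}(1,\ldots,1)^\an$ maps to it under $\trop_{\Xi^k}$. The upper triangle of diagram~\eqref{eq:trop_Xi_commutes_source_target} then forces $\trop_{\Omega^k}(X, \eta) = \tgt \circ \trop_{\Xi^k}([\hat{X} \to X, \bs, \omega]) = ([\Gamma], D)$, where $\eta$ is the $k$-differential obtained from $\omega^k$ by descending along $\tau$. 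Hence $([\Gamma], D)$ is realizable.

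For the harder direction ($\Rightarrow$), the plan is to establish the chain
\[
\tgt\big(\Im(\trop_{\Xi^k})\big) \;=\; \Im\big(\trop_{\Omega^k}|_{\bP\Omega^k\cM_g(1,\ldots,1)^\an}\big) \;=\; \Im(\trop_{\Omega^k}) \ .
\]
The first equality is again the commutative triangle of Lemma~\ref{lem:trop_N} (after forgetting the unlabeled markings of the $n = k(2g-2)$ simple zeros). For the second, the inclusion ``$\subseteq$'' is trivial, so the work is to show the reverse. Since the principal stratum is a Zariski-open dense subset of the $k$-Hodge bundle (and this passes to the analytifications), $\bP\Omega^k\cM_g(1,\ldots,1)^\an$ is dense in $\bP\Omega^k\cM_g^\an$, and by continuity of $\trop_{\Omega^k}$ the left-hand side is dense in the right-hand side. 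It therefore suffices to show that the left-hand side is already closed in $\tropHodge$. By Lemma~\ref{lem:trop_N}, $\trop_{\Xi^k}$ is closed, so it will be enough to verify that $\tgt : \tropMSD \to \tropHodge$ is closed; the composition $\tgt \circ \trop_{\Xi^k}$ is then closed, and a closed map from a topological space has closed image. Once equality of images is established, I pick any $(X, \eta)$ realizing $([\Gamma], D)$, use the equality to replace it with one in the principal stratum having the same tropicalization, mark the $n$ simple zeros, and apply $\trop_{\Xi^k}$ to obtain the realizable normalized cover whose target is $([\Gamma], D)$ by commutativity.

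The main obstacle is the closedness of $\tgt$. I expect to deduce it from the fact that $\tgt$ is a morphism of generalized cone complexes whose fibers are finite: over any fixed $([\Gamma], D)$, a cover $\pi : \hat{\Gamma} \to \Gamma$ is determined by its combinatorial type as a $k$-cyclic cover of weighted graphs over $\Gamma$ (with ramification profiles forced by the enhancement coming from $D$ through Lemma~\ref{lem:construction_of_k-enhancement}) together with expansion factors, and the edge lengths of $\hat{\Gamma}$ are then dictated by those of $\Gamma$ via the expansion factors. There are only finitely many such combinatorial types, so the restriction of $\tgt$ to each cone of $\tropMSD$ is a finite, linear, proper map onto a face of a cone of $\tropHodge$; gluing these together along the colimit diagram shows $\tgt$ is proper, and hence closed. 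An alternative route that sidesteps this point would be to deform $\eta$ directly within the fiber $\bP H^0(X, K_X^{\otimes k})$ (whose principal stratum is Zariski-open and dense) to land in $(1,\ldots,1)$ while preserving the specialization of the divisor — but the closedness argument above is cleaner and uses only the formal properties of $\trop_{\Xi^k}$ already established in Lemma~\ref{lem:trop_N}, matching the paper's assertion that the corollary is an ``easy consequence'' of those properties.
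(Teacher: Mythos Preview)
Your argument is correct and uses the same commutative triangle from Lemma~\ref{lem:trop_N} that the paper invokes. The paper's one-line proof (``This is simply the triangle in Diagram~\eqref{eq:trop_Xi_commutes_source_target} commuting'') handles the $(\Leftarrow)$ direction exactly as you do, but it leaves the $(\Rightarrow)$ direction implicit: the triangle only has $\bP\Omega^k\cM_{g,n}(1,\ldots,1)^\an$ in the upper-left, whereas realizability is defined via $\trop_{\Omega^k}$ on the full Hodge bundle $\bP\Omega^k\cM_g^\an$. You correctly identify this point and supply the missing step---density of the principal stratum together with closedness of $\tgt\circ\trop_{\Xi^k}$---and your justification that $\tgt$ is closed (each cone of $\tropMSD$ maps linearly and bijectively onto a cone in $\tropHodge$, there are finitely many cones, hence $\tgt$ is proper and closed) is sound. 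So your write-up is an elaboration of the same approach, filling in a gap the paper glosses over rather than taking a different route.
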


\begin{proof}
	This is simply the triangle in Diagram \eqref{eq:trop_Xi_commutes_source_target} commuting.
\end{proof}

\begin{myrem}
	Note that contrary to the situation of twisted differentials, a tropical curve with pluri-canonical divisor does not admit a unique normalized cover. Indeed, when asking for realizability of a tropical normalized cover where a vertex $v$ is covered with $d_v$ preimages we are asking for a realization by a twisted differential where $\eta_v$ is precisely a $d_v$-th power of a primitive $k/d_v$-differential.
\end{myrem}

	%%%%%%%%%%%%%%%%%%%%%%%%%%%%%%%%%%%%%%%%%%%%%%%%%%%%%%%%%%%%%%%%%%%%%%%%%%%%%%%%%%%%%%%%%%%%%%%%%%%%%%%%
\section{The realizability locus}
\label{sec:realizability_locus}
%%%%%%%%%%%%%%%%%%%%%%%%%%%%%%%%%%%%%%%%%%%%%%%%%%%%%%%%%%%%%%%%%%%%%%%%%%%%%%%%%%%%%%%%%%%%%%%%%%%%%%%%

We now turn to the remaining problem of realizability of tropical normalized covers. Let $\pi : \hat \Gamma^+ \to \Gamma^+$ be a tropical normalized cover with associated enhancements (see Lemma~\ref{lem:def_trop_normalized_cover_is_redundand}). This data contains already most of the discrete information of a boundary point of $\PMSD[k]{g}{n}{1, \ldots, 1}$, however it does not contain continuous information such as the \mbox{($k$-)residues} of the twisted differential. Realizing the tropical datum amounts to choosing a ``valid'' combination of ($k$-)residues. Obstructions arise by the $k$-residue map being non-surjective for some types (this leads to the notion of inconvenient vertex, see Definition \ref{def:inconvenient_vertex}) and by global compatibility conditions (we tackle this by assigning residues along certain cycles in $\hat \Gamma^+$, see Definitions~\ref{def:effective_cycle} and \ref{def:independent_pair}). Once notation is established, we state and prove our main theorem (Theorem \ref{thm:main_theorem}).

\subsection{Special vertices}

As we have seen in Section~\ref{subsec:image_residue_map}, the $k$-residue map 
\[ \Res_g^k(\mu_\red) : \Omega^k \cM_g(\mu_\red)^\prim \longrightarrow \bC^p \times (\bC^\times)^s \] 
is not surjective in general.
Vertices with a reduced type for which the residue map is not surjective are a major obstruction to realizability.
In the abelian case treated in \cite{MUW17_published}, those vertices were called inconvenient.
We will extend the notion of inconvenience to $k$-differentials and additionally introduce a short list of illegal vertices -- a concept which did not appear in the abelian case.

Let us fix some $k \geq 1$.
We will formulate the definitions in the language of normalized covers of enhanced level graphs $\pi : \hat G^+ \to G^+$.
Let $v \in V(G^+)$ be a vertex, let $d_v := |\pi^{-1}(v)|$ be the number of preimages and let $k_v := k / d_v$.
Recall the notation introduced in Definition~\ref{def:reduced_type} where we denoted the reduced type of $v$ by
\[
	\mu_\red(v) = (a_1, \dots, a_t; -b_1, \dots, -b_p; -c_1, \dots, -c_r; -k^s).
\]
We want to realize $\pi^{-1}(v) \to v$ as a normalized cover. In particular $v$ has to be realized as a $d_v$-th power of a primitive $k_v$-differential of type
\[
	\mu_\red'(v) := \frac{1}{d_v} \cdot \mu_\red(v) = (a_1', \dots, a_t'; -b_1', \dots, -b_p'; -c_1', \dots, -c_r'; -k_v^s).
\]

For $k_v=1$ we have $r=0$, as in this case the $c_i$ would be divisible by $d_v = k$, contradicting the definition of the $c_i$.
Following \cite{MUW17_published} we call a vertex $v$ inconvenient if the $k_v$-residue map $\Res_{g(v)}^{k_v}(\mu_\red') : \bP\Omega^{k_v}\cM_{g(v)}(\mu_\red')^\prim \to \bC^p \times (\bC^\times)^s$ is not surjective. More precisely:

\begin{mydef} \label{def:inconvenient_vertex}
	A vertex $v$ is called \emph{inconvenient of type I} if $\mu_\red'$ is one of the types in Proposition~\ref{prop:residue_map_origin_missing} with $k_v$ substituted for $k$. It is called \emph{inconvenient of type II} if $\mu_\red'$ is one of the types in Propositions~\ref{prop:residue_map_line_missing}, \ref{prop:residue_map_abelian_coprime} or \ref{prop:residue_map_triangular} again with $k_v$ instead of $k$. Summarizing, we call $v$ \emph{inconvenient} if it is inconvenient of type I or II.
\end{mydef}

Type I inconvenience means that only the origin is missing from the image of the $k_v$-residue map, whereas type II means that a finite number of lines or at most $2$-dimensional subvarieties is missing. 

Recall from Theorem~\ref{thm:empty_primitive_strata} that for some strata the primitive part $\Omega^k \cM_g(\mu)^\prim$ is empty.
Consequently, vertices that ask to be realized by an element of such an empty primitive part of a stratum are not realizable at all.

\begin{mydef} \label{def:illegal_and_splitting_vertices}
	The vertex $v$ is called \emph{illegal} in the following cases.
	\begin{enumerate}[(i)]
		\item If $\mu_\red' = (-1,1)$.
		\item If $k_v = 1$ and $\mu_\red' = (-1,a_1,\dots,a_t)$ with $a_i \geq 1$.
		\item If $k_v = 2$ and $\mu_\red' = (1,3)$.
		\item If $g(v) = 0$ and $\gcd(\mu_\red', k_v) \neq 1$.
		\item If $k_v \geq 2$ and $\mu_\red' = \emptyset$.
		\item If $k_v = 2$ and $\mu_\red' = (4)$.
	\end{enumerate}
\end{mydef}

\begin{myrem}
	Being illegal is not an intrinsic property of a vertex. Rather it depends on the context of the given normalized cover. For example a vertex of type $\mu_\red = \emptyset$ is not illegal if it is covered by precisely $k$ preimages, i.e. it may be realizable as a $k$-th power of an abelian differential.
\end{myrem}

\subsection{Special cycles}

When assigning ($k$-)residues to our given enhanced combinatorial data $\pi : \hat G^+ \to G^+$ we have to ensure some global compatibility conditions. These conditions are compatibility with the deck transformation $\tau$ as well as the matching residue condition, global residue conditions, and the residue theorem on $\hat G^+$. Compatibility with $\tau$ means in particular, that the choice of residues on $\hat G^+$ determines $k$-residues on $G^+$. Hence we will focus on $\hat G^+$. 

Let $\gamma$ be a simple oriented cycle in $\hat G^+$ and let $L_\gamma$ denote the lowest level $\gamma$ passes through.
We want to use such a cycle to modify the residues of $\hat \Gamma$ similar to the course of action in the proof of \cite[Theorem~6.3]{MUW17_published}.
There the authors chose a complex number $r \in \bC^\times$ and added to each half-edge $h$ of $\hat G^+$ on level $L_\gamma$ the value $r$ (resp.\ $-r$) to the residue at the half-edge $h$ if $\gamma$ leaves (resp.\ enters) the vertex incident to $h$ along this half-edge.
This operation maintains the residue theorem at each vertex, the matching residue condition, and the global residue condition, i.e. if each of those conditions was true before modifying the residues of $\hat G^+$, the conditions still hold for the modified residues.

Recall that compatibility with the deck transformation $\tau : \hat G^+ \to \hat G^+$ means that the assigned residue at a half-edge $\tau(h)$ has to be $\zeta$ times the residue at $h$. 
After adding the residues along the cycle $\gamma$ as described above, this is no longer the case in general.
We address this problem by not only considering the cycle $\gamma$, but the entire $\tau$-orbit of $\gamma$ consisting of
\[
	\gamma_i := \tau_*^i(\gamma) \qquad \text{for } i = 0, \dots, k-1.
\]
We provide each of the cycles $\gamma_i$ with the induced orientation and add $\pm \zeta^i r$ to the residues as described above. We will refer to this operation as \emph{assigning the residue $r$ along the orbit of $\gamma$}. The total change to the residues under this operation can be easily expressed with the following shorthand notation.

\begin{mydef} \label{def:vector_of_cycle}
	Let $\gamma$ be a simple closed cycle in $\hat G^+$ with fixed orientation. Given a vertex $\hat v$ in $\hat G^+$, let $H'(\hat v)$ denote the set of half-edges incident to $\hat v$ with $o$-value $\leq -1$. Then we define a vector $R_\gamma(\hat v) = (R_\gamma(\hat v)_h)_{h \in H'} \in \bC^{|H'|}$ as follows. Set it to be 0 if $\hat v$ does not lie on the lowest level that $\gamma$ passes though and otherwise 
	\begin{equation} \label{eq:definition_R_vector}
            R_\gamma(\hat v)_h := \sum_{i = 0}^{k-1} \epsilon_i \zeta^i, \qquad
            \text{where }
            \epsilon_i := \begin{cases}
                    1 & \text{if $\tau_\ast^i \gamma$ enters $\hat v$ through $h$} \\
                    -1 & \text{if $\tau_\ast^i \gamma$ leaves $\hat v$ through $h$} \\
                    0 & \text{if $\tau_\ast^i \gamma$ does not pass though $h$}. 			
            \end{cases}
	\end{equation}
	
	Now let $v$ be the image of $\hat v$ under $\pi : \hat G^+ \to G^+$ and denote $\mu_\red$ the reduced type of $v$ as in Definition \ref{def:reduced_type}. We define a vector $R^k_\gamma(v) \in \bC^{p+s}$ to be 0 if $R_\gamma(\hat v)$ is 0 and otherwise $R^k_\gamma(v)_h := (R_\gamma(\hat v)_{\hat h})^k$ for $\hat h \in \pi^{-1}(h)$ arbitrary.
\end{mydef}

When assigning the residue $r$ along the orbit of $\gamma$ the residues at a vertex $\hat v$ change precisely by adding $r$ times $R_\gamma(\hat v)$ to the vector of residues. The residues obtained in this way obviously still preserve the residue theorem, the matching residue condition, and the global residue condition. Additionally, we have maintained compatibility with the deck-transformation. This means that the residues we assigned on $\hat G^+$ induce well-defined $k$-residues on $G^+$. By Lemma~\ref{lem:induced_k_residues} the change to the $k$-residues at a vertex $v$ of $G$ is precisely $r^k$ times $R_\gamma^k(v)$. Observe that a given simple closed cycle $\gamma$ can only be used to change the $k$-residues at a vertex $v$ by a $\bC$-multiple of $R^k_\gamma(v)$. This means that some cycles will be more useful for our purpose than others. We illustrate two notable phenomena in Examples \ref{exa:non-effective_cycle} and \ref{ex:pair_of_cycles}.

\begin{myrem}
	The choice of an orientation in Definition \ref{def:vector_of_cycle} merely fixes the sign of $R_\gamma(\hat v)$. For the rest of this article only the $\bC$-span of $R_\gamma^k(v)$ will be of relevance (see Definitions~\ref{def:effective_cycle} and \ref{def:independent_pair} below), hence this choice never really matters. 
\end{myrem}

\begin{myexa} \label{exa:non-effective_cycle}
	\begin{figure}[htb]
		\centering
		\begin{minipage}{0.9\textwidth}
			\centering
			\begin{tikzpicture}[scale=1.5, decoration={markings, mark=at position .5 with {\arrow{>}}}]
	%%% cover %%%
	\fill (0, 0) circle [radius = 2pt];
	\fill (2, 0) circle [radius = 2pt];
	\draw[postaction={decorate}] (0, 0) to [bend left=100,min distance=30] (2, 0);
	\draw[postaction={decorate}] (2, 0) to [bend right=40] (0, 0);
	\draw[postaction={decorate}] (0, 0) to [bend right=40] (2, 0);
	\draw[postaction={decorate}] (2, 0) to [bend left=100,min distance=30] (0, 0);
	\draw[<->, dotted, shorten <= 6pt, shorten >= 6pt] (0, 0) to node[above]{$\tau$} (2, 0);
	\draw[<->, dotted] (1, 0.45) to node[right]{$\tau$} (1, 0.7);
	\draw[<->, dotted] (1, -0.45) to node[right]{$\tau$} (1, -0.7);

	%legs
	\path[draw] (0,0) --++ (-0.25,0);
	\draw (-0.4, 0) node {2};
	\path[draw] (2,0) --++ (0.25,0);
	\draw (2.4, 0) node {2};

	\draw (-.5, .5) node {$\hat G^+$};

	%%% base %%%
	\fill (1, -2) circle [radius = 2pt];
	\draw (1.5, -2) circle [radius = .5];
	\draw (0.5, -2) circle [radius = .5];

	%legs
	\path[draw] (1,-2) --++ (0,-0.35);
	\draw (1, -2.5) node {4};

	\draw (-.5, -1.5) node {$G^+$};
\end{tikzpicture}
		\end{minipage}
		\caption{A cover of graphs with the action of the deck-transformation $\tau$.}
		\label{fig:noneffective_cycle}
	\end{figure}
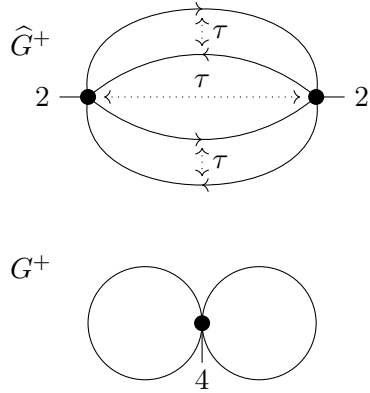
	
	Consider the cover of enhanced level graphs $\hat G^+ \to G^+$ for $k=2$ depicted in Figure~\ref{fig:noneffective_cycle}.
	Let $\gamma_1$ be the simple cycle in $\hat G^+$ that uses the two topmost edges, considered with the orientation indicated in the picture.
	When we compute $R_{\gamma_1}(\hat v)$ for either of the vertices in $\hat G^+$ we see that the vector is zero.  
	To see this, consider the top left half-edge.
	Along $\gamma_1$, we add $1$.
	The oriented cycle $\tau_* \gamma_1$ agrees with $\gamma_1$, and thus we add at the same half-edge the value $\zeta = -1$ when distributing the residues for $\tau_* \gamma_1$.
	In total we have added $1-1=0$ to the residue.
	This means that $\gamma_1$ cannot be used to assign any nonzero residues at all.

	Let $\gamma_2$ be the simple cycle in $\hat G^+$ that uses the two outermost edges, again considered with the orientation indicated in the picture.
	Computing $R_{\gamma_2}(\hat v)$ for either of the vertices in $\hat G^+$ we see that the vector now is $(-1,-1,1,1)$.
	But this vector is not in the image of the $1$-residue map for a vertex of type $(-1^4, 2)$ by Proposition~\ref{prop:residue_map_abelian_coprime}.
	These two observations motivate the following definition.
\end{myexa}	

\begin{mydef} \label{def:effective_cycle}	
	Let $\gamma$ be a simple closed cycle in $\hat G^+$ and let $v$ be a vertex in $G^+$. Choose and fix an orientation of $\gamma$. We say that $\gamma$ is \emph{effective} for a half-edge $h$ incident to $v$ if $R^k_\gamma(v)_h$ is nonzero. 
	If $v$ is inconvenient of type I, we say that $\gamma$ is \emph{admissible for $v$} if $\gamma$ is effective for at least one vertical half-edge $h$ incident to $v$. 
	If $v$ is inconvenient of type II, we say that $\gamma$ is \emph{admissible for $v$} if $R^{k_v}_\gamma(v)$ lies in the image of $\Res_g^{k_v}(\mu'_\red(v))$.
\end{mydef}

\begin{myexa} \label{ex:pair_of_cycles}
	\begin{figure}[htb]
		\centering
		\begin{minipage}{0.9\textwidth}
			\centering
			\begin{tikzpicture}[scale=1.5, decoration={markings, mark=at position .5 with {\arrow{>}}}]
	%%% cover %%%
	\fill 
	(0, 0) circle (2pt);
	\draw
	(-0.5, 1) node[draw, thick, shape = circle, inner sep = 1pt, anchor = south] {1}
	(-1.5, 1) node[draw, thick, shape = circle, inner sep = 1pt, anchor = south] {1}
	(0.5, 1) node[draw, thick, shape = circle, inner sep = 1pt, anchor = south] {1}
	(1.5, 1) node[draw, thick, shape = circle, inner sep = 1pt, anchor = south] {1};
	
	\path[draw] (0,0) to[bend left = 12] (-1.5, 1) to[bend left = 12] (0,0)
	to[bend left = 20] (-.5, 1) to[bend left = 20] (0,0)
	to[bend left = 20] (.5, 1) to[bend left = 20] (0,0)
	to[bend left = 12] (1.5, 1) to[bend left = 12] (0,0);
	
	%legs
	\path[draw] (0,0) --++ (-45:0.3);
	\draw (0.3, -.3) node {8};
	\path[draw] (0,0) --++ (-135:0.3);
	\draw (-0.3, -.3) node {6};
	
	\draw (-2.5, .5) node {$\hat G^+$};

	%%% base %%%
	%genus 1 vertices
	\draw (-1, -1) node[draw, thick, shape = circle, inner sep = 1pt, anchor = south east] {1};
	\draw (1, -1) node[draw, thick, shape = circle, inner sep = 1pt, anchor = south west] {1};
	
	%parallel edges
	\path[draw] (0, -2) to[bend left = 30] (-1, -1) to[bend left = 30] (0, -2) to[bend left = 30] (1, -1) to[bend left = 30] (0, -2);
	
	%genus 0 vertex
	\fill (0, -2) circle (2pt);
	
	%legs
	\path[draw] (0,-2) --++ (-45:0.3);
	\draw (0.3, -2.3) node {7};
	\path[draw] (0,-2) --++ (-135:0.3);
	\draw (-0.3, -2.3) node {5};
	
	%name of graph
	\draw (-2.5, -1.5) node {$G^+$};
	
\end{tikzpicture}
		\end{minipage}
		\caption{An inconvenient vertex which cannot be redeemed with a single cycle.}
		\label{fig:independent_pair}
	\end{figure}
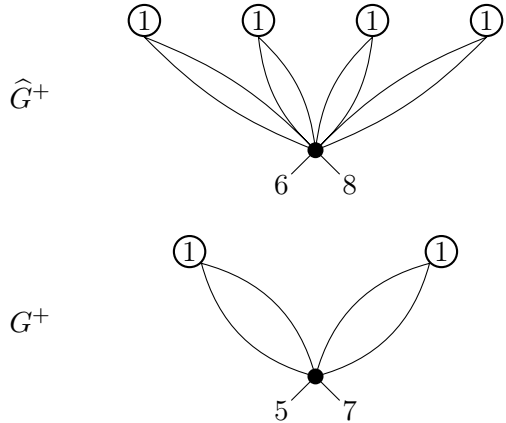
	
	Contrary to the abelian case in \cite{MUW17_published} there are situations, where using only one cycle will not be sufficient to achieve valid residues. To see this, consider the graph in Figure \ref{fig:independent_pair}. The vertex on lowest level is inconvenient of type II (see Figure~\ref{fig:residue_map_line_missing}, last of the $k = 2$ cases with $p = b = 4$). More specifically, the image of the residue map is missing any tuples where two entries agree while the other two are zero. The depicted cover $\hat G^+$ is the only valid choice -- covering a genus 1 vertex of type $(0,0)$ with only one preimage would be illegal. Notice that each of the four simple cycles in $\hat G^+$ is effective but induces a tuple of $k$-residues on the base which is not contained in the image of the residue map. Hence none of the available cycles is sufficient to redeem the inconvenient vertex, however using two cycles will work. A converse to this phenomenon is illustrated in Example~\ref{exa:contracting_level}: there we have an inconvenient vertex which can only be redeemed with an admissible cycle but no pair of cycles.
\end{myexa}

\begin{mydef} \label{def:independent_pair}
	Let $v$ be a vertex in $\hat G^+$ and let $\gamma$ and $\gamma'$ be oriented cycles which are effective for $v$. We call $(\gamma, \gamma')$ an \emph{independent pair for $v$} if the induced vectors $R^{k_v}_\gamma(v)$ and $R^{k_v}_{\gamma'}(v)$ are not contained in the same linear subspace of the complement of $\Im \Res_g^{k_v}(\mu'_\red(v))$.
\end{mydef}

\begin{myrem}
	The upshot behind Definition \ref{def:independent_pair} is that linear combinations of the vectors $R^{k_v}_\gamma(v)$ and $R^{k_v}_{\gamma'}(v)$ will generically lie in the image of the $k_v$-residue map. This is trivially true for inconvenient vertices of type I as the cycles of an independent pair are necessarily admissible, i.e. effective. In fact, the notion of independent pair is not interesting for type I vertices. For inconvenient vertices of type II this is easily seen to be true for all cases where the complement of $\Res_g^{k_v}(\mu'_\red(v))$ consists of a finite union of lines or planes, i.e. for all cases except those considered in Proposition \ref{prop:residue_map_triangular} Part (\ref{missing_cones}). This is the only case where a finite union of 2 dimensional cones -- none of which contains a 2 dimensional plane -- is missing. Here a pair of cycles may still be independent even if both $R^{k_v}$-vectors lie in the same irreducible component of the complement of $\Im \Res_g^{k_v}(\mu'_\red(v))$.
\end{myrem}

\subsection{Realizability of covers}

\begin{mythm} \label{thm:main_theorem}
	Fix an algebraically closed base field of characteristic~0.
	Let $g \geq 2$ and fix an integer $k \geq 1$. Let $\pi : \hat \Gamma^+ \to \Gamma^+$ be a tropical normalized cover with enhancements associated by Lemma \ref{lem:def_trop_normalized_cover_is_redundand}. Denote the effective pluri-canonical divisor marked by the legs of $\Gamma$ by $D = kK_\Gamma + (f)$. Then $(\pi : \hat \Gamma \to \Gamma, D)$ is realizable if and only if the following conditions hold.
	\begin{enumerate}[(i)]
		\item There is no illegal vertex in $\pi$.
		\item For every horizontal edge $\hat e$ in $\hat \Gamma^+$ there is an effective cycle in $\hat \Gamma^+$ through $\hat e$.
		\item For every inconvenient vertex $v$ in $\Gamma^+$ there is an admissible cycle in $\hat \Gamma^+$ through one of the preimages $\hat v$ or there is an independent pair of cycles.
	\end{enumerate}
\end{mythm}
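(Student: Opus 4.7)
The equivalence splits into \emph{necessity} and \emph{sufficiency}, with the sufficiency direction containing the substantive work.

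For necessity, I would verify each condition separately. Condition (i) is forced immediately: an illegal vertex $v$ asks to be realized by a primitive $k_v$-differential whose stratum $\bP\Omega^{k_v}\cM_{g(v)}(\mu'_\red(v))^{\prim}$ is empty by Theorem~\ref{thm:empty_primitive_strata}. Condition (ii) is forced because a horizontal edge $\hat e$ in $\hat\Gamma^+$ corresponds via Definition~\ref{def:normalized_cover_enhanced_level_graph} to a pair of poles of order exactly $-1$ in the would-be abelian cover $\hat X$, whose residues must be nonzero and MRC-matched; $\tau$-equivariance together with the residue theorem at each vertex of the lowest level crossed by $\hat e$ forces the residue at $\hat e$ to arise from a $\bC$-linear combination of vectors $R_\gamma(\hat v)$, which is only possible if some cycle through $\hat e$ is effective. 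Condition (iii) is forced because the $k_v$-residue tuple at an inconvenient vertex $v$ must land in $\Im\Res_{g(v)}^{k_v}(\mu'_\red(v))$, and the only $\tau$-, MRC- and GRC-preserving modifications of residues available are precisely those spanned by the cycle-vectors $R^{k_v}_\gamma(v)$; in the absence of an admissible cycle or independent pair the tuple is trapped in the missing locus of Propositions~\ref{prop:residue_map_origin_missing}--\ref{prop:residue_map_triangular}.

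For sufficiency, the strategy is to build a boundary point in $\PMSD[k]{g}{n}{1,\dots,1}$ whose underlying combinatorial cover is $\pi:\hat G^+\to G^+$ and then apply Theorem~\ref{thm:BCGGM_main_theorem} to produce a one-parameter smoothing whose tropicalization recovers the prescribed edge lengths. Following the blueprint of \cite{MUW17_published}, I proceed in two stages. In the first stage, for each vertex $v$ of $G^+$ (which is non-illegal by (i)) I realize the fiber $\pi^{-1}(v)\to v$ by a $d_v$-th power of a primitive $k_v$-differential of type $\mu'_\red(v)$ using the Gendron--Tahar results, leaving the residues to be pinned down. The residues are then chosen by an iterative procedure on $\hat G^+$: starting from the naive assignment (zero residue at every pole of order divisible by $k$ and smaller than $-k$, and some chosen nonzero value at every pole of order exactly $-k$, compatibly with $\tau$), I correct in turn (a) MRC-defects at horizontal edges using effective cycles supplied by (ii), (b) residue vectors at inconvenient vertices using the admissible cycle or independent pair supplied by (iii), and (c) GRC-defects at each level $L$ using cycles supported in $\hat G^+_{>L}$. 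Surjectivity of the residue map at all remaining vertices then absorbs any further constraint. Once the residue data is globally valid, Theorem~\ref{thm:BCGGM_main_theorem} yields a stable family $\hat\cX/S$; choosing the divisibility parameter $N$ there sufficiently large so that the local equations $xy=t^{N(\hat\ell(v^+)-\hat\ell(v^-))}$ and $xy=t^{Nn_{\hat e}}$ recover the prescribed tropical edge lengths of $\hat\Gamma\to\Gamma$ completes the construction.

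The main obstacle is the simultaneous consistency of the residue corrections. A cycle added to fix one inconvenient vertex inevitably modifies residues at every other vertex on its lowest level, and might push another inconvenient vertex out of the image of its residue map, or create a new MRC- or GRC-violation. The resolution is a genericity argument: the union of missing loci supplied by Propositions~\ref{prop:residue_map_origin_missing}--\ref{prop:residue_map_triangular} is a finite union of proper subvarieties, so after fixing this finite collection of forbidden loci the scaling scalars along the admissible cycles can be chosen from cofinite subsets of $\bC^\times$. The subtlest case is Proposition~\ref{prop:residue_map_triangular}\eqref{missing_cones}, where the missing locus is a $2$-dimensional cone not contained in any hyperplane; this is exactly why Definition~\ref{def:independent_pair} demands a \emph{pair} of cycles rather than a single one, since a generic $\bC$-linear combination of two transverse cycle-vectors escapes the cone. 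Processing levels from the bottom upward and invoking this genericity at each step, the iteration terminates with a globally valid residue configuration, yielding the desired realization.
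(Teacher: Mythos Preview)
Your sufficiency outline is close in spirit to the paper's but introduces an unnecessary complication that actually breaks the argument. You propose to start from a ``naive assignment'' with \emph{nonzero} residues at the order-$(-k)$ poles and then ``correct'' MRC-, inconvenience- and GRC-defects iteratively. The problem is that assigning a residue $r$ along the $\tau$-orbit of a cycle \emph{preserves} the residue theorem, MRC, GRC and $\tau$-compatibility; it cannot \emph{create} them. So if your naive assignment violates any of these (and an arbitrary nonzero choice at the simple poles generally will), no amount of cycle corrections will repair it. The paper avoids this by initializing \emph{all} residues to zero --- which trivially satisfies every global condition --- and then choosing, once and for all, generic scalars $r_1,\dots,r_\lambda$ for all the cycles supplied by (ii) and (iii) simultaneously. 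There is no level-by-level iteration and no GRC-correction step; GRC is never violated. You are also missing two small but necessary reductions that the paper makes explicit: the density/closedness argument reducing to rational (hence integer after rescaling) edge lengths so that Theorem~\ref{thm:BCGGM_main_theorem} applies, and the separate proposition extending the result from $\bC$ to an arbitrary algebraically closed field of characteristic~$0$ via base change.

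Your necessity argument, on the other hand, has a genuine gap. You assert that $\tau$-equivariance together with the residue theorem ``forces the residue at $\hat e$ to arise from a $\bC$-linear combination of vectors $R_\gamma(\hat v)$'', and similarly for inconvenient vertices. This is the heart of the matter and you give no proof; it is not a purely combinatorial fact about graphs. The paper proves it analytically: one passes (by continuity of $\trop_{\Xi^k}$) to a family over $\Delta^*$, takes the Poincar\'e dual $\gamma_t=\mathrm{PD}(\omega_t)$, uses $\tau^*\omega=\zeta\omega$ to obtain $k\gamma_t=\sum_i\zeta^i\tau_*^i\gamma_t$, and then chooses a basis of $H_1(\hat X_t;\bZ)$ along a real ray that limits to a basis refining a basis of $H_1(\hat G;\bZ)$ by simple cycles. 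Intersecting with vanishing cycles $\alpha_t^{(\iota)}$ and passing to the limit yields exactly the identity $\sum_j c^{(j)} R_{\beta_j^{\trop}}(\hat v)_{\hat h_\iota}=k\,r_{\hat q_\iota}$, from which the existence of effective/admissible cycles or an independent pair is read off. You also omit the truncation trick: to guarantee that the cycle you find lies \emph{at or above} the relevant level $L$, the paper first replaces the family by one limiting to $\hat X_{0,\geq L}$, so that any cycle in its dual graph automatically satisfies this constraint.
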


\begin{myrem} \label{rem:MUW_recovered}
	Let us explain how to recover \cite[Theorem~6.3]{MUW17_published} from Theorem~\ref{thm:main_theorem} for $k = 1$. Recall that \cite[Theorem~6.3]{MUW17_published} states the following: the pair $(\Gamma, D)$ for $D$ as above is realizable if and only if
    \begin{itemize}
        \item[(i')] For every inconvenient vertex (in the sense of \cite[Definition~6.2]{MUW17_published}) $v$ in $\Gamma$ there is a simple cycle in $\Gamma$ through $v$ that does not pass through any node on a level below $v$.
        \item[(ii')] For every horizontal edge $e$ in $\Gamma$ there is a simple cycle passing through $e$ which does not pass through any node on a level below $e$.
    \end{itemize}
	To see that these conditions are equivalent to ours, note that for $k=1$ the identity on $\Gamma$ is the only tropical normalized cover.
	Now assume $(\id : \Gamma \to \Gamma, D)$ satisfies the conditions (i), (ii), and (iii) of Theorem~\ref{thm:main_theorem}. Every inconvenient vertex in the sense of \cite{MUW17_published} is inconvenient in our sense as well. Furthermore, every effective or admissible cycle does not pass through any lower level. Hence $(\Gamma, D)$ satisfies (i') and (ii') as well. 
	
	Conversely, suppose (i') and (ii') hold. First note that the only type of illegal vertex for $k=1$ is $(-1,1)$ and such a vertex does not admit a simple cycle ``at or above level'' through the incident horizontal edge. Thus (ii') ensures that there are no illegal vertices. 
	The next observation is that for $k = 1$ a cycle $\gamma$ is effective for every half-edge at lowest level that $\gamma$ passes though. 
	In particular, (ii) holds.
	Furthermore, there are only two kinds of inconvenient vertices: the ones in Proposition~\ref{prop:residue_map_origin_missing}~(i) and the ones in Proposition~\ref{prop:residue_map_abelian_coprime}. 
	The former is inconvenient in the sense of \cite{MUW17_published} as well. Hence, (i') provides the necessary effective cycles.
	The other kind of inconvenient vertex is not an issue for $k=1$: all simple cycles use precisely two half-edges incident to each vertex they pass through, and thus the residues at $\geq 3$ horizontal half-edges may always be chosen sufficiently generic. In other words, the cycles provided by (ii') contain an independent pair.
\end{myrem}

We split the proof of Theorem \ref{thm:main_theorem} in three parts. First we prove that the conditions in the theorem are sufficient (resp.\ necessary) for realizability over the base field $\bC$ endowed with the trivial valuation. Afterwards we generalize the result to arbitrary algebraically closed base fields of characteristic~0.

\begin{mylem} \label{lem:rational_edge_lengths}
	Let $\cT \subseteq \tropMSD$ be the locus defined by the conditions of Theorem~\ref{thm:main_theorem}.
	Then $\cT$ is closed in $\tropMSD$. Moreover, let $\cQ \subseteq \tropMSD$ be the locus of tropical normalized covers $\hat \Gamma \to \Gamma$ such that all edge-lengths of $\Gamma$ are rational. Evidently, $\cQ$ is dense $\tropMSD$. But also $\cT \cap \cQ$ is dense in $\cT$.
\end{mylem}

\begin{proof}
	The first part of the claim is quite clear and we focus on showing that $\cT \cap \cQ$ is dense in $\cT$. So let $\hat \Gamma \to \Gamma$ be a tropical normalized cover which satisfies all conditions of Theorem~\ref{thm:main_theorem}. Our goal is to write $\Gamma$ as a limit of tropical curves $\Gamma_j$ with rational edge-lengths such that the combinatorial type for each $j$ is the same as that of $\hat\Gamma \to \Gamma$. In particular this means that all special cycles $\gamma$ appearing in the conditions of Theorem~\ref{thm:main_theorem} are still cycles of the same kind for each $\Gamma_j$ since the conditions on being effective, admissible, or part of an independent pair of cycles do in fact not depend on the edge-lengths. 
	Denote the edge-length function of $\Gamma_j$ by $l_j$ and the slope of $f$ on a given (oriented) edge $e$ by $s(e)$.
	The only condition that needs to be ensured in this approximation is 
	\[ \sum_{e \in \gamma} l_j(e) s(e) = 0 \]
	for all $j$. 
	
	Let $Q$ be the $\bQ$-vector space spanned by the edge lengths $l(e) \in \bR$ of $\Gamma$. This is a finite dimensional vector space (because $\Gamma$ has only finitely many edges). Choose a basis $B = \{b^{(i)}\}_i$ of $Q$ with $b^{(i)} \in \bR$ and express the edge-lengths of $\Gamma$ as $l(e) = \sum_{i} l(e)_i \, b^{(i)}$. Now approximate each basis element by rational numbers $b^{(i)} = \lim b^{(i)}_j$ and define $l_j(e) = \sum_i l(e)_i \, b^{(i)}_j$. Then 
	\begin{equation} \label{eq:approximate_cycles}
		\sum_{e \in \gamma} l_j(e) s(e) = \sum_{e \in \gamma} \sum_i l(e)_i \, b^{(i)}_j s(e) = \sum_i \Big( \sum_{e \in \gamma} l(e)_i \, s(e) \Big) b^{(i)}_j \ .
	\end{equation}
	But now notice that $\gamma$ is cycle in $\Gamma$, i.e.
	\[ \sum_{e \in \gamma} l(e) s(e) = \sum_{i} \Big( \sum_{e \in \gamma} l(e)_i \, s(e) \Big) b^{(i)} = 0 \ . \]
	Since the $b^{(i)}$ are a vector space basis it follows that $\sum l(e)_i \, s(e) = 0$ for all $i$. Using this in Equation~\eqref{eq:approximate_cycles} completes the proof.
\end{proof}

\begin{myprop}
	The conditions in Theorem~\ref{thm:main_theorem} are sufficient for realizability over the base field $\bC$.
\end{myprop}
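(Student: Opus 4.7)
The plan is to follow the two-step strategy sketched in the introduction: for every vertex $v$ of $G$ realize $\pi^{-1}(v) \to v$ by a normalized cover of smooth curves with meromorphic $k$-differentials, glue these pieces into a normalized cover of a twisted $k$-differential in the boundary of $\PMSD[k]{g}{n}{1,\ldots,1}$, and then invoke Theorem~\ref{thm:BCGGM_main_theorem} to smooth the glued object into a $\tau$-equivariant family over $\Spec\bC[[t]]$ whose tropicalization is exactly $(\pi:\hat\Gamma\to\Gamma,D)$. The sticky point is not the existence of each local realization, which is guaranteed by Theorem~\ref{thm:empty_primitive_strata} once condition~(i) excludes all illegal strata, nor the smoothing step, which follows directly from Theorem~\ref{thm:BCGGM_main_theorem} applied to the lift of the level function supplied by Lemma~\ref{lem:construction_of_k-enhancement}. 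The real content is the production of a coherent system of residues on $\hat G^+$ that simultaneously satisfies the abelian residue theorem at every vertex, is equivariant under $\tau$, respects the matching residue condition at horizontal edges, obeys the global residue condition at every level, and induces $k$-residues on $G^+$ that at each vertex lie in the image of the local (possibly non-surjective) residue map described in Propositions~\ref{prop:residue_map_origin_missing}, \ref{prop:residue_map_line_missing}, \ref{prop:residue_map_abelian_coprime} and~\ref{prop:residue_map_triangular}.

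I would construct this assignment level by level, from the top level $0$ downward, only ever modifying residues by assigning values along $\tau$-orbits of simple cycles in $\hat G^+$ as in Definition~\ref{def:vector_of_cycle}. This guarantees automatically that the residue theorem at every vertex, $\tau$-equivariance, and any constraint already established on strictly higher levels are preserved throughout the induction. Fix a level $L$ and assume that residues on $\hat G^+_{>L}$ have been chosen; the GRC at level $L$ is then a finite system of linear equations in the residues at the upper endpoints of the vertical edges crossing into level $L$, one equation per connected component of $\hat X_{>L}$ without a prescribed pole. I would first solve this linear system for a sufficiently generic initial choice, and then apply the cycle-modification trick twice: once to force a nonzero residue at every horizontal edge at level $L$ using the effective cycle provided by condition~(ii), and once at every inconvenient vertex $v$ at level $L$ to steer the induced $k$-residue vector at $v$ into the image of $\Res^{k_v}_{g(v)}(\mu'_\red(v))$ using either the admissible cycle or the independent pair of cycles furnished by condition~(iii).

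Once the induction terminates at the bottom level, Lemma~\ref{lem:induced_k_residues} converts the chosen residues on $\hat G^+$ into $k$-residues on $G^+$ lying in the image of the local residue map at each vertex, so each vertex $v$ is realizable as a $d_v$-th power of a primitive $k_v$-differential on a smooth curve $X_v$, and passing to canonical covers yields the local normalized cover of $\pi^{-1}(v)\to v$. Gluing these pieces along the nodes prescribed by $\hat G^+ \to G^+$ produces a compatible normalized cover of a twisted $k$-differential, and Theorem~\ref{thm:BCGGM_main_theorem}, applied with the level function derived from $f$ and rescaled by a sufficiently divisible $N$ so that the integrality condition~\eqref{eq:divcover} holds, furnishes the desired smoothing with the prescribed edge lengths (absorbing $N$ into the uniformizer). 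The hard part is the inductive step: one must verify that the vectors $R^{k_v}_\gamma(v)$ furnished by conditions~(ii) and~(iii) span enough of the relevant residue space at level $L$ to achieve the GRC together with the image-of-residue-map conditions at all inconvenient vertices and horizontal edges of that level simultaneously, without disturbing any choice previously made above. Conditions~(ii) and~(iii) have been calibrated in exactly the way needed to make this simultaneous adjustment possible.
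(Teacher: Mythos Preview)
Your overall architecture (realize each vertex, glue to a multi-scale $k$-differential in the boundary, then smooth via Theorem~\ref{thm:BCGGM_main_theorem}) matches the paper's, but two aspects of the execution differ from the paper and deserve comment.

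First, the paper opens with a reduction you omit: it argues that tropical normalized covers with rational edge lengths are dense in $\tropMSD$, that the locus cut out by conditions (i)--(iii) is closed, and that $\trop_{\Xi^k}$ is continuous and closed (Lemma~\ref{lem:trop_N}); hence it suffices to realize covers with integer edge lengths. This is what makes the local equations $xy = t^{N\cdot(\text{integer})}$ in Theorem~\ref{thm:BCGGM_main_theorem} produce the prescribed lengths. Your remark about ``absorbing $N$ into the uniformizer'' handles the factor $N$ but not a priori irrational edge lengths.

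Second, your level-by-level induction and the step ``first solve this linear system for a sufficiently generic initial choice'' are more machinery than the paper uses. The paper simply initializes \emph{all} residues on $\hat G^+$ to $0$, which trivially satisfies the residue theorem at every vertex, the MRC, the GRC at every level, and $\tau$-equivariance. It then collects \emph{all} the cycles $\gamma_1,\ldots,\gamma_\lambda$ supplied by conditions (ii) and (iii) and assigns parameters $r_1,\ldots,r_\lambda$ along their $\tau$-orbits simultaneously; by construction (see the discussion after Definition~\ref{def:vector_of_cycle}) every one of the global compatibility conditions is preserved under each such modification. At a given vertex $v$ the resulting $k_v$-residue vector is $\sum_i r_i^{k_v} R^{k_v}_{\gamma_i}(v)$, and conditions (ii) and (iii) guarantee precisely that the span of the $R^{k_v}_{\gamma_i}(v)$ is not contained in the positive-codimension complement of $\Im\Res^{k_v}_{g(v)}(\mu'_\red(v))$. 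A single generic choice of $(r_1,\ldots,r_\lambda)$ then works at every vertex at once. In particular there is never a separate ``solve the GRC'' step (starting at zero already solves it), nor any need to worry about disturbing higher levels. Your approach can be made to work, but it trades this clean genericity argument for bookkeeping that the paper avoids; note also that the GRC at level $L$ constrains the residues at the poles on level $L$, not at the upper half-edges as you wrote.
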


\begin{proof}
	In the following we will show that a tropical normalized cover $\pi : \hat\Gamma \to \Gamma$ with integer edge lengths on $\Gamma$ which satisfies the conditions of Theorem~\ref{thm:main_theorem} is realizable. This is indeed sufficient to prove the proposition: by Lemma~\ref{lem:rational_edge_lengths} the closure of the locus of tropical normalized covers with rational edge-lengths meeting the conditions of Theorem~\ref{thm:main_theorem} is dense in the locus of all covers meeting the conditions of the theorem. Moreover, by Lemma~\ref{lem:trop_N} $\trop_{\Xi^k}$ is closed, thus the realizability locus in $\tropMSD$ is closed. Combined, these arguments give the reduction step to covers with rational edge lengths. 
	But then again if $\pi$ is realizable then so is the cover obtained by rescaling the edge lengths with a global constant, i.e. we may even restrict to integer edge lengths.
	
	Now assume that the conditions of Theorem~\ref{thm:main_theorem} hold for $\pi$ and $\Gamma$ has integer edge lengths. Ultimately we want to realize $\pi$ by a normalized cover $\hat X \to X$ of smooth curves over a non-Archimedean field with residue field $\bC$, such that the $k$-differential on the base is of type $(1, \ldots, 1)$. To do so, we want to choose for every half-edge in $\hat \Gamma^+$ with $o$-value $\leq -1$ and every half-edge in $\Gamma^+$ with $o$-value $\leq -k$ a ($k$-)residue (i.e.\ a complex number) such that all of the following hold.
	
	For every vertex $v \in \Gamma^+$ there exists a smooth curve $X_v$ with meromorphic $k$-differential $\eta_v$ realizing $v$. 
	More precisely, $X_v$ is supposed to be of genus $g(v)$ with distinguished points $z_h \in X_v$ for every half edge $h$ incident to $v$ such that $\ord_{z_h} \eta_v = o(h)$ and $\Res^k_{z_h}\eta_v$ is the value chosen in the beginning and $\eta_v$ is holomorphic and nonzero outside of $\{z_h\}_h$. Furthermore, $\eta_v$ is supposed to be the $d_v$-th power of a primitive $k_v$-differential of type $\mu_\red'(v)$. 
	Yet again further, we require each of the connected components $\hat X_{\hat v}$ of the (uniquely determined) normalized cover of $X_v$ to realize one of the vertices $\hat v \in \pi^{-1}(v)$, again such that orders of the meromorphic abelian differentials $\omega_{\hat v}$ match the $o$-values on $\hat \Gamma^+$ and the residues coincide with the chosen values from the beginning. 
	Finally, we need to do all of this such that the normalized covers $\coprod \hat X_{\hat v} \to X_v$ glue into a normalized cover of nodal curves $\hat X \to X$ with dual graphs given precisely by $\hat \Gamma$ and $\Gamma$.
	Once this is achieved, we obtain the desired normalized cover of smooth curves with deformation parameters corresponding to the edge-lengths of $\hat \Gamma$ and $\Gamma$ by means of Theorem \ref{thm:BCGGM_main_theorem}. 
	
	We note some dependencies among these requirements. Specifying residues on $\hat \Gamma^+$ that satisfy the condition imposed by the residue theorem and that are compatible with the deck transformation $\tau$ already determines the $k$-residues on $\Gamma^+$. This ensures the realizability of $v$ in the above sense if the induced $k_v$-residues are contained in the image of the $k_v$-residue map $\Res^{k_v}_{g(v)}(\mu_\red'(v))$. In this case realizability of all of the $\hat v \in \pi^{-1}(v)$ is immediate. When it comes to global compatibility note first that compatibility with the level structure is already built into the definition of enhanced level graphs. Beyond that, we only need to ensure MRC and GRC for the cover $\pi : \hat \Gamma^+ \to \Gamma^+$. To summarize, our goal is to choose residues on $\hat \Gamma^+$ such that:
	\begin{itemize}
		\item For each $\hat v \in \hat \Gamma^+$ the condition imposed by the residue theorem is satisfied. Moreover, MRC and GRC are satisfied.
		\item Residues on $\hat \Gamma^+$ are compatible with $\tau$, i.e.\ the residue of $\tau(h)$ is precisely $\zeta$ times the residue at $h$.
		\item The $k$-residues which are given on $\pi(h)$ as the $k$-th power of the residue at $h$ make every vertex of $\Gamma$ realizable. 
	\end{itemize}
	
	Let us now argue that a suitable choice of such residues exists. We start by initializing all residues with 0. Let $\gamma_1, \dots, \gamma_\lambda$ be all the simple cycles in $\hat \Gamma^+$ that exist by assumption, i.e.\ the cycles containing the horizontal edges and all kinds of inconvenient vertices. For each $\gamma_i$ we choose and fix an orientation. Note that at this point the first two items of our list of requirements are already satisfied. By construction of the process of ``assigning a residue $r_i$ along the $\tau$-orbit of $\gamma_i$'', these conditions continue to hold after doing so.
	
	Let us now pick numbers $r_1, \dots, r_\lambda \in \bC$ to be assigned along the orbits of the $\gamma_i$ such that the third and final condition is met. This amounts to choosing the $r_i$ sufficiently generic such that no undesirable cancellation happens. More precisely, after all the residues have been assigned, the resulting $k_v$-residues at a vertex $v \in \Gamma^+$ are
	\[ \sum_{i = 1}^{\lambda} r_i^{k_v} R^{k_v}_{\gamma_i}(v) \]
	and this has to lie in the image of the $k_v$-residue map. At every vertex this amounts to avoiding a locus of positive codimension in $\bC^{p+s}$. At the same time, the values that can be achieved using the given $\gamma_i$ form a vector space $V_v$. By assumption we have for every deficit in surjectivity of the residue map an admissible cycle or an independent pair of cycles, i.e.\ $V_v$ is not fully contained in the complement of the image of the residue map. Hence a suitable choice for each $r_i$ is possible and we are done.
\end{proof}

Let $\hat X$ be a smooth complex curve, and denote by $\PD : H^1_{\dR}(\hat X; \bR) \to H_1(\hat X; \bR)$ the map given by Poincar\'e duality.
By abuse of notation, we denote the induced map
\begin{align*}
	\PD : \Omega^1(\hat X) &\longrightarrow H_1(\hat X; \bC) \\
	\omega &\longmapsto \PD \big(\Re(\omega) \big) \oplus i \cdot \PD \big(\Im(\omega) \big)
\end{align*}
by the same symbol.
By naturality of $\PD$ there is a commutative diagram
\begin{equation} \label{eq:diagram_Poincare_duality}
	\begin{tikzcd}
		\Omega^1(\hat X) \ar[d, "\tau^*"] \ar[r, "\PD"] & H_1(\hat X; \bC) \\
		\Omega^1(\hat X) \ar[r, "\PD"] & H_1(\hat X; \bC) \ar[u, "\tau_*"'].
	\end{tikzcd}
\end{equation}

\begin{myprop}
	The conditions in Theorem~\ref{thm:main_theorem} are necessary for realizability over the base field $\bC$.
\end{myprop}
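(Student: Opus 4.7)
The plan is to argue by contrapositive from a realization $(\pi : \hat\cX \to \cX, \bs, \omega)$ over a non-Archimedean field $K$ with residue field $\bC$, whose stable model has dual graphs recovering $\hat\Gamma \to \Gamma$. I denote the smooth generic fibers by $\hat X \to X$ and use $\omega$ for both the global differential on $\hat\cX$ and its restriction to $\hat X$; by construction $\tau^\ast\omega = \zeta\omega$. Condition (i) is then immediate: if some $v \in \Gamma^+$ were illegal, then the irreducible component $X_v$ of the special fiber would have to carry a $k_v$-differential in the primitive stratum $\Omega^{k_v}\cM_{g(v)}(\mu_\red'(v))^{\prim}$, which is empty by Theorem \ref{thm:empty_primitive_strata} (the list in Definition \ref{def:illegal_and_splitting_vertices} is precisely a translation of that theorem), contradicting the existence of the realization.

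The crucial ingredient for (ii) and (iii) is Poincar\'e duality in the form of diagram \eqref{eq:diagram_Poincare_duality}: $\tau^\ast\omega = \zeta\omega$ translates into $\tau_\ast\PD(\omega) = \zeta\,\PD(\omega)$, so $\PD(\omega)$ lies in the $\zeta$-eigenspace of $\tau_\ast$ acting on $H_1(\hat X;\bC)$. I would then invoke the retraction of $\hat X^\an$ onto the skeleton $\hat\Gamma$ to relate homology on the smooth fiber to combinatorial data: each simple oriented cycle $\gamma$ in $\hat\Gamma$ lifts (modulo vanishing cycles) to a class $\tilde\gamma \in H_1(\hat X;\bZ)$, and to each node $\hat e$ is associated a vanishing cycle $\delta_{\hat e}$ whose period $\int_{\delta_{\hat e}}\omega$ equals $2\pi i$ times the residue of $\omega$ at that node. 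This identification would be established from the local description \eqref{eq:kdifflocal} together with the plumbing coordinates of Theorem \ref{thm:BCGGM_main_theorem}(iv).

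With this dictionary, (ii) follows quickly: at a horizontal edge $\hat e$ both incident orders are $-1$, so $\omega$ has a simple pole at the node with automatically nonzero residue. Then $\PD(\omega)$ pairs nontrivially with $\delta_{\hat e}$, and expanding $\PD(\omega)$ in a basis of the $\zeta$-eigenspace adapted to the retraction and the $\tau$-action produces a cycle $\gamma$ in $\hat\Gamma$ through $\hat e$ whose contribution is exactly the vector $R_\gamma(\hat v)$ of \eqref{eq:definition_R_vector} with nonzero $\hat e$-component, i.e.\ an effective cycle. For (iii), the realized $k_v$-residue vector at an inconvenient $v$ must lie in $\Im\Res^{k_v}_{g(v)}(\mu_\red'(v))$; by the same duality argument, this vector is a $\bC$-linear combination of vectors $R^{k_v}_{\gamma_i}(v)$ indexed by simple cycles $\gamma_i$ through preimages of $v$. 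Hence the $\bC$-span of these vectors is not entirely contained in the complement of $\Im\Res^{k_v}_{g(v)}(\mu_\red'(v))$, which is a finite union of proper linear subvarieties (augmented in the triangular case of Proposition \ref{prop:residue_map_triangular}(ii) by two-dimensional subvarieties) as listed in Propositions \ref{prop:residue_map_origin_missing}, \ref{prop:residue_map_line_missing}, \ref{prop:residue_map_abelian_coprime}, and \ref{prop:residue_map_triangular}. A short case analysis then shows that either some single $R^{k_v}_{\gamma_i}(v)$ is already admissible, or the span of two of them must exit the forbidden locus, yielding an independent pair.

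The main obstacle is the rigorous identification of the combinatorial vectors $R_\gamma(\hat v)$ from Definition \ref{def:vector_of_cycle} with period integrals of $\omega$ over cycles lifting the $\tau$-orbit of $\gamma$, with the weights $\zeta^i$ correctly accounted for. This reduces to a careful plumbing analysis around each horizontal and vertical node, using \eqref{eq:kdifflocal} to relate Laurent coefficients of $\omega$ to periods over vanishing cycles, together with $\tau^\ast$-equivariance to collapse the $\tau$-orbit into the single factor $\sum_i \epsilon_i \zeta^i$ appearing in \eqref{eq:definition_R_vector}. Once this identification is in hand, the three conditions drop out as above.
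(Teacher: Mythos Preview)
Your approach is essentially the paper's: Poincar\'e duality, the eigenvector identity for $\PD(\omega)$, vanishing-cycle periods equal to residues, and expansion in a basis of cycles retracting to the dual graph. Two points, however, are not merely details to be filled in but genuine gaps.

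First, you never ensure that the cycles you extract stay \emph{at or above} the level $L$ of the horizontal edge or inconvenient vertex under consideration. By Definition~\ref{def:vector_of_cycle}, $R_\gamma(\hat v)=0$ whenever $\gamma$ dips below the level of $\hat v$, so a cycle passing through lower levels is useless regardless of its intersection with the vanishing cycle. The paper handles this by a truncation trick: it replaces the family by the family corresponding to $\hat X_{0,\geq L}$ (the connected component containing the edge or vertex), which lies in some holomorphic stratum and can itself be smoothed. All cycles in the dual graph of the truncated special fiber then automatically live at or above level $L$, and any effective/admissible cycle found there is also one in $\hat G^+$. Without this step, your basis expansion could produce only cycles that descend below $L$ and hence contribute zero to $R_\gamma(\hat v)$ despite pairing nontrivially with the vanishing cycle in homology.

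Second, you work directly over a non-Archimedean field and invoke the Berkovich retraction, but then want Poincar\'e duality on $H_1(\hat X;\bC)$ for the generic fiber $\hat X$---which is a curve over a non-Archimedean field, not over $\bC$. The paper resolves this by first using continuity of $\trop_{\Xi^k}$ to reduce to covers defined over a finite extension of $\bC(t)$, then passing to the equivalent $\bC$-analytic picture of a family over the punctured disc $\Delta^\ast$. There Poincar\'e duality, vanishing cycles, and limits $t\to 0$ along a real ray (needed to make a consistent choice of homology basis across the family) all make literal sense. Your sketch conflates the two frameworks.
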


\begin{proof}
	Let $[\hat \Gamma \to \Gamma] = \trop_{\Xi^k}( \pi : \hat X \to X, \bs, \omega )$ be given. We want to show that the tropical normalized cover satisfies the conditions of Theorem \ref{thm:main_theorem}. Since $\trop_{\Xi^k}$ is continuous (Lemma \ref{lem:trop_N}) and the locus in $\tropMSD$ which is cut out by the conditions of Theorem~\ref{thm:main_theorem} is closed, it suffices to show this for any $\trop_{\Xi^k}(\pi : \hat X \to X)$ for $\pi$ taken from a dense subset of $\bP \Omega^k {\cM}_{g, n}(1, \ldots, 1)^\an$.
	Recall that the field of Puisseux series $\bC \{\!\{t\}\!\}$ is algebraically closed and has a non-trivial non-Archimedean valuation. By \cite[\nopp 2.6]{Gubler_guide} the set of $\bC \{\!\{t\}\!\}$-valued points of $\bP \Omega^k {\cM}_{g, n}(1, \ldots, 1)$ is dense in $\bP \Omega^k {\cM}_{g, n}(1, \ldots, 1)^\an$ (use \cite[Proposition~3.8]{Uli_stack_quotient} to pass from stacks to spaces). 
	But now any such point $(\pi : \hat X \to X, \bs, \omega)$ may be interpreted as the smooth generic fiber of a family of normalized covers of twisted $k$-differentials defined over a discrete valuation ring of some finite extension of $\bC(t)$. 
	Hence, restricting to this situation, we may now take the equivalent $\bC$-analytic point of view and consider this data as a family of normalized covers $(\pi_t : \hat X_t \to X_t)_t$ over the punctured unit disc $\Delta^\ast$. In particular, each $\hat X_t$ and $X_t$ is a smooth curve over $\bC$. 
	
	Let $\hat X_0 \to X_0$ denote the admissible cover obtained as the limit of the family within $\bP \Xi^k \compact{\cM}_{g,n}(1, \ldots, 1)$ for $t \to 0$. By assumption, the dual graphs of $\hat X_0$ and $X_0$ are the underlying unmetrized graphs of $\hat \Gamma$ and $\Gamma$ respectively. Furthermore, the enhanced level graph structures induced by $\hat X_0$ and $X_0$ are precisely $\hat \Gamma^+$ and $\Gamma^+$ respectively (see the argument in the proof of Lemma \ref{lem:trop_N}).
	We check that the conditions of Theorem \ref{thm:main_theorem} hold for these.

	There cannot be any illegal vertex $v$ in $\Gamma$. Otherwise the restriction $\coprod_{\hat v \in \pi^{-1} (v) } \hat X_{0,\hat v} \to X_{0,v}$ of the central fiber would provide an element in a stratum that is empty by Theorem~\ref{thm:empty_primitive_strata}, a clear contradiction.

	Next, we show the existence of effective cycles for horizontal edges and admissible cycles or independent pairs of cycles for all inconvenient vertices.
	Fix such an edge or vertex and let $L$ denote its level in $\Gamma^+$.
	Recall that we are only interested in cycles at or above level $L$.
	To ensure that any cycle we find during this proof satisfies this condition, we use the following trick.
	Take the truncated cover $\hat X_{0,\geq L} \to X_{0,\geq L}$ at or above level $L$. After restricting to the connected component that contains the edge or vertex under consideration, we obtain a twisted differential from some holomorphic stratum $\PMSD[k]{g'}{n'}{\mu'}$.
	It can be written as limit of a family $(\pi'_t : \hat X'_t \to X'_t)_t$ of smooth normalized covers.
	Moreover, any cycle in the dual graph of $\hat X_{0,\geq L}$ is also a cycle in the dual graph of $\hat X_0$ at or above level $L$ that inherits the property of being effective (resp.\ admissible).
	Thus it suffices to find suitable cycles in $\hat X_{0,\geq L}$.
	Hence we will implicitly work with the family $\pi'_t$ and assume all our cycles to be at or above level $L$.

	Let $\omega_t$ be the abelian differential on $\hat X_t$ and let $\gamma_t := \PD(\omega_t)$. By the commutativity of Diagram \eqref{eq:diagram_Poincare_duality} we have
	\[ \gamma_t = \PD(\omega_t) = \tau_\ast \PD(\tau^\ast \omega_t) = \tau_\ast \PD(\zeta \omega_t) = \zeta \tau_\ast \gamma_t. \]
	Repeating the argument with $\tau$ being replaced by a power $\tau^i$ for $i = 0, \ldots, k-1$ and summing the resulting equations we obtain
	\begin{equation} \label{eq:gamma_is_effective}
            k\gamma_t = \sum_{i = 0}^{k-1} \zeta^i \tau^i_\ast \gamma_t.
	\end{equation}
	
	Now consider a vertex $v \in \Gamma^+$ and let $h_1, \dots, h_{p+s}$ be the half-edges incident to $v$ with $o$-value $\leq -k$ and divisible by $k$. Let $\hat v$ be a preimage of $v$ and let $\hat h_1, \dots, \hat h_{p+s}$ be half-edges incident to $\hat v$ such that $\hat h_\iota$ is a preimage of $h_\iota$. Let $\alpha_t^{(1)}, \ldots, \alpha_t^{(p + s)}$ be families of simple closed cycles in $\hat X_t$ which get pinched into the corresponding nodes $\hat q_1, \dots, \hat q_{p+s}$.
	Observe that $\int_{\alpha_t^{(\iota)}} \omega_t$ converges for $t \to 0$ to the residue $r_{\hat q_\iota}$ of the limiting twisted differential at $\hat q_\iota$. By Poincar\'e duality and equation~\eqref{eq:gamma_is_effective} this implies
	\begin{equation} \label{eq:gamma_cap_alpha_not_0}
            \lim_{t\to 0} \sum_{i = 0}^{k-1} \zeta^i \tau^i_\ast \gamma_t \cap \alpha_t^{(\iota)} = \lim_{t \to 0} k\gamma_t \cap \alpha_t^{(\iota)} = k r_{\hat q_\iota} \qquad \text{for } \iota = 1, \dots, p+s \ .
	\end{equation}

        Next we want to make a consistent choice of a basis of $H_1(\hat X_t; \bC)$. In general, such a choice is not possible across the entire family. Hence we restrict our family to a real ray in $\Delta^\ast$, i.e.\ from now on only consider $t \in (0,1)$.
        Recall that there is a surjective map
        \[
                \Phi : H_1(\hat X_0; \bZ) \longrightarrow H_1(\hat G; \bZ)
        \]
        into the homology of the dual graph $\hat G$ of the special fiber $\hat X_0$.
        Let $(\beta^\trop_j)_{j \in \{1, \dots, b_1(\hat G)\}}$ be a basis of $H_1(\hat G; \bZ)$ consisting of simple cycles and let $\beta'_j \in H_1(\hat X_0; \bZ)$ be preimages of the $\beta^\trop_j$.
        Let $J := \{1, \dots, 2g(\hat X)\}$.
        We can complete the $(\beta'_j)_{j \in \{1, \dots, b_1(\hat G)\}}$ to a basis $(\beta'_j)_{j \in J}$ of $H_1(\hat X_0; \bZ)$ in such a way that $\beta'_j \in \ker \Phi$ for $i \in \{b_1(\hat G) + 1, \dots, 2g(\hat X_0)\}$.
		In other words, the new $\beta'_{j}$ have a representative with support in a single irreducible component $\hat X_{0,v}$ of $\hat X_0$.
        Chose cycles $\beta_j$ on a nearby surface $\hat X_t$ along the real ray such that $\beta_j$ converges to $\beta'_j$ for $t \to 0$.
        The $(\beta_j)_{j \in J}$ form a basis of $H_1(\hat X_t; \bZ)$ and $H_1(\hat X_t; \bC)$ for all $t$ in the real ray.

	With our chosen basis we may write $\gamma_t = \sum_{j \in J} c_t^{(j)} \beta_j$ for uniquely determined complex coefficients $c_t^{(j)}$ varying continuously for $t \in (0,1)$.
        Equation~\eqref{eq:gamma_cap_alpha_not_0} implies
        \begin{equation} \label{eq:gamma_in_basis_not_0}
                \lim_{t \to 0} \sum_{i=0}^{k-1} \sum_{j \in J} c_t^{(j)} \zeta^i \tau^i_* \beta_j \cap \alpha_t^{(\iota)} = k r_{\hat q_\iota} \qquad \text{for } \iota = 1, \dots, p+s.
        \end{equation}
        We claim that for all $\iota = 1, \dots, p+s$ we have
        \[
                \lim_{t \to 0} \sum_{i=0}^{k-1} \zeta^i \tau^i_* \beta_{j} \cap \alpha_t^{(\iota)} =
                \begin{dcases*}
                        R_{\beta_j^\trop}(\hat v)_{\hat h_\iota} & for $j = 1, \dots, b_1(\hat G)$ \\
                        0 & otherwise
                \end{dcases*}
        \]
        for $R_{\beta_j^\trop}(\hat v)_{\hat h_\iota}$ as in \eqref{eq:definition_R_vector}.
        For the first claim observe that for $j \in \{1, \dots, b_1(\hat G)\}$ the limit $\lim_{t \to 0} \beta_j \cap \alpha_t^{(\iota)}$ agrees with the coefficient of $\beta^\trop_j$ in front of the edge $\hat e \in \hat G$ corresponding to the node $\hat q_\iota \in \hat X_0$.
        Thus the claim follows by comparing equation~\eqref{eq:gamma_is_effective} with equation~\eqref{eq:definition_R_vector}.
        Here the sign appearing in \eqref{eq:definition_R_vector} is encoded in the intersection product of the implicitly oriented cycles $\beta_j$ and $\alpha_t^{(\iota)}$.
        For the second claim observe that for $j \in \{b_1(\hat G) + 1, \dots, 2g(\hat X)\}$ the cycle $\beta_j$ is chosen such that is does not intersect $\alpha_t^{(\iota)}$.
        Thus we may rewrite equation~\eqref{eq:gamma_in_basis_not_0} as
        \begin{equation} \label{eq:sum_of_residues}
                \sum_{j = 1}^{b_1(\hat G)} c^{(j)} R_{\beta_j^\trop}(\hat v)_{\hat h_\iota} = k r_{\hat q_\iota} \qquad \text{for } \iota = 1, \dots, p+s,
        \end{equation}
        where $c^{(j)} := \lim_{t \to 0} c_t^{(j)}$.

        Now assume that $h_{\iota_0}$ belongs to a horizontal edge. In this case, $r_{\hat q_{\iota_0}}$ is nonzero. Hence by equation~\eqref{eq:sum_of_residues}, there is a $j_0 \in J$ such that $R_{\beta_{j_0}^\trop}(\hat v)_{\hat h_{\iota_0}} \neq 0$.
        In particular $\beta_{j_0}^\trop$ is effective for $h_{\iota_0}$.

        Now assume that $v$ is inconvenient of type~I. In this case, the origin is not in the image of the residue map. Thus there is an $\iota_0$ such that $r_{\hat q_{\iota_0}}$ is nonzero.
        By the same argument as above, there is an $j_0$ such that $R_{\beta_{j_0}^\trop}(\hat v)_{\hat h_{\iota_0}} \neq 0$ by equation~\eqref{eq:sum_of_residues} and thus $\beta^\trop_{j_0}$ is admissible for $v$.

	Now assume that $v$ is inconvenient of type~II.
        In this case, the powers $(r_{\hat q_\iota}^{k_v})_\iota$ are contained in the image of the residue map. 
        Recall that the complement of the image of the residue map is an union of at most $2$-dimensional subvarieties $\bigcup_{\sigma'} W'_{\sigma'} \in \bC^{p+s}$.
        Thus the residues $(r_{\hat q_\iota})_\iota$ may not be contained in an union of at most $2$-dimensional subvarieties $\bigcup_\sigma W_\sigma \in \bC^{p+s}$, too: Each subvariety $W'$ in the image of the residue map gives rise to multiple subvarieties $W$ corresponding to choices of the $k$-th root.
        If there is an admissible cycle $\beta_j^\trop$ we are done.
        So assume that there is none.
        Moreover, assume for a contradiction that all vectors $(R_{\beta_j^\trop}(\hat v)_{\hat h_\iota})_\iota$ are contained in a single linear subspace $V$ of a subvariety $W_{\sigma_0}$.
        Then by equation~\eqref{eq:sum_of_residues}, the vector $(r_{\hat q_\iota})_\iota$ is contained in the same linear subspace $V$.
        But then the vector $(r_{\hat q_\iota})_\iota$ is not contained in the image of the residue map, which is a contradiction.
        Thus there is an independent pair of cycles $(\beta_{j_1}^\trop, \beta_{j_2}^\trop)$.
\end{proof}

	%generalizing to more general base field	
	So far we have shown the claim to hold over $\bC$. We now generalize to any algebraically closed base field of characteristic 0.

\begin{myprop}
	If Theorem~\ref{thm:main_theorem} is true over the base field $\bC$, it is true over any algebraically closed base field of characteristic 0.
\end{myprop}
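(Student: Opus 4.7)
The first observation is that the conditions (i), (ii), (iii) in Theorem~\ref{thm:main_theorem} are purely combinatorial: they depend only on the tropical normalized cover $\pi : \hat\Gamma^+ \to \Gamma^+$ together with the induced enhanced level graph structure, and never reference the base field. Hence the proposition amounts to showing that realizability itself is independent of the choice of algebraically closed base field of characteristic~$0$.

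For the sufficiency direction my plan is to revisit the construction given in the $\bC$-proof and verify that every ingredient extends verbatim to an arbitrary algebraically closed base field $K$ of characteristic~$0$. The key inputs are (a) the Gendron--Tahar description of the image of the residue map $\Res^k_g(\mu_\red)$ summarized in Propositions~\ref{prop:residue_map_origin_missing}--\ref{prop:residue_map_triangular}, (b) the classification of empty primitive strata in Theorem~\ref{thm:empty_primitive_strata}, and (c) the BCGGM/CMZ smoothing result Theorem~\ref{thm:BCGGM_main_theorem}. All three are statements in algebraic geometry over an arbitrary algebraically closed field of characteristic~$0$; the primitive strata, the residue map, and the moduli space $\PMSD[k]{g}{n}{\mu}$ are all defined over $\bQ$, and the cited descriptions hold over any such base. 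The genericity step of choosing $r_1, \dots, r_\lambda$ so as to avoid the prescribed proper subvarieties then works over $K$ because it amounts to finding a $K$-point outside a proper Zariski-closed subset of affine space, which exists whenever it exists over $\bC$.

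For the necessity direction I would use a Lefschetz-style descent. Suppose $(\pi : \hat X \to X, \bs, \omega)$ is a normalized cover defined over a non-Archimedean field $K'$ with algebraically closed residue field $K$ of characteristic~$0$ whose tropicalization is the given combinatorial datum $(\pi : \hat\Gamma \to \Gamma, D)$. The data of $\hat X \to X$ together with a stable model over the valuation ring of $K'$ is finitely generated, so it descends to a normalized cover over a countable-type valued subfield $K'_0 \subseteq K'$ whose residue field $K_0$ embeds into $\bC$. Extending the valuation of $K'_0$ to a complete algebraically closed non-Archimedean field $L$ with residue field $\bC$ yields, by base change, a normalized cover over $L$ with the same underlying stable model and the same tropicalization. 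Invoking Theorem~\ref{thm:main_theorem} over $\bC$ then gives the required combinatorial conditions. The main technical point to check carefully is the invariance of the tropicalization map under such an extension of non-Archimedean base fields with different residue fields; this is essentially the content of \cite[Lemma-Definition~2.2.7]{Viv13} and \cite[Theorem~1.2.1]{ACP} generalized to $\tropMSD$, and it is the only non-formal step in the whole transfer argument.
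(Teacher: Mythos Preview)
Your approach is workable in outline but considerably more laborious than the paper's, and your sufficiency direction has an unjustified step.

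The paper disposes of both directions in one stroke. For any valued extension $K \subseteq L$ of algebraically closed characteristic-$0$ fields there is a surjection $\big(\bP\Omega^k\cM_g(1,\ldots,1)_L\big)^\an \to \big(\bP\Omega^k\cM_g(1,\ldots,1)_K\big)^\an$ commuting with $\trop_{\Xi^k}$, by \cite[Proposition~3.7]{Gubler_guide}. Hence the image of $\trop_{\Xi^k}$ is the same over $K$ as over $\overline{\bQ}$ as over $\bC$; since the combinatorial conditions are field-independent, the theorem transfers immediately. Your necessity argument is essentially a hand-built version of this: the ``invariance of tropicalization under base change'' you flag as the one non-formal step is precisely the Gubler input the paper cites, and once you have it the descent-to-a-subfield maneuver is unnecessary.

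The real gap is in your sufficiency direction. You assert that Theorem~\ref{thm:BCGGM_main_theorem} and the Gendron--Tahar image computations are ``statements in algebraic geometry over an arbitrary algebraically closed field of characteristic~$0$'' and that $\PMSD[k]{g}{n}{\mu}$ is defined over $\bQ$. None of this is established in the paper or in the references \cite{BCGGM,CMZ19,GT20,GT21,GT_quadratic}: the smoothing theorem is stated and proved for $\Spec\bC[[t]]$ using analytic plumbing, and the residue-map images are computed with flat-surface techniques over $\bC$. One can indeed transfer these statements by a Lefschetz argument (the residue map is a morphism of $\overline{\bQ}$-schemes, non-emptiness of strata and constructible images are invariant under algebraically closed base change), but you do not give that argument---you simply assert the conclusion. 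The paper's route sidesteps this entirely: it never reruns the construction over $K$, it only needs that the \emph{image} of tropicalization is base-field-independent.
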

\begin{proof}
	If $K \subseteq L$ is a valued field extension of such fields, then there is a surjective map $\big( \bP \Omega^k \cM_{g}(1, \ldots, 1)_L \big)^\an \to \big( \bP \Omega^k \cM_{g}(1, \ldots, 1)_K \big)^\an$ and the following diagram commutes by \cite[Proposition~3.7]{Gubler_guide}
	\begin{equation*}
		\begin{tikzcd}[column sep = huge]
			\big(\bP \Omega^k \cM_{g}(1, \ldots, 1)_L \big)^\an \arrow[dr, "\trop_{\Xi^k}"] \arrow[d] & \\
			\big(\bP \Omega^k \cM_{g}(1, \ldots, 1)_K \big)^\an \arrow[r, "\trop_{\Xi^k}"'] & \tropMSD \ .
		\end{tikzcd}
	\end{equation*}
	Hence, the image of $\trop_{\Xi^k}$ does not depend on the base field (whether larger or smaller than $\bC$).
\end{proof}

\subsection{Dimensions}

In the abelian case \cite[Theorem~6.6]{MUW17_published} shows that the realizability locus is a pure dimensional generalized cone complex of dimension equal to $4g-4 = \dim \bP \Omega \cM_{g}(1,\dots,1)$.
From \cite[Theorem~1.1]{Uli15} we know that the realizability locus for $k \geq 2$ must be a generalized cone complex of dimension $\leq \dim \bP \Omega^k \cM_{g}(1,\dots,1) = (2+2k)(g-1)-1$ (see the discussion in Section~\ref{subsec:realizability_problem}).
Let us now prove Theorem~\ref{thm:dimension_realizability_locus} from the introduction and show that this bound is in fact attained and all maximal cones have the same dimension.
To do so we need two preparational statements.
The proof for the following lemma is the same as \cite[Lemma~6.8]{MUW17_published}.

\begin{mylem} \label{lem:dimension_spaned_cone}
	For every realizable tropical normalized cover $\pi : \hat \Gamma^+ \to \Gamma^+$ let $\Gamma^+_0$ be the level graph obtained by successively contracting edges in $\Gamma^+$ that have an $(n+1)$-valent genus zero node with $n \geq 1$ marked points at one of its ends.
    The dimension of the cone in the realizability locus with associated normalized cover $\pi$ is $1$ less than the number of levels plus the number of horizontal edges of $\Gamma^+_0$.
\end{mylem}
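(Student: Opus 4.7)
The strategy, in close parallel to \cite[Lemma~6.8]{MUW17_published}, is to reparameterize the cone via level data and horizontal edge lengths.

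I would fix a realizable combinatorial type $\pi : \hat\Gamma^+ \to \Gamma^+$. By the proof of Theorem~\ref{intro:thm:trop_k_Hodge_bundle}, the associated cone in $\tropMSD$ sits inside $\bR_{\geq 0}^{E(\Gamma)}$ and is cut out by the continuity conditions on the rational function $f$ satisfying $D = kK_\Gamma + (f)$. Since the realizability conditions of Theorem~\ref{thm:main_theorem} are purely combinatorial, the entire cone lies in the realizability locus whenever $\pi$ is realizable, so there is nothing additional to impose.

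The key step is the reparameterization: rather than tracking all edge lengths, I record the real-valued function $\phi$ which sends each level of $\Gamma^+$ to the common $f$-value of its vertices (well-defined up to a single global additive constant), together with the lengths of horizontal edges. Via the slopes read off from the enhancement (see~\eqref{eq:def_enhancement}), the length of any vertical edge $e$ with endpoints $v^+ \succ v^-$ is then recovered as $l(e) = \bigl(\phi(\ell(v^+)) - \phi(\ell(v^-))\bigr) / |s_e|$, while horizontal edges carry free positive lengths. This identifies the cone with the product of the $(L-1)$-dimensional space of strictly increasing $\phi$ modulo shift (where $L$ is the number of levels) and $\bR_{>0}^h$ for the $h$ horizontal edge lengths. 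The cycle continuity conditions are automatically satisfied since by construction $\phi$ gives a globally well-defined $f$. Thus the cone has dimension $(L - 1) + h$.

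It remains to verify that $\Gamma^+_0 = \Gamma^+$ in our setting, so that the formula matches the statement. A vertex as in the contraction rule would be a leaf of $\Gamma^+$ of genus $0$ with $n \geq 1$ legs. Such a leaf cannot arise: an original vertex of $\Gamma$ of graph-valence $1$ requires genus $\geq 1$ by stability, and a subdivision vertex introduced to place chips on an edge interior carries two edge half-edges and thus is not a leaf. Hence no contractions are performed and the claimed dimension agrees with $(L-1) + h$. The main delicate point is the reparameterization itself, and in particular verifying that the product decomposition covers the entire cone with no redundant or missing constraints; this follows because $\phi$ together with the horizontal lengths reconstruct $f$ (and hence all vertical edge lengths) globally.
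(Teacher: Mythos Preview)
Your reparameterization by level values and horizontal edge lengths is exactly the idea behind \cite[Lemma~6.8]{MUW17_published}, which is all the paper invokes; so the overall strategy matches. However, your final step---arguing that $\Gamma^+_0 = \Gamma^+$ always---does not go through, and this is precisely where the contraction in the lemma's statement earns its keep.

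The flaw is in the sentence ``an original vertex of $\Gamma$ of graph-valence $1$ requires genus $\geq 1$ by stability.'' A tropical normalized cover lives in the tropical Hurwitz space $H^\trop_{g' \to g, k}((k),\ldots,(k))/S_n$, where the base $\Gamma$ is only required to be stable \emph{with} its legs. A genus~$0$ vertex with one edge and $n \geq 2$ legs is perfectly stable there, and such rational tails genuinely occur as combinatorial types of realizable covers. For a concrete instance take $k=1$, $g=2$: let $\Gamma$ consist of a genus~$2$ vertex $w$ joined by a single edge to a genus~$0$ vertex $v$ carrying both legs. One checks that the divisor $D = 2v$ satisfies $D = K_\Gamma + (f)$ for the obvious $f$, that the identity cover is a realizable tropical normalized cover (the type $(1,1;-4)$ at $v$ is neither illegal nor inconvenient, and the forced residue $0$ at the unique pole is compatible with the GRC), and that $\Gamma^+$ has two levels and no horizontal edges. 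Your formula gives dimension $1$; the lemma's formula via $\Gamma^+_0$ gives dimension $0$.

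The point is that the realizability locus sits in $\tropHodge$, whose points are pairs $([\Gamma'],D)$ with $\Gamma' \in M_g^\trop$ stable \emph{without} legs. Passing from $\tropMSD$ to $\tropHodge$ via $\tgt$ therefore contracts such rational tails, collapsing exactly the degrees of freedom they contributed. Your $(L-1)+h$ computes the dimension of the cone in $\tropMSD$, not in the realizability locus. The fix is to carry out your reparameterization on $\Gamma^+_0$ rather than $\Gamma^+$: after contracting the tails, the remaining level values and horizontal edge lengths parameterize the image cone in $\tropHodge$, and you recover the stated formula. This is what \cite[Lemma~6.8]{MUW17_published} does, and it is also why the proof of the subsequent proposition takes the trouble to rearrange rational tails before invoking the lemma.
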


\begin{myprop} \label{prop:maximal_cone}
	Let $k \geq 2$ and let $\pi : \hat \Gamma^+ \to \Gamma^+$ be a realizable tropical normalized cover. Then $\pi$ is contained in a cone of dimension $(2+2k)(g-1)-1$.
\end{myprop}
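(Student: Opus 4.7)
The plan is to exhibit, for each realizable tropical normalized cover $\pi$, a realizable ``maximal degeneration'' $\pi'$ having $\pi$ as a face and whose associated cone in $\tropMSD$ has dimension exactly $(2+2k)(g-1)-1$. The combinatorial target is a $\pi'$ whose base graph $\Gamma'^+$ has only trivalent genus-$0$ vertices of two kinds---``mixed'' vertices carrying two edges and one leg, and ``pure'' vertices carrying three edges and no legs---together with an injective level function and no rational tails. Since mixed vertices have two incident edges, they escape the contraction rule of Lemma~\ref{lem:dimension_spaned_cone}, and the genus/leg constraints force exactly $b = 2k(g-1)$ mixed and $c = 2g-2$ pure vertices, so that $\Gamma'^+_0 = \Gamma'^+$ has $(2+2k)(g-1)$ vertices. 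Injectivity of the level function forces $\#\text{horizontal edges} = 0$ and $\#\text{levels} = (2+2k)(g-1)$, and Lemma~\ref{lem:dimension_spaned_cone} then yields the claimed cone dimension $(2+2k)(g-1)-1$.

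The first step is to check that at least one cover of this combinatorial type is realizable. This amounts to a direct verification of conditions (i)--(iii) of Theorem~\ref{thm:main_theorem}: each vertex is genus $0$ with exactly three special points, so the local residue map is easy to control using Section~\ref{subsec:image_residue_map}, and the few possibly inconvenient vertices can be redeemed by admissible cycles drawn from $\hat\Gamma'^+$, in close analogy with Example~\ref{exa:maximal_cone_tropicalHodge}.

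The second and main step is to embed the given realizable $\pi$ as a face of the cone of some such $\pi'$. I would take a realization $(\hat X \to X, \omega) \in \bP \Omega^k \cM_{g,n}(1, \ldots, 1)^\an$ of $\pi$ and, using properness of $\PMSD[k]{g}{n}{1, \ldots, 1}$, specialize it along a one-parameter family to a point of a $0$-dimensional boundary stratum; by Theorem~\ref{thm:BCGGM_main_theorem}, the tropicalization of this specialization is a maximally degenerate $\pi'$ with $\pi$ sitting as a face of the associated cone. The main obstacle is to guarantee that the reached stratum has the prescribed combinatorial shape. This follows from the toroidal structure of $\PMSD[k]{g}{n}{1, \ldots, 1}$ established in \cite{CMZ19}: its Thuillier skeleton, and hence the realizability locus inside $\tropMSD$ by the retraction argument of Section~\ref{subsec:realizability_problem}, is pure-dimensional of dimension $\dim \bP \Omega^k \cM_{g,n}(1, \ldots, 1) = (2+2k)(g-1)-1$. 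Combining this pure-dimensionality with the existence of a maximal cone from the first two steps completes the proof.
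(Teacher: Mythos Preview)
Your argument has a genuine gap at the crucial step. You assert that the Thuillier skeleton of $\PMSD[k]{g}{n}{1,\ldots,1}$ is pure-dimensional as a consequence of the toroidal (smooth orbifold) structure from \cite{CMZ19}, and then use this to conclude that every realizable $\pi$ sits in a maximal-dimensional cone. But a toroidal compactification, even a smooth one with normal-crossing boundary, does \emph{not} by itself force the skeleton to be pure-dimensional: a smooth compact orbifold can have boundary strata of positive dimension that are minimal, i.e.\ not contained in the closure of any deeper stratum. Pure-dimensionality of the skeleton is equivalent to the statement that every boundary stratum of $\PMSD[k]{g}{n}{1,\ldots,1}$ degenerates to a $0$-dimensional one---which is precisely the content of the proposition you are trying to prove. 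So the final step is circular.

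Relatedly, your use of properness to ``specialize along a one-parameter family to a point of a $0$-dimensional boundary stratum'' is not justified: properness produces a limit, but gives no control over which stratum the limit lands in. The paper's proof supplies exactly the missing ingredient. Given a boundary stratum $D_\pi$ of non-maximal codimension, one finds a level $L$ with $\dim\sigma_L(D_\pi)\geq 1$ and then invokes \cite[Th\'eor\`eme~1 and Corollaire~2]{Gen20}: projectivized strata of $k$-differentials contain no complete subvarieties, so the complete image $\sigma_L(\overline{D_\pi})$ must meet the boundary, producing a strictly deeper degeneration. Iterating yields a $0$-dimensional stratum in $\overline{D_\pi}$. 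Your first step (exhibiting one realizable maximal combinatorial type) is unnecessary for this approach and does not bridge the gap, since the existence of a single maximal cone says nothing about whether the particular $\pi$ at hand lies in its closure.
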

\begin{proof}
	Let $c(\pi)$ denote the number of levels minus $1$ plus the number of horizontal edges of $\Gamma^+$.
	As $\pi$ is realizable, the underlying cover of enhanced level graphs cuts out a boundary stratum $D_\pi \subseteq \PMSD[k]{g}{n}{1, \dots, 1}$ of codimension $c(\pi)$, i.e.\ for all multi-scale $k$-differentials $(\hat X \to X, \bs, \omega, \hat G^+ \to G^+) \in D_\pi$ the underlying cover of enhanced level graphs $\hat G^+ \to G^+$ agrees with $\pi$, see \cite[Proposition~1.3]{CMZ19}.
	We will prove that the closure $\overline D_\pi$ intersects a boundary stratum of maximal codimension, i.e.\ that there is a multi-scale $k$-differential $(\pi' : \hat X' \to X') \in \overline D_\pi$ with $c(\pi') = (2+2k)(g-1)-1$.
	
	Assume that we have found such a $\pi'$.
	We would like to use Lemma~\ref{lem:dimension_spaned_cone} to see that the tropicalization of $\pi'$ gives rise to a degeneration of $\pi$ that spans a cone of the claimed dimension.
	But $\pi'$ may contain trees of marked points as in Figure~\ref{fig:rational_tail}.
	(Here we assume that the $q_i$ are the zeros and poles coming from the half-edges where the tree is attached to the remaining graph.)
	To apply Lemma~\ref{lem:dimension_spaned_cone}, we need to rearrange any such tree of marked points.
	In a first step, we contract the tree as in Figure~\ref{fig:rational_tail}.
	The vertex obtained in this way will no longer be zero dimensional.

	\begin{figure}[htb]
		\centering
		\begin{minipage}{0.9\textwidth}
			\centering
			\begin{tikzpicture}[scale=1.5, decoration={markings, mark=at position .5 with {\arrow{>}}}]
	%%% rational tail %%%
	\fill 
	(0, 0) circle (2pt)
	(0.5, -0.5) circle (2pt)
	(1, -1) circle (2pt);
	
	\path[draw] (0,0) -- (0.6, -0.6);
	\path[draw] (0.9,-0.9) -- (1,-1);
	\path[draw, dotted] (0.6, -0.6) -- (0.9,-0.9);
	
	%legs
	\path[draw] (0,0) --++ (135:0.3);
	\draw (-0.3, 0.35) node {$q_1$};
	\draw (0, 0.35) node {$\cdots$};
	\path[draw] (0,0) --++ (45:0.3);
	\draw (0.3, 0.35) node {$q_n$};
	\path[draw] (0.5,-0.5) --++ (-135:0.3);
	\draw (0.2, -0.8) node {1};
	\path[draw] (1,-1) --++ (-135:0.3);
	\draw (0.7, -1.3) node {1};
	\path[draw] (1,-1) --++ (-45:0.3);
	\draw (1.3, -1.3) node {1};

	%%% contraction %%%
        \begin{scope}[xshift=70]
	\fill 
	(0, 0) circle (2pt);
	
	%legs
	\path[draw] (0,0) --++ (135:0.3);
	\draw (-0.3, 0.35) node {$q_1$};
	\path[draw] (0,0) --++ (45:0.3);
	\draw (0, 0.35) node {$\cdots$};
	\draw (0.3, 0.35) node {$q_n$};
	\path[draw] (0,0) --++ (-135:0.3);
	\draw (-0.3, -0.3) node {1};
	\draw (0, -0.3) node {$\cdots$};
	\path[draw] (0,0) --++ (-45:0.3);
	\draw (0.3, -0.3) node {1};
        \end{scope}

	%%% arrow %%%
	\draw[->,decorate,decoration={snake,amplitude=.5mm}] (1, 0) -- (1.8, 0);
\end{tikzpicture}
		\end{minipage}
		\caption{Contracting a rational tail.}
		\label{fig:rational_tail}
	\end{figure}

	We will now explain how to degenerate this vertex into a chain of zero dimensional vertices without producing a tree of marked points.
	Our argument works by induction on the dimension of the vertex, i.e.\ we will explain how to split off a zero dimensional vertex which will reduce the dimension by one.
	We may assume $q_1 \leq \cdots \leq q_n$ and distinguish several cases. \medskip

	\textbf{Case $q_n > -k$:} we may degenerate as in Figure~\ref{subfig:rational_tail_zero}.
	Hereby we introduce a new level into the graph, such that the vertex containing $q_n$ is the only vertex in its level.
	If $k$ divides $q_n + 1$, then the vertex containing $q_n$ will be inconvenient by Proposition~\ref{prop:residue_map_origin_missing}~(iv). In this case any two preimages of the newly formed edge form an effective cycle for the pole and allow to choose a non-zero $k$-residue, thus redeeming the inconvenient vertex.
	Independently of this special case, the vertex on top level now has the orders $1, \dots, 1, q_1, \dots, q_{n-1}, q'_n$ with $q'_n > q_n$ at the marked points, and we can repeat the process until all vertices are of dimension zero.
	
	At last, we remark that any inconvenient vertex in the rest of the graph that was redeemed by an effective cycle or a pair of independent cycles before degenerating the vertex is still redeemed by essentially the same cycles afterwards: If the cycle was using the half-edge $q_1$, it needs to be replaced with the obvious cycle that contains the newly formed edge.
	The same will be true in all other cases we discuss below, and we will not repeat this argument for each case.
	So from now on we can assume $q_n \leq -k$.
	\medskip

	\textbf{Case $q_1 < -k$:} we distinguish three cases. In each case, we give a degeneration that essentially replaces $q_1$ by a pole of lower order, i.e.\ by a $q'_1 > q_1$.
	\begin{itemize}
		\item If $q_1 < -k-1$ and $k \nmid q_1$, we can always degenerate as in Figure~\ref{subfig:rational_tail1}.
			Hereby we introduce a new level into the graph, such that the vertex containing $q_1$ is the only vertex on its level.
			Note that this vertex is never illegal, and the $k$-residue at the pole at $q_1$ is always zero, cf.~the discussion preceeding Lemma~\ref{lem:induced_k_residues}. In particular, this vertex is never inconvenient.
			Every cycle that was used to redeem any inconvenient vertex in the graph still exists and continues to be ``at or above level'' for the respective vertex.
		\item If $q_1 < -k-1$ and $k \mid q_1$, we need to distinguish two sub-cases.
			\begin{itemize}
				\item If there is an effective cycle $\gamma$ ``at or above level'' for one of the preimages of $q_1$ that only uses other preimages of $q_1$ to come back to its lowest level, we claim that we can always degenerate as in the previous case, i.e.\ as in Figure~\ref{subfig:rational_tail1}.
					Note that the new vertex will be inconvenient by Proposition~\ref{prop:residue_map_origin_missing}~(iv), but the cycle $\gamma$ ensures that we can choose a non-zero $k$-residue for the pole at $q_1$.
					The vertex containing the remaining $q_2, \dots, q_n$ may be inconvenient, too. Note that the pole at the newly created edge is not divisible by $k$. By checking the list of all inconvenient vertices, we see that there is only one possible way for a vertex with a pole that is not divisible by $k$ to be inconvenient for $k\geq2$, namely Proposition~\ref{prop:residue_map_origin_missing}~(iii).
					This is only possible if $q_2, \dots, q_n$ are divisible by and strictly less than $k$.
					If for any of those an effective cycles exists, then we can redeem the inconvenient vertex.
					If no such cycles exist, we instead degenerate as in the next case for with $q_2$ as the new $q_1$.
				\item If there is no such cycle $\gamma$, then we can degenerate as in Figure~\ref{subfig:rational_tail1b}. Note that if $k=2$ and $q_1 = -4$, the image is not accurate: in this case the newly created edge will be horizontal. Moreover, in this case the new vertex is inconvenient by Figure~\ref{fig:residue_map_line_missing}, line~6, but can be redeemed by the cycles given by the preimages of the newly formed horizontal edge. In all other cases, the vertex containing $q_1$ will not be inconvenient. A priori this vertex appears to have positive dimension. However, the non-existence of the cycle $\gamma$ forces the $k$-residue at $q_1$ to be zero. This is an extra condition on the $k$-residues, and hence the stratum corresponding to the new vertex is in fact zero dimensional.
			\end{itemize}
		\item If $q_1 = -k-1$, then we may degenerate as depicted in Figure~\ref{subfig:rational_tail2}. The preimages of the new horizontal edge provide all the necessary cycles.
	\end{itemize}
	\medskip 

	\textbf{Case $q_1 = \cdots = q_n = -k$.}
	Observe that this case only needs to be considered for $n \geq 3$ for dimensional reasons.
	If $n \geq 4$, we can degenerate the vertex into two vertices connected by a new horizontal edge, where one of the new vertices contains $q_1, q_2$ and $k$ of the legs of order $1$, and the other vertex contains the remaining marked points.	
	Let us now discuss the case $n = 3$.
	
	Let $v$ be the vertex containing the marked points $q_1,q_2,q_3$ (and a bunch of marked points of order 1), and let $\pi^\top : \hat G^\top \to G^\top$ be the sub-cover of $\pi'$ consisting of all vertices (and all edges between those vertices) at or above the level of $v$.
	Moreover, let $\pi^\top_\circ : \hat G^\top_\circ \to G^\top_\circ$ be the sub-cover of $\pi^\top$ obtained by removing $v$, all preimages of $v$ and all incident edges.
	We claim that the number of connected components of $G^\top_\circ$ is strictly larger than the number of connected components of $G^\top$.
	Without loss of generality that $G^\top$ is connected.
	For the sake of the argument, let us ``undo'' the contraction of the rational tail, i.e. we split off all the marked points of order $1$ adjacent to $v$ in a rational tail.
	We denote the vertex incident to the marked points $q_i$ by $v'$.
	The vertex $v'$ has four marked points and must be zero dimensional.
	If $G^\top_\circ$ were connected, then there would be a cycle in $\hat G^\top$ from each preimage $q_i$ to each preimage of $q_j$ for all $i, j$.
	Observe that those cycles would necessarily be effective for the $q_i$, as all relevant edges are horizontal.
	This implies that we can choose the ($k$-)residues at each $q_i$ independently.
	But this would imply that the vertex $v'$ is in fact not zero dimensional, but one dimensional (recall that it has four marked points), a contradiction.
	Hence $G^\top_\circ$ must be disconnected.%
	\footnote{
		We remark that the converse statement is false in general: If $G^\top_\circ$ is disconnected, the vertex might still be one dimensional.
	}
	By applying the same argument again we may assume that in $G^\top$ there is no simple cycle at or above the level of $v$ that uses a preimage of $q_1$ and of either $q_2$ or $q_3$.
	Let $G^\top_1$ be the collection of connected components of $G^\top_\circ$ which are adjacent to $q_1$ in $G^\top$.
	We may now degenerate our vertex $v$ as in Figure~\ref{subfig:rational_tail1}. While doing so, we move the subgraph $G^\top_1$ ``on level up'', such that $q_1$ remains at its lowest level (i.e.\ all edges $q_i$ remain to be horizontal). 
	
	This completes the case distinction. \medskip

	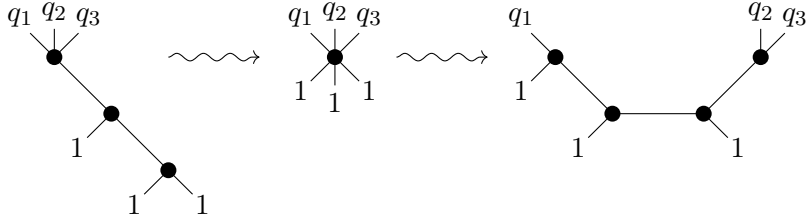
\begin{figure}[htb]
		\centering
		\begin{minipage}{0.9\textwidth}
			\begin{center}
				\hfill
\begin{subfigure}{.4\textwidth}
	\centering
	\begin{tikzpicture}[scale=1.5, decoration={markings, mark=at position .5 with {\arrow{>}}}]
		\fill
		(0, 0) circle (2pt)
		(-1, -0.5) circle (2pt);

		\path[draw] (0,0) -- (-1, -0.5);

		%legs
		\path[draw] (0,0) --++ (135:0.3);
		\draw (-0.3, 0.35) node {$q_1$};
		\path[draw] (0,0) --++ (45:0.3);
		\draw (0, 0.35) node {$\cdots$};
		\draw (0.43, 0.35) node {$q_{n-1}$};
		\path[draw] (0,0) --++ (-135:0.3);
		\draw (-0.3, -0.3) node {1};
		\draw (0, -0.3) node {$\cdots$};
		\path[draw] (0,0) --++ (-45:0.3);
		\draw (0.3, -0.3) node {1};

		\path[draw] (-1,-0.5) --++ (135:0.3);
		\draw (-1.3, -0.15) node {$q_n$};
		\path[draw] (-1,-0.5) --++ (-135:0.3);
		\draw (-1.3, -0.8) node {$1$};
	\end{tikzpicture}
	\caption{The case $q_n > -k$.}
	\label{subfig:rational_tail_zero}
\end{subfigure}
\hfill
\begin{subfigure}{.4\textwidth}
	\centering
	\begin{tikzpicture}[scale=1.5, decoration={markings, mark=at position .5 with {\arrow{>}}}]
		\fill
		(0, 0) circle (2pt)
		(-1, 0.5) circle (2pt);

		\path[draw] (0,0) -- (-1, 0.5);

		%legs
		\path[draw] (0,0) --++ (135:0.3);
		\draw (-0.3, 0.35) node {$q_2$};
		\path[draw] (0,0) --++ (45:0.3);
		\draw (0, 0.35) node {$\cdots$};
		\draw (0.3, 0.35) node {$q_n$};
		\path[draw] (0,0) --++ (-135:0.3);
		\draw (-0.3, -0.3) node {1};
		\draw (0, -0.3) node {$\cdots$};
		\path[draw] (0,0) --++ (-45:0.3);
		\draw (0.3, -0.3) node {1};

		\path[draw] (-1,0.5) --++ (135:0.3);
		\draw (-1.3, 0.85) node {$q_1$};
		\path[draw] (-1,0.5) --++ (-135:0.3);
		\draw (-1.3, 0.2) node {$1$};
	\end{tikzpicture}
	\caption{The case $q_1 < -k -1$ (with non-zero $k$-residue if $k \mid q_1$).}
	\label{subfig:rational_tail1}
\end{subfigure}
\hfill
\\
\hfill
\begin{subfigure}{.4\textwidth}
	\centering
	\begin{tikzpicture}[scale=1.5, decoration={markings, mark=at position .5 with {\arrow{>}}}]
		\fill
		(0, 0) circle (2pt)
		(-1, 0.5) circle (2pt);

		\path[draw] (0,0) -- (-1, 0.5);

		%legs
		\path[draw] (0,0) --++ (135:0.3);
		\draw (-0.3, 0.35) node {$q_2$};
		\path[draw] (0,0) --++ (45:0.3);
		\draw (0, 0.35) node {$\cdots$};
		\draw (0.3, 0.35) node {$q_n$};
		\path[draw] (0,0) --++ (-135:0.3);
		\draw (-0.3, -0.3) node {1};
		\draw (0, -0.3) node {$\cdots$};
		\path[draw] (0,0) --++ (-45:0.3);
		\draw (0.3, -0.3) node {1};

		\path[draw] (-1,0.5) --++ (135:0.3);
		\draw (-1.3, 0.85) node {$q_1$};
		\path[draw] (-1,0.5) --++ (-135:0.3);
		\draw (-1.3, 0.2) node {$1$};
		\path[draw] (-1,0.5) --++ (-45:0.3);
		\draw (-0.7, 0.2) node {1};
	\end{tikzpicture}
	\caption{The case $q_1 < -k -1$ with $k \mid q_1$ and zero $k$-residue.}
	\label{subfig:rational_tail1b}
\end{subfigure}
\hfill
\begin{subfigure}{.4\textwidth}
	\centering
	\begin{tikzpicture}[scale=1.5, decoration={markings, mark=at position .5 with {\arrow{>}}}]
		\fill
		(0, 0) circle (2pt)
		(-1, 0) circle (2pt);

		\path[draw] (0,0) -- (-1, 0);

		%legs
		\path[draw] (0,0) --++ (135:0.3);
		\draw (-0.3, 0.35) node {$q_2$};
		\path[draw] (0,0) --++ (45:0.3);
		\draw (0, 0.35) node {$\cdots$};
		\draw (0.3, 0.35) node {$q_n$};
		\path[draw] (0,0) --++ (-135:0.3);
		\draw (-0.3, -0.3) node {1};
		\draw (0, -0.3) node {$\cdots$};
		\path[draw] (0,0) --++ (-45:0.3);
		\draw (0.3, -0.3) node {1};

		\path[draw] (-1,0) --++ (135:0.3);
		\draw (-1.3, 0.35) node {$q_1$};
		\path[draw] (-1,0) --++ (-135:0.3);
		\draw (-1.3, -0.3) node {$1$};
	\end{tikzpicture}
	\caption{The case $q_1 = -k -1$.}
	\label{subfig:rational_tail2}
\end{subfigure}
\hfill
			\end{center}
		\end{minipage}
		\caption{Rearranging a rational tail}
		\label{fig:rational_tail_expansion}
	\end{figure}

	We will prove the existence of the degeneration $\pi'$ by induction on $c(\pi)$. Assume that $c(\pi)$ is not maximal, i.e.\ that $c(\pi) < (2+2k)(g-1)-1$. Let us prove that $\overline D_\pi$ intersects a boundary stratum of higher codimension. To this end, let for a level $L$ of $\Gamma^+$
	\[
		\sigma_{L} : D_\pi \longrightarrow \left(\prod_{v \in L} \Omega^k \cM_{g(v)}(\mu(v)) \right) \Big/ \bC^\times
	\]
	be the map that cuts out level $L$.
	(Note that projectivization of strata of multi-scale differentials is done with respect to the diagonal $\bC^\times$-action, and not with respect to the $\bC^\times$ action on each irreducible component.)
	Since $c(\pi)$ was assumed to not be maximal,
	there exists a level $L$ such that $\dim \sigma_{L}(D_\pi) \geq 1$.
	We need to argue that such a level $L$ can always be degenerated.

	If $\sigma_L(D_\pi)$ consists of multiple connected components that may be rescaled independently, then we obtain an additional level by taking the limit of this rescaling, i.e.\ by distributing the vertices of level $L$ to two levels according to the rescaling.
	If $\sigma_L(D_\pi)$ does not consist of multiple connected components (or those cannot be rescaled independently), then there must be an irreducible component of positive (projective) dimension in $\sigma_L(D_\pi)$.
	For ease of notation we assume that $\sigma_L(D_\pi)$ consists of only one such component, i.e.
	\begin{equation} \label{eq:levelsubspace}
		\sigma_L(D_\pi) \subseteq \bP \Omega^k \cM_{g(v')}(\mu(v')).
	\end{equation}
        The map $\sigma_L$ can be extended to the closure $\overline D_\pi$ using an appropriate moduli space of multi-scale $k$-differentials as codomain for the extended map.
        If $\sigma_L(\overline D_\pi)$ contains a point from the boundary of $\bP \Omega^k \cM_{g(v')}(\mu(v'))$, then this point gives rise to a degeneration of the level $L$.
        Such a point always exists: The image $\sigma_L(\overline D_\pi)$ is a complete variety and the stratum $\bP \Omega^k \cM_{g(v')}(\mu(v'))$ on the right hand side of \eqref{eq:levelsubspace} does not contain any complete variety by \cite[Th\'{e}or\`{e}me~1 and Corollaire~2]{Gen20}. This concludes the proof.
\end{proof}

\begin{myrem}
	The problem of rearranging tails of rational curves appeared for $k=1$ in the proof of \cite[Proposition~6.9]{MUW17_published}, but the proof in loc.\ cit. is incomplete at this point.
	For the sake of completeness, we explain how to rearrange tails of rational curves for $k=1$ using a similar case distinction as in the above proof.
	Assume that we arrive in the situation on the right hand side of Figure~\ref{fig:rational_tail} after contracting a rational tail.
	As in the above proof, we assume that $q_1 \leq \cdots \leq q_n$.
	If $q_n > -1$, we can degenerate is in Figure~\ref{subfig:rational_tail_zero}.
	So assume that $q_n \leq -1$. We distinguish three cases.
	\begin{itemize}
		\item If $q_1 < -2$ we can always degenerate is in Figure~\ref{subfig:rational_tail1}.
		\item If $q_1 = -2$ we have to consider two subcases.
			\begin{itemize}
				\item If there is a cycle ``at or above level'' through $q_1$, then we may degenerate as in Figure~\ref{subfig:rational_tail2}. Here we remind the reader that for $k=1$ all cycles are effective.
				\item If there is no cycle ``at or above level'' through $q_1$, then we may degenerate as in Figure~\ref{subfig:rational_tail1b}. Observe that none of the vertices is inconvenient.
			\end{itemize}
		\item If $q_1 = -1$ (i.e. $q_1 = \cdots = q_n = -1$) and $n \geq 4$, we can degenerate as described in the above proof for the case $q_1 = \cdots = q_n = -k$ and $n \geq 4$.
			The cases $n=2,3$ are impossible for dimensional reasons.
	\end{itemize}
\end{myrem}

\begin{proof}[Proof of Theorem~\ref{thm:dimension_realizability_locus}]
	The dimension of the realizability locus is bounded from above by the dimension of the domain of the tropicalization map, i.e.\ by $\dim \PMSD[k]{g}{n}{1, \dots, 1} = (2+2k)(g-1)-1$.
	That this bound is actually obtained and all maximal cones are of the expected dimension follows from the previous proposition.
\end{proof}

\begin{myrem}
	We emphasize that for $k=1$ the above formula does not give the correct dimension for the maximal cones. This is due to the formula for the dimension of the principal stratum being different in the abelian case.
\end{myrem}

\subsection{Obstructions to realizability}

The following are two simple criteria which can be used to recognize non-realizable tropical normalized covers. An application is illustrated in Section \ref{sec:example} below.

\begin{mycor} \label{cor:contracting_horizontal_edges}
	Let $\pi : \hat \Gamma^+ \to \Gamma^+$ be a tropical normalized cover with enhancements associated by Lemma~\ref{lem:def_trop_normalized_cover_is_redundand}. Let $e$ be a horizontal edge in $\Gamma^+$ and denote by $\pi / \{e\}$ the tropical normalized cover obtain from $\pi$ by contracting $e$ in the base and every $\hat e \in \pi^{-1}(e)$. If $\pi$ is realizable, then so is $\pi / \{e\}$. 
\end{mycor}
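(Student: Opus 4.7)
The plan is to combine two ingredients: the purely combinatorial nature of the realizability criterion of Theorem~\ref{thm:main_theorem}, and the closedness of $\trop_{\Xi^k}$ established in Lemma~\ref{lem:trop_N}. Conditions (i)--(iii) in Theorem~\ref{thm:main_theorem} depend only on the combinatorial type of the tropical normalized cover, i.e.\ on the underlying normalized cover of enhanced level graphs together with the dilation factors on the edges, and not on the actual edge lengths of $\Gamma$ and $\hat\Gamma$. Therefore, if $\pi$ is realizable, the entire open cone $\sigma_\pi^\circ \subseteq \tropMSD$ that parametrizes tropical normalized covers of the same combinatorial type consists of realizable covers. Since $\trop_{\Xi^k}$ is closed, its image, the realizability locus, is closed in $\tropMSD$, and hence contains the closure $\overline{\sigma_\pi}$.

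In the generalized cone complex structure of $\tropMSD$ described in the proof of Theorem~\ref{intro:thm:tropMSD}, cones are glued along face maps induced by edge contractions. Concretely, the face of $\sigma_\pi$ obtained by letting the length of $e$ tend to zero is identified with $\sigma_{\pi/\{e\}}$. Here I use that the lengths of all preimages $\hat e \in \pi^{-1}(e)$ are forced to zero simultaneously via the identity $l(\hat e) = l(e)/d_{\hat e}$ coming from the definition of a morphism of tropical curves. Since $\sigma_{\pi/\{e\}} \subseteq \overline{\sigma_\pi}$ and the realizability locus is closed, this immediately gives the conclusion.

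The only thing that remains to be checked is that $\pi/\{e\}$ really is a well-defined tropical normalized cover (this is implicit in the statement of the corollary). Since $e$ is horizontal, its endpoints sit on the same level of $\Gamma^+$, so contracting $e$ leaves the level function intact and the enhancements on the surviving half-edges are unchanged. Upstairs, the preimages $\hat e \in \pi^{-1}(e)$ are horizontal by compatibility of $\pi$ with the level structure, and the deck transformation $\tau$ permutes $\pi^{-1}(e)$ transitively, so contracting all of them at once is $\tau$-equivariant. Harmonicity and the local Riemann-Hurwitz condition at the merged vertex follow directly from the corresponding conditions at the two endpoints of $e$ (with a routine adjustment for genus if $e$ was a self-loop). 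The rational function $f$ is constant on $e$ (as $e$ is horizontal), so it descends to a rational function on $\Gamma/\{e\}$ cutting out the same pluri-canonical divisor. The main obstacle is therefore purely bookkeeping: once one trusts that the contraction operation is well-posed, the realizability statement falls out of the "combinatorial criterion + closed tropicalization" package without any further analysis of residues or cycles.
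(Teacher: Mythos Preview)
Your argument is correct and takes a genuinely different route from the paper. The paper's proof is a one-liner invoking the geometry of the moduli space directly: given a realization $\hat X\to X$ of $\pi$, one smooths the horizontal nodes lying over $e$ using the plumbing construction of \cite[Section~3.1]{CMZ19}; this yields a family whose tropicalization is $\pi/\{e\}$. No appeal to Theorem~\ref{thm:main_theorem} or to closedness of $\trop_{\Xi^k}$ is made.

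Your approach instead packages the result as a formal consequence of Theorem~\ref{thm:main_theorem} and Lemma~\ref{lem:trop_N}: since the realizability criterion is purely combinatorial, the whole open cone $\sigma_\pi^\circ$ is realizable, and closedness of the realizability locus pushes this to the face obtained by sending $l(e)\to 0$. This is a clean argument, and the observation that $l(e)$ is genuinely a free parameter in $\sigma_\pi$ (because $f$ has slope $0$ on a horizontal edge, so no continuity constraint involves $l(e)$) is exactly what is needed to see that $\pi/\{e\}$ sits in $\overline{\sigma_\pi^\circ}$. The verification that $\pi/\{e\}$ is again a tropical normalized cover is routine, as you note.

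What each approach buys: the paper's proof is independent of Theorem~\ref{thm:main_theorem} and transparently constructs the realization, which is conceptually cleaner for a corollary labeled as an ``obstruction''---one sees directly why the obstruction propagates under contraction. It also extends verbatim to Corollary~\ref{cor:smoothing_level}. Your argument, on the other hand, avoids re-invoking the plumbing machinery of \cite{CMZ19} and shows that once Theorem~\ref{thm:main_theorem} is in hand, statements of this kind fall out of the cone-complex structure with no further geometric input.
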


\begin{proof}
	Let $\hat X \to X$ be a realization of $\pi$.
	Smoothing of a horizontal edge of $\hat X \to X$ is always possible, see \cite[Section~3.1]{CMZ19}, and produces a realization of $\pi / \{e\}$.
\end{proof}

With the same proof, we get

\begin{mycor} \label{cor:smoothing_level}
	In the situation of Corollary \ref{cor:contracting_horizontal_edges} let $E'$ be the set of all edges in $\Gamma^+$ connecting two neighboring levels. If $\pi$ is realizable, then so is $\pi / E'$.
\end{mycor}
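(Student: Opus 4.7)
The plan is to mimic the proof of Corollary~\ref{cor:contracting_horizontal_edges} by replacing the smoothing of a horizontal edge with the smoothing of an entire pair of adjacent levels. More precisely, let $\hat X \to X$ be a realization of $\pi$, viewed as a multi-scale $k$-differential $(\pi : \hat X \to X, \bs, \omega, \pi : \hat G^+ \to G^+) \in \PMSD[k]{g}{n}{1, \dots, 1}$. The set $E'$ consists of exactly those edges of $\Gamma^+$ that connect two fixed neighboring levels $L$ and $L'$ (say $L > L'$); equivalently, these are the edges whose associated nodes in $X$ separate the subcurve at or above level $L$ from the subcurve at or below level $L'$.

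Next, I would invoke the smoothing procedure of \cite[Section~3.1]{CMZ19}. There, in addition to smoothing horizontal edges, one shows that one can simultaneously smooth \emph{all} vertical edges between two neighboring levels $L$ and $L'$: the resulting family stays inside $\PMSD[k]{g}{n}{1, \dots, 1}$, and on the combinatorial side this operation merges the two levels into one and contracts exactly the edges of $E'$ (and their preimages in $\hat G^+$ under the $\tau$-invariance of the construction). Applying this smoothing to our realization $(\hat X \to X, \bs, \omega)$ produces a family of multi-scale $k$-differentials over $\Delta$ whose generic fiber is smooth, whose enhanced level graph is precisely $\Gamma^+$ with the two levels $L$ and $L'$ identified, and whose cover structure is precisely $\pi/E'$.

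Tropicalizing this new family via $\trop_{\Xi^k}$ (Lemma~\ref{lem:trop_N}) yields a tropical normalized cover whose underlying combinatorial type is $\pi/E'$, by the same argument used in the proof of Corollary~\ref{cor:contracting_horizontal_edges}. Hence $\pi/E'$ lies in the image of $\trop_{\Xi^k}$ and is therefore realizable. The only subtle point is that one must apply the smoothing procedure $\tau$-equivariantly so that the deck transformation persists on the smoothed family; this is automatic because $E'$ and its preimage $\pi^{-1}(E')$ are both $\tau$-invariant (the level function is $\tau$-invariant by construction of a normalized cover of enhanced level graphs), so the $\tau$-action extends to the smoothed total space by Theorem~\ref{thm:BCGGM_main_theorem}(i). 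No new work beyond quoting \cite[Section~3.1]{CMZ19} is required.
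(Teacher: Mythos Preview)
Your proposal is correct and matches the paper's approach exactly: the paper simply states ``With the same proof, we get'' Corollary~\ref{cor:smoothing_level}, meaning one invokes the smoothing procedure of \cite[Section~3.1]{CMZ19} for an entire level passage rather than for a single horizontal edge. Your added remarks on $\tau$-equivariance and on tropicalizing the resulting family are more explicit than the paper's one-line proof but do not depart from it.
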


\begin{myexa} \label{exa:contracting_level}
	\begin{figure}[htb]
		\centering
		\begin{tabular}{c c}
			\begin{minipage}{0.5\textwidth}
				\centering
				\begin{tikzpicture}[scale=1.5, decoration={markings, mark=at position .5 with {\arrow{>}}}]
	%%% cover %%%
	\fill 
	(-1, 0) circle (2pt)
	(0.5, 0) circle (2pt)
	(1.5, 0) circle (2pt);
	\draw
	(-0.5, 1) node[draw, thick, shape = circle, inner sep = 1pt, anchor = south] {1}
	(-1.5, 1) node[draw, thick, shape = circle, inner sep = 1pt, anchor = south] {1}
	(0.5, 1) node[draw, thick, shape = circle, inner sep = 1pt, anchor = south] {1}
	(1.5, 1) node[draw, thick, shape = circle, inner sep = 1pt, anchor = south] {1};
	
	\path[draw] (-1,0) to (-1.5, 1);
	\path[draw] (-1,0) to (-0.5, 1);
	\path[draw] (-1,0) to (0.5, 1);
	\path[draw] (-1,0) to (1.5, 1);
	\path[draw] (0.5,0) to (0.5, 1);
	\path[draw] (0.5,0) to (1.5, 1);
	\path[draw] (1.5,0) to (0.5, 1);
	\path[draw] (1.5,0) to (1.5, 1);
	
	%legs
	\path[draw] (-1,0) --++ (-45:0.3);
	\draw (-0.7, -.3) node {4};
	\path[draw] (-1,0) --++ (-135:0.3);
	\draw (-1.3, -.3) node {2};
	\path[draw] (0.5,0) --++ (0,-0.25);
	\draw (0.5, -.4) node {2};
	\path[draw] (1.5,0) --++ (0,-0.25);
	\draw (1.5, -.4) node {2};
	
	\draw (-2.5, .5) node {$\hat G_1^+$};

	%%% base %%%
	%genus 1 vertices
	\draw (-1, -1) node[draw, thick, shape = circle, inner sep = 1pt, anchor = south] {1};
	\draw (1, -1) node[draw, thick, shape = circle, inner sep = 1pt, anchor = south] {1};
	
	%parallel edges
	\path[draw] (-1, -2) to (-1, -1);
	\path[draw] (-1, -2) to (1, -1);
	\path[draw] (1, -2) to[bend left = 30] (1, -1) to[bend left = 30] (1, -2);
	
	%genus 0 vertices
	\fill (-1, -2) circle (2pt);
	\fill (1, -2) circle (2pt);
	
	%legs
	\path[draw] (-1,-2) --++ (-45:0.3);
	\draw (-0.7, -2.3) node {3};
	\path[draw] (-1,-2) --++ (-135:0.3);
	\draw (-1.3, -2.3) node {1};
	\path[draw] (1,-2) --++ (0,-0.25);
	\draw (1, -2.4) node {4};
	
	%name of graph
	\draw (-2.5, -1.5) node {$G_1^+$};
	
\end{tikzpicture}
			\end{minipage}
			&
			\begin{minipage}{0.5\textwidth}
				\centering
				\begin{tikzpicture}[scale=1.5, decoration={markings, mark=at position .5 with {\arrow{>}}}]
	%%% cover %%%
	\fill 
	(-1, 0) circle (2pt);
	\draw
	(-0.5, 1) node[draw, thick, shape = circle, inner sep = 1pt, anchor = south] {1}
	(-1.5, 1) node[draw, thick, shape = circle, inner sep = 1pt, anchor = south] {1}
	(1, 1) node[draw, thick, shape = circle, inner sep = 1pt, anchor = south] {3};
	
	\path[draw] (-1,0) to (-1.5, 1);
	\path[draw] (-1,0) to (-0.5, 1);
	\path[draw] (-1,0) to (1, 1);
	\path[draw] (-1,0) to (1, 1);
	
	%legs
	\path[draw] (-1,0) --++ (-45:0.3);
	\draw (-0.7, -.3) node {4};
	\path[draw] (-1,0) --++ (-135:0.3);
	\draw (-1.3, -.3) node {2};
	\path[draw] (1,1) --++ (-45:0.3);
	\draw (0.7, 0.7) node {2};
	\path[draw] (1,1) --++ (-135:0.3);
	\draw (1.3, 0.7) node {2};
	
	\draw (-2.5, .5) node {$\hat G_2^+$};

	%%% base %%%
	%genus 1 vertices
	\draw (-1, -1) node[draw, thick, shape = circle, inner sep = 1pt, anchor = south] {1};
	\draw (1, -1) node[draw, thick, shape = circle, inner sep = 1pt, anchor = south] {2};
	
	%parallel edges
	\path[draw] (-1, -2) to (-1, -1);
	\path[draw] (-1, -2) to (1, -1);
	
	%genus 0 vertices
	\fill (-1, -2) circle (2pt);
	
	%legs
	\path[draw] (-1,-2) --++ (-45:0.3);
	\draw (-0.7, -2.3) node {3};
	\path[draw] (-1,-2) --++ (-135:0.3);
	\draw (-1.3, -2.3) node {1};
	\path[draw] (1,-1) --++ (0,-0.25);
	\draw (1, -1.4) node {4};
	
	%name of graph
	\draw (-2.5, -1.5) node {$G_2^+$};
	
\end{tikzpicture}
			\end{minipage}
		\end{tabular}
		\caption{A realizable graph and a non-realizable undegeneration}
		\label{fig:smoothing_level}
	\end{figure}
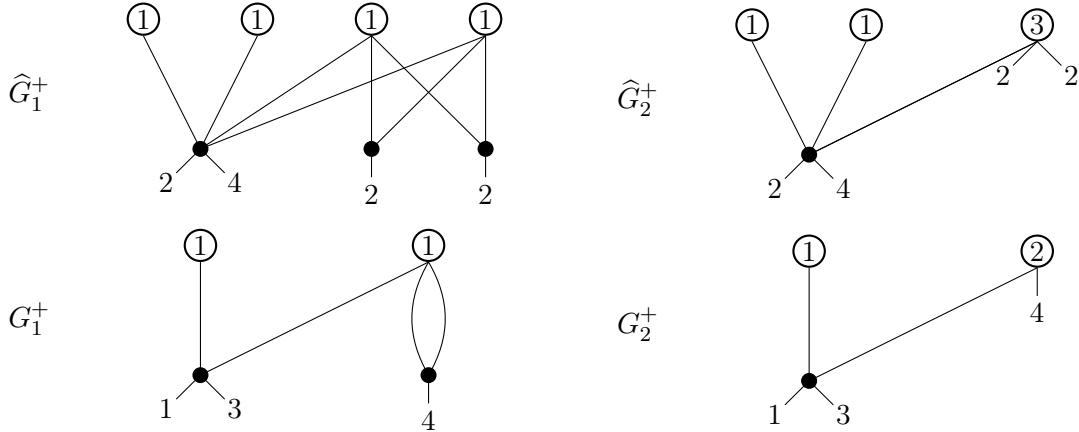

	We emphasize that in the situation of Corollary~\ref{cor:smoothing_level} only complete level passages may be smoothed.
	We give an example for this in terms of enhanced level graphs that can easily be imagined as the top most levels of enhanced level graphs associated to a tropical normalized cover by Lemma~\ref{lem:def_trop_normalized_cover_is_redundand}.
	Consider the cover of graphs $\hat G_1^+ \to G_1^+$ on the left of Figure~\ref{fig:smoothing_level}. We assume that the $o$-value at the top end of all edges is $0$. Both vertices on bottom level of $G_1^+$ are inconvenient. In $\hat G_1^+$, there is precisely one effective cycle (up to the $\tau$-action) and this cycle is admissible for both inconvenient vertices.
	Therefore, this cover is realizable by Theorem~\ref{thm:main_theorem}.
	After smoothing only some of the edges between top and bottom level, we obtain the cover $\hat G_2^+ \to G_2^+$ on the right.
	This is no longer realizable: There is no effective cycle (and hence no admissible cycle or independent pair of cycles) for the inconvenient vertex on bottom level. And besides that, the vertex of genus $2$ is illegal.

	This example also highlights another aspect. As we have seen in Example~\ref{ex:pair_of_cycles} there are inconvenient vertices of type~II for which there is no admissible cycle, but a pair of independent cycles. The inconvenient vertex of genus zero on the left of $G_1^+$ is inconvenient of type~II, but in this case there exists only an admissible cycle and no independent pair of cycles. Thus in fact both situations may occur.
\end{myexa}

	% !TEX root = realizability.tex
\section{Examples}
\label{sec:example}

\subsection{$kK_\Gamma$ is always realizable}
\label{subsec:kK_Gamma_realizable}

Consider a stable tropical curve $\Gamma$ with divisor $kK_\Gamma$ for some $k \geq 2$. We show that the pair $(\Gamma, kK_\Gamma)$ is realizable\footnote{The authors are very grateful to D. Maclagan who raised this question during a discussion at the 2021 edition of the conference ``Effective Methods in Algebraic Geometry''.}.
Note that stability implies that every vertex is in the support of the divisor. Hence the construction from Lemma \ref{lem:construction_of_k-enhancement} produces for every vertex at least one incident leg with enhancement value 1. Thus when constructing a tropical normalized cover $\hat \Gamma \to \Gamma$, every vertex of $\Gamma$ has necessarily only a single preimage. Furthermore, all edges in $\Gamma^+$ are horizontal. Hence the only possible choice for $\hat\Gamma$ is to replace each edge of $\Gamma$ with $k$ parallel edges. But now the conditions of Theorem \ref{thm:main_theorem} are easy to check: the necessary cycles are provided by the parallel edges.

\subsection{Dumbbell graph}
\label{subsec:example_dumbbell_graph}

Let $k = 2$ and consider the \emph{dumbbell graph} $\Gamma$ consisting of two vertices of genus~0, connected with a bridge edge and having a self-loop at each vertex, see Figure \ref{fig:dumbbel_graph}. We claim that Figure \ref{fig:realizability_dumbbell_graph} shows all maximal cones in the realizability locus over the dumbbell graph. Note that each of them is of dimension 5 as was expected by Theorem~\ref{thm:dimension_realizability_locus}.

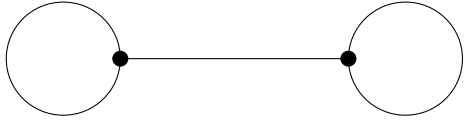
\begin{figure}[htb]
	\centering
	\begin{minipage}{0.9\textwidth}
		\centering
		\begin{tikzpicture}[scale=1.5]
			%draw curve
			\draw (-0.5, 0) circle [radius = 0.5];
			\path[draw] (0, 0) -- ++ (2, 0);
			\draw (2.5, 0) circle [radius = 0.5];			
			
			%draw divisor
			\fill
			(0,0) circle [radius = 2pt]
			(2,0) circle [radius = 2pt];
		\end{tikzpicture}
	\end{minipage}
	\caption{Dumbbell graph.}
	\label{fig:dumbbel_graph}
\end{figure}

To verify our claim let us start with the divisor $2K_\Gamma$. We focus on one of the trivalent vertices $v$: notice that the divisor produces two legs at $v$. How can we move these to arrive at a combinatorial type with more degrees of freedom? The first option is to move both legs onto the incident self-loop. Necessarily they will be symmetric on the loop. Performing this move on both sides of the graph produces the configuration from Figure \ref{subfig:Dumbbell_1}. This is realizable by the same argument as in the previous Section \ref{subsec:kK_Gamma_realizable}: we may simply cover every vertex by a unique preimage, producing cycles above every horizontal edge. In this case there are no inconvenient vertices.

The second option would be to move a single leg onto the bridge and leave the other one at $v$. Assume the resulting situation was realizable. Contract the self-loop at $v$ and obtain a vertex of genus 1 and type $(-1, 1)$. This is an illegal vertex and by Corollary \ref{cor:contracting_horizontal_edges} we obtain a contradiction. Hence, we cannot leave a single leg behind.

So the third an final option is to move both legs onto the bridge. This will leave a part of the graph that is depicted in Figure \ref{fig:dumbbel_graph_split_component} behind. Note that any cover of this part of the graph must be disconnected. This can been seen directly with Corollary \ref{cor:contracting_horizontal_edges} again. Alternatively, observe that the vertex has to have two preimages (otherwise it would be illegal). Connecting these two copies with two parallel horizontal edges fails to provide an effective cycle above the (horizontal) self-loop on the base (see Example~\ref{exa:non-effective_cycle}). Hence, each of the two instances of the vertex must have a self-loop attached.

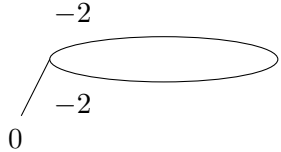
\begin{figure}[htb]
	\centering
	\begin{minipage}{0.9\textwidth}
		\centering
		\begin{tikzpicture}[scale=1.5]
			\draw (1, 0) ellipse [x radius = 1, y radius = 0.2];
			\path[draw] (0, 0) -- ++ (-0.25, -0.5);		
			
			\draw 
			(0.2, 0.4) node {$-2$}
			(0.2, -0.4) node {$-2$}
			(-0.3, -0.7) node {0};
		\end{tikzpicture}
	\end{minipage}
	\caption{Part of an enhanced dumbbell graph. This is only realizable when provided with a disconnected cover.}
	\label{fig:dumbbel_graph_split_component}
\end{figure}

With this observation in mind, we may choose to keep the pair of legs that we just moved onto the bridge together or separate them. In the former case, we arrive at situations from Figure \ref{subfig:Dumbbell_2} or \ref{subfig:Dumbbell_3}. In each case there is a unique tropical normalized cover $\hat \Gamma^+$ and it can be checked to satisfy the conditions from Theorem \ref{thm:main_theorem}. Hence, both of these configurations are realizable. The other option does however produce an inconvenient vertex of type $(1,-1; -4)$. This vertex does not have any simple closed cycle above it, which violates the conditions of Theorem \ref{thm:main_theorem}.

Finally, we may push three or four of the legs onto the same self-loop. The first option does not produce a realizable configuration: Similar to the cases discussed above, the leg left behind always produces an inconvenient vertex that is not redeemed by an appropriate cycle. So let us consider the case that all four legs have been pushed to the same self-loop. This gives a realizable configuration if and only if the vertices are pairwise at the same level, as depicted in Figure \ref{subfig:Dumbbell_4}. We check this by providing a realizable cover $\hat \Gamma^+$ in Figure~\ref{fig:realizable_dumbbell_graph_with_cover}. In fact, taking into consideration what we discussed about Figure~\ref{fig:dumbbel_graph_split_component} this is the only possible normalized cover without illegal vertices. Notice that there are again two inconvenient vertices of type $(-1, 1; -4)$ on the base. But the cover does admit a simple closed cycle above them which can be checked to be effective. We emphasize that this cycle passes through both vertices of type $(-1,1;-4)$. Thus if those vertices were not on the same level, then there would not exist a simple cycle ``at or above level'' for the higher of the two. By Theorem~\ref{thm:main_theorem} this would contradict realizability.

\begin{figure}[htb]
	\centering
	\begin{minipage}{0.9\textwidth}
		\begin{center}
			\input{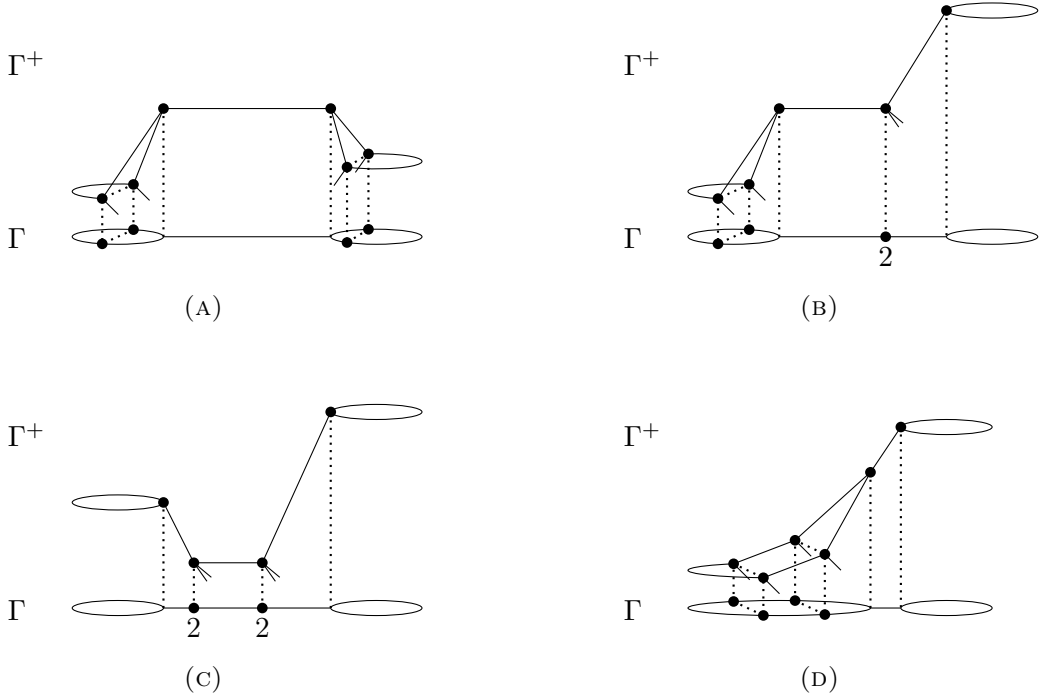}
		\end{center}
	\end{minipage}
	\caption{Realizability locus over the dumbbell graph. Vertices of $\Gamma^+$ connected by dashed lines lie on the same level.}
	\label{fig:realizability_dumbbell_graph}
\end{figure}

\begin{figure}[htb]
	\centering
	\begin{minipage}{0.9\textwidth}
		\begin{center}
			\begin{tikzpicture}[scale=2]
\tikzstyle{every node}=[font=\normalsize]

%specify size of dumbbell graph -- radii of self loop
\def \xrad{0.3}
\def \yrad{.05}

%position of selfloops and trivalent vertices
\coordinate (center1) at (0.6, -.5);
\coordinate (center2) at (1.7,-.5);
\coordinate (trivalent1) at ($(center1) + (.6, .0)$);
\coordinate (trivalent2) at ($(center2) - (\xrad, 0)$);

%points in divisor on dubbell graph
\coordinate (D1) at ($(center1) + (80:0.6 cm and \yrad cm)$);
\coordinate (D2) at ($(center1) + (120:0.6 cm and \yrad cm)$);
\coordinate (D3) at ($(center1) + (260:0.6 cm and \yrad cm)$);
\coordinate (D4) at ($(center1) + (300:0.6 cm and \yrad cm)$);

%specify parameters of base curve -- position of "loops"
\def \heightLoopLeft{0.25}
\def \heightLoopRight{1.2}
\coordinate (centerBase1) at ($(center1) + (0,\heightLoopLeft)$);
\coordinate (centerBase2) at ($(center2) + (0,\heightLoopRight)$);

%lift of divisor from dumbbell graph to base curve
\coordinate (D2base) at ($(D2) + (0,\heightLoopLeft)$);
\coordinate (D3base) at ($(D3) + (0,\heightLoopLeft)$);
\coordinate (D1base) at ($(D1) + (0,\heightLoopLeft + .15)$);
\coordinate (D4base) at ($(D4) + (0,\heightLoopLeft + .15)$);

%trivalent vertices
\def \heightBridgeEdge{0.30}
\coordinate (trivalent1base) at ($(trivalent1) + (0,\heightBridgeEdge + 0.6)$);
\coordinate (trivalent2base) at ($(trivalent2) + (0,\heightLoopRight)$);

% Platzhalter  
\draw (1.3,-.7) node[] {\phantom{}}; % unten mitte
\draw (2.3,1.3) node[] {\phantom{}}; % rechts oben

% draw base curve left to right			
%\draw[yellow] (.3,-.2) ellipse (.3cm and .05cm); % links oben
\draw (D2base) arc (120:260:0.6 cm and \yrad cm);			
%\path[draw,dotted,thick] (D1) -- (D1cover) -- (D4cover) -- (D4);
%\path[draw,dotted,thick] (D2) -- (D2cover) -- (D3cover) -- (D3);
\path[draw] (D2base) -- (D1base) -- (trivalent1base) -- (D4base) -- (D3base);
%\draw[dotted,thick] (trivalent1cover) -- (trivalent1);
\path[draw] (trivalent1base) -- (trivalent2base);
\draw (centerBase2) ellipse (\xrad cm and \yrad cm);
%\draw[dotted,thick] (trivalent2cover) -- (trivalent2);

%vertices on base
\fill
(D1base) circle (1pt)
(D2base) circle (1pt)
(D3base) circle (1pt)
(D4base) circle (1pt)
(trivalent1base) circle (1pt)
(trivalent2base) circle (1pt);

%legs on base
\path[draw] (D1base) [] -- ++(-45:.15);		%leg
\path[draw] (D2base) [] -- ++(-45:.15);		%leg
\path[draw] (D3base) [] -- ++(-45:.15);		%leg
\path[draw] (D4base) [] -- ++(-45:.15);		%leg

%specify parameters of covering curve -- position of "loops"
\def \heightLoopLeft{1.25}
\def \heightLoopRight{1.7}
\def \doubleVertexOffset{0.08}
\coordinate (centerCover1) at ($(center1) + (0,\heightLoopLeft)$);
\coordinate (centerCover2A) at ($(center2) + (0,\heightLoopRight)$);
\coordinate (centerCover2B) at ($(center2) + (0,\heightLoopRight + \doubleVertexOffset)$);

%lift of divisor from dumbbell graph to cover
\coordinate (D2cover) at ($(D2) + (0,\heightLoopLeft)$);
\coordinate (D3cover) at ($(D3) + (0,\heightLoopLeft)$);
\coordinate (D1cover) at ($(D1) + (0,\heightLoopLeft + .15)$);
\coordinate (D4cover) at ($(D4) + (0,\heightLoopLeft + .15)$);

%trivalent vertices
\def \heightBridgeEdge{1}
\coordinate (trivalent1coverA) at ($(trivalent1) + (0,\heightBridgeEdge + 0.6)$);
\coordinate (trivalent1coverB) at ($(trivalent1) + (0,\heightBridgeEdge + 0.6 + \doubleVertexOffset)$);
\coordinate (trivalent2coverA) at ($(trivalent2) + (0,\heightLoopRight)$);
\coordinate (trivalent2coverB) at ($(trivalent2) + (0,\heightLoopRight + \doubleVertexOffset)$);

% draw base curve left to right			
%\draw[yellow] (.3,-.2) ellipse (.3cm and .05cm); % links oben
\draw[double, double distance=1pt] (D2cover) arc (120:260:0.6 cm and \yrad cm);			
\path[draw,dotted,thick] (D1base) -- (D1cover) -- (D4cover) -- (D4base) -- (D1base);
\path[draw,dotted,thick] (D2base) -- (D2cover) -- (D3cover) -- (D3base) -- (D2base);
\path[draw] (D2cover) -- (D1cover);
\path[draw] (D4cover) -- (D3cover);
\path[draw] (D1cover) -- (trivalent1coverA) -- (D4cover);
\path[draw] (D1cover) -- (trivalent1coverB) -- (D4cover);
\draw[dotted,thick] (trivalent1coverB) -- (trivalent1base);
\path[draw] (trivalent1coverA) -- (trivalent2coverA);
\path[draw] (trivalent1coverB) -- (trivalent2coverB);
\draw (centerCover2A) ellipse (\xrad cm and \yrad cm);
\draw (centerCover2B) ellipse (\xrad cm and \yrad cm);
\draw[dotted,thick] (trivalent2coverB) -- (trivalent2base);

%vertices on base
\fill
(D1cover) circle (1pt)
(D2cover) circle (1pt)
(D3cover) circle (1pt)
(D4cover) circle (1pt)
(trivalent1coverA) circle (1pt)
(trivalent1coverB) circle (1pt)
(trivalent2coverA) circle (1pt)
(trivalent2coverB) circle (1pt);

%legs on base
\path[draw] (D1cover) [] -- ++(-45:.15);		%leg
\path[draw] (D2cover) [] -- ++(-45:.15);		%leg
\path[draw] (D3cover) [] -- ++(-45:.15);		%leg
\path[draw] (D4cover) [] -- ++(-45:.15);		%leg

\draw (-.3,1) node {$\hat \Gamma^+$};
\draw (-.3,0) node {$\Gamma^+$};
%        \draw (-.3,.65) node {$\Gamma^+$};
%        \draw (-.4,-.5) node {$\Gamma$};
\end{tikzpicture}
		\end{center}
	\end{minipage}
	\caption{A realizable cover for the configuration in Figure~\ref{subfig:Dumbbell_4}.}
	\label{fig:realizable_dumbbell_graph_with_cover}
\end{figure}
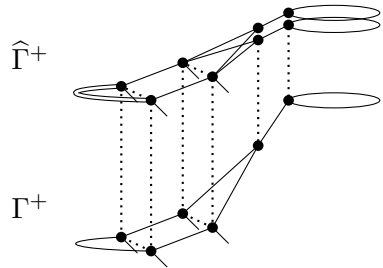

	\appendix
	% !TEX root = realizability.tex

\newcommand{\bg}{\mathbf{g}}
\newcommand{\bn}{\mathbf{n}}
\newcommand{\bd}{\mathbf{d}}
\newcommand{\bmu}{\boldsymbol{\mu}}
\newcommand\PMSDR[4][]{\bP \Xi^{#1}_\bd \overline \cM^{\frakR}_{{#2},{#3}}({#4})}

\section{Nonemptiness of boundary strata}

\label{app:generalized_strata}

While the boundary strata of the moduli space $\PMSD[k]{g}{n}{\mu}$ of multi-scale $k$-differentials of type $\mu$ are indexed by normalized covers of enhanced level graphs $\pi : \hat G^+ \to G^+$, not every such cover in fact corresponds to a nonempty boundary stratum $D_\pi$.
Theorem~\ref{thm:main_theorem} implicitly solves the problem to determine if a boundary stratum $D_\pi$ is in fact nonempty for the moduli space $\PMSD[k]{g}{n}{1, \dots, 1}$, that is: Given a cover of enhanced level graphs $\pi : \hat G^+ \to G^+$ where all legs of $G^+$ have $o$-value $1$, is there a corresponding normalized cover of twisted $k$-differentials $\hat X \to X$ in $\PMSD[k]{g}{n}{1, \dots, 1}$?
In fact our methods can be applied to all strata of $k$-differentials. We will make this explicit in the slightly more general setting of so-called \emph{generalized strata}.

To motivate the definition of generalized strata, fix a type $\mu$ and consider a boundary stratum $D_\pi$ of $\PMSD[k]{g}{n}{\mu}$ given by a cover of enhanced level graphs $\pi : \hat G^+ \to G^+$.
Now consider the space $B$ given by the family corresponding to the projection of $D_\pi$ to some level $L$ of $G^+$.
In general $B$ fails to be a honest stratum for two reasons: The level $G^+_{=L}$ may have several connected components, and the $k$-residues at the poles connecting $G^+_{=L}$ to higher levels may be restricted by the GRC.
In other words, $B$ is a subspace of a product of strata.
The definition of a generalized stratum models this kind of spaces.
Our definition is a generalization to the setting of $k$-differentials of the definition given in \cite{CMZ20_euler} for the abelian case.

Let $\Omega^k_d \cM_{g,n}(\mu)$ denote the connected components of $\Omega^k \cM_{g,n}(\mu)$ which parametrize $d$-th powers of primitive $k/d$-differentials.
Let
\[
	\Omega^k_\bd \cM_{\bg,\bn}(\bmu) = \prod_{i=1}^\kappa \Omega^k_{d_i} \cM_{g_i,n_i}(\mu_i)
\]
be a disconnected stratum and let 
\[
	\Omega \cM_{\hat \bg, \hat \bn}(\hat \bmu) = \prod_{j=1}^{\hat \kappa} \Omega \cM_{\hat g_j, \hat n_j}(\hat \mu_j)
\]
be the product of the strata that contain the canonical covers.
Here the bold letters on the left denote the tuples of the corresponding letters on the right, i.e.\ $\bd = (d_1, \dots, d_\kappa)$.
Let $\hat \mu_j = (\hat m_{j,1}, \dots, \hat m_{j,\hat n_j})$ and let $H_p := \{ (j,l) \mid \hat m_{j,l} < -1 \}$ be the set of marked non-simple poles in the cover.
Let $\lambda$ be a partition of $H_p$ with parts denoted by $\lambda^{(a)}$ and let $\lambda_\frakR$ be a subset of the parts of $\lambda$ such that $\lambda_\frakR$ is $\tau$-invariant as a set.
Let
\[
	\frakR := \left\{ r = (r_{j,l})_{(j,l) \in H_p} \in \bC^{|H_p|} \;\middle|\; \sum_{(j,l)\in \lambda^{(a)}} r_{j,l} = 0 \text{ for all } \lambda^{(a)} \in \lambda_\frakR \right\} \ .
\]
Denote by $\Omega \cM^\frakR_{\hat \bg, \hat \bn}(\hat \bmu)$ the subspace with residues in $\frakR$ and denote by $\Omega^k_\bd \cM^\frakR_{\bg,\bn}(\bmu)$ the corresponding subspace of $k$-differentials.
Note that this is well-defined, as we have chosen $\lambda_\frakR$ to be $\tau$-invariant.

\begin{mydef}
	We call a stratum of the form $\Omega^k_\bd \cM^\frakR_{\bg,\bn}(\bmu)$ a \emph{generalized stratum} and denote by $\PMSDR[k]{\bg}{\bn}{\bmu}$ the corresponding projectivized generalized stratum of multi-scale $k$-differentials.
\end{mydef}

Let $\hat G^+ \to G^+$ be a cover of enhanced level graphs.
Here and in the following we allow for $G^+$ to be disconnected.
Picking up an idea from \cite{CMZ20_diffstrata}, we construct the cover of \emph{auxiliary enhanced level graphs} $\hat G^+_\infty \to G^+_\infty$ in the following way.
For each $\lambda^{(a)} \in \lambda_\frakR$, we add a vertex $\hat v_{(a)}$ to $\hat G^+$ and think of these new vertices as being at level $\infty$, i.e.\ all new vertices are on a new level above all other levels.
As the legs of $\hat G^+$ correspond to the marked points of the stratum with orders $\hat m_{j,l}$, we may think of the legs of $\hat G^+$ as beeing indexed by the tuples $(j,l)$.
In particular the leg $(j,l)$ has $o$-value $\hat m_{j,l}$.
For each $(j,l) \in \lambda^{(a)}$, we add an edge to $\hat G^+$ connecting the leg $(j,l)$ to the new vertex $\hat v_{(a)}$ and we take the $o$-value at the upper half-edge of the new edge to be $-\hat m_{j,l} - 2k$.
If the sum of the $o$-values of the legs incident to $\hat v_{(a)}$ is odd we add an additional leg with $o$-value $1$ to $\hat v_{(a)}$.
Then the genus of $\hat v_{(a)}$ is determined by the $o$-values of the incident legs.
Finally, the action of $\tau$ on the new edges, vertices and legs is determined by the action of $\tau$ on the legs $(j,l)$.
We call this new graph $\hat G^+_\infty$ and we add edges, vertices and legs to $G^+$ to complete $G^+$ to the quotient $G^+_\infty := \hat G^+_\infty / \tau$.
We emphasis that the new vertices added to $G^+_\infty$ are never inconvenient, as they do not contain any poles.

We call the marked poles of $\Omega \cM_{\hat \bg, \hat \bn}(\hat \bmu)$ that are not contained in $\lambda_\frakR$ the \emph{free poles}.
When we assign residues to the graph $\hat G^+$ as in the proof of our main theorem, we may alter the residues at free poles at will (while maintaining the residue theorem at each component), as the GRC does not restrict the residues at components containing a free pole in any way, see Definition~\ref{def:GRC}.
We reflect this in the following definition.

\begin{mydef}
	A \emph{free pole path} is a simple path in $\hat G^+$ starting and ending in a (different) free pole.
	A \emph{generalized cycle} is a free pole path or a simple cycle.
\end{mydef}

The definitions of an effective (resp.\ admissible) cycle and of an independent pair of cycles can by adapted for generalized cycles in the obvious way.
By applying the methods of our proof to the enveloping stratum of the cover of auxiliary enhanced level graphs $\hat G^+_\infty \to G^+_\infty$, it is not hard to check that the proof of Theorem~\ref{intro:thm:main_theorem} in fact proves

\begin{mythm} \label{thm:realizability_generalized_strata}
	A normalized cover of enhanced level graphs $\pi \colon \hat G^+ \to G^+$ corresponds to a nonempty boundary stratum of a generalized stratum $\PMSDR[k]{\bg}{\bn}{\bmu}$ if and only if the following conditions hold.
	\begin{enumerate}[(i)]
		\item There is no illegal vertex in $\pi$.
		\item For every horizontal edge $\hat e$ in $\hat G^+$ there is an effective generalized cycle in $\hat G^+_\infty$ through $\hat e$.
		\item For every inconvenient vertex $v$ in $G^+$ there is an admissible generalized cycle in $\hat G^+_\infty$ through one of the preimages $\hat v$ or there is an independent pair of generalized cycles.
	\end{enumerate}
\end{mythm}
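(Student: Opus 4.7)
The plan is to reduce Theorem~\ref{thm:realizability_generalized_strata} to Theorem~\ref{thm:main_theorem} by working on the auxiliary cover $\hat G^+_\infty \to G^+_\infty$. Each added level-$\infty$ vertex $\hat v_{(a)}$ is engineered so that the GRC condition at the connected component of $\hat G^+_\infty$ above the original top level containing $\hat v_{(a)}$ reads precisely as $\sum_{(j,l)\in\lambda^{(a)}}\Res_{(j,l)}\omega=0$, which is the defining constraint of $\frakR$. Consequently, nonemptiness of the boundary stratum of $\PMSDR[k]{\bg}{\bn}{\bmu}$ attached to $\pi$ is equivalent to nonemptiness of the boundary stratum attached to $\pi_\infty$ in an enveloping standard moduli space of multi-scale $k$-differentials. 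Before invoking Theorem~\ref{thm:main_theorem}, I would first verify that $\hat G^+_\infty\to G^+_\infty$ is itself a normalized cover of enhanced level graphs: the matching-orders axiom, the genus formula (after adding the parity leg of $o$-value $1$ when required), and the extension of $\tau$ via the $\tau$-invariance of $\lambda_\frakR$ all hold by construction.

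The heart of the argument is to translate the three conditions of Theorem~\ref{thm:main_theorem} applied to $\pi_\infty$ into the conditions of Theorem~\ref{thm:realizability_generalized_strata} for $\pi$. On the auxiliary vertices, each formerly-leg half-edge now carries non-negative $o$-value (a pole in $\hat G^+$ of order $\geq 2$ becomes a zero from the perspective of $\hat v_{(a)}$), so no auxiliary vertex can trigger any of the pathological reduced types listed in Definitions~\ref{def:illegal_and_splitting_vertices} or \ref{def:inconvenient_vertex}. All newly added edges are vertical, since the auxiliary vertices sit on a newly-created top level. Thus conditions (i) and (iii) for $\pi_\infty$ reduce to the corresponding conditions for $\pi$, and horizontal edges of $\hat G^+_\infty$ coincide with those of $\hat G^+$. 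Simple cycles of $\hat G^+_\infty$ either lie entirely in $\hat G^+$ or traverse one or more auxiliary vertices; in the latter case, such a cycle restricts to a path in $\hat G^+$ between two constrained poles lying in a common part $\lambda^{(a)}\in\lambda_\frakR$. Free poles, however, remain untouched legs of $\hat G^+_\infty$, so no simple cycle of $\hat G^+_\infty$ can alter their residues; this is exactly why the notion of generalized cycle additionally admits free pole paths, along which the residue-shifting trick leaves the residue theorem intact at each internal vertex (by $\pm r$ cancellation) and incurs no constraint at the two free-pole endpoints.

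With the dictionary in place, sufficiency and necessity proceed exactly as in the proof of Theorem~\ref{thm:main_theorem}. For sufficiency, one initializes all residues to zero and assigns generic complex scalars $r_i$ along the $\tau$-orbits of the prescribed generalized cycles, verifying that the resulting configuration satisfies the residue theorem, MRC, GRC, $\tau$-compatibility, the $\frakR$-constraint, and --- by genericity --- realizability of every vertex. For necessity, one smooths the boundary point along a real ray in $\Delta^\ast$, uses Poincar\'e duality $\gamma_t=\PD(\omega_t)$ together with the identity $k\gamma_t=\sum_{i=0}^{k-1}\zeta^i\tau_*^i\gamma_t$, and expands $\gamma_t$ in a homology basis of $\hat X_t$ that tracks not only the cycles of the dual graph $\hat G$ but also the relative $1$-cycles pinching to free pole paths. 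The main subtlety --- and the chief point of novelty over Theorem~\ref{thm:main_theorem} --- is precisely this bookkeeping of relative cycles: one must check that the period integral along such a relative cycle converges to the difference of residues at its two free-pole endpoints and transforms compatibly under $\tau$, so that the analogue of equation~\eqref{eq:sum_of_residues} continues to hold when a $\gamma^\trop_j$ is allowed to be a free pole path.
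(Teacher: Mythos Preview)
Your proposal is correct and follows essentially the same approach as the paper, which gives only a one-line indication (``apply the methods of our proof to the enveloping stratum of the cover of auxiliary enhanced level graphs $\hat G^+_\infty \to G^+_\infty$''); you have in fact supplied considerably more detail than the paper does, including the verification that auxiliary vertices are neither illegal nor inconvenient and the dictionary between generalized cycles and simple cycles of $\hat G^+_\infty$. The one point you flag --- tracking relative $1$-cycles for free pole paths in the necessity argument --- is a genuine technical wrinkle that the paper itself leaves implicit, so your treatment is at least as complete.
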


\begin{myrem}
	For $k=1$, Theorem~\ref{thm:realizability_generalized_strata} recovers \cite[Proposition~3.2]{CMZ20_diffstrata} in the same way as Theorem~\ref{intro:thm:main_theorem} recovered \cite[Theorem~6.3]{MUW17_published}, see Remark~\ref{rem:MUW_recovered}.
\end{myrem}

	\printbibliography

\end{document}